\newtheorem{theo}{Theorem}[section]
\newtheorem{cor}[theo]{Corollary}
\newtheorem{lem}[theo]{Lemma}
\newtheorem{prop}[theo]{Proposition}
\newtheorem{defn}[theo]{Definition}
\newtheorem{rmk}[theo]{Remark}
\newtheorem{ex}[theo]{Example}
\newcommand{\Z}{\mathbb{Z}}
\newcommand{\N}{\mathbb{N}}
\newcommand{\NZ}{\mathbb{S}}
\newcommand{\s}{\sigma}
\newcommand{\LA}{L_\mathbf{\Lambda}}
\newcommand{\LAF}{L_\mathbf{\bar{\Lambda}}}
\newcommand{\LAP}{L_{\underline{\mathbf{\Lambda}}}}
\newcommand{\LAH}{L_\mathbf{\hat{\Lambda}}}
\newcommand{\F}{\mathcal{F}}
\newcommand{\h}{\mathcal{H}}
\newcommand{\Pp}{\mathcal{P}}
\newcommand{\FF}{\mathfrak{F}}
\newcommand{\Ppp}{\mathfrak{P}}
\newcommand{\G}{\mathfrak{G}}
\newcommand{\SgHF}{\bar{\hat{\Lambda}}}
\newcommand{\SgHFH}{\hat{\bar{\hat{\Lambda}}}}
\newcommand{\SgHFHF}{\bar{\hat{\bar{\hat{\Lambda}}}}}
\newcommand{\SgHFHFH}{\hat{\bar{\hat{\bar{\hat{\Lambda}}}}}}
\title{Inverse semigroup shifts over countable alphabets}
\author{
\small{Daniel Gon\c{c}alves}\\
\footnotesize{UFSC -- Department of Mathematics}\\
\footnotesize{88040-900 Florian\'{o}polis - SC, Brazil}\\
\footnotesize{\texttt{daemi@mtm.ufsc.br}}
\and
\small{Marcelo Sobottka}\\
\footnotesize{UFSC -- Department of Mathematics}\\
\footnotesize{88040-900 Florian\'{o}polis - SC, Brazil}\\
\footnotesize{\texttt{sobottka@mtm.ufsc.br}}
\and
\small{Charles Starling}\\
\footnotesize{University of Ottawa}\\
\footnotesize{Department of Mathematics and Statistics}\\
\footnotesize{Ottawa, ON, K1J 1J9}\\
\footnotesize{\texttt{cstar050@uottawa.ca}}
}
\date{}
\begin{document}

\maketitle

\begin{abstract}
In this work we characterize shift spaces over infinite countable alphabets that can be endowed with an inverse semigroup operation.
We give sufficient conditions under which zero-dimensional inverse semigroups can be recoded as shift spaces whose correspondent inverse semigroup operation is a 1-block operation, that is, it arises from a group operation on the alphabet. Motivated by this, we go on to study block operations on shift spaces and, in the end, we prove our main theorem, which states that Markovian shift spaces, which can be endowed with a 1-block inverse semigroup operation, are conjugate to the product of a full shift with a fractal shift.
\end{abstract}

\bigskip
\hrule
\noindent
{\footnotesize\em This is a pre-copy-editing, author-produced PDF of an article accepted for publication in Semigroup Forum, following peer review.}
\hrule
\bigskip

% ---------------------------- INTRODUCTION ------------------------------------

\section{Introduction}

In this work we are concerned with semigroup operations defined on shift spaces over infinite countable alphabets.

Let $A$ be a non-empty countable set (called an alphabet) with the product topology. We define the sets $A^\N$, and $A^\Z$, as the sets of all one-sided infinite sequences over elements of $A$, and all two-sided infinite sequences over elements of $A$, respectively. That is,
$$A^\N:=\{(x_i)_{i\in\N}:\ x_i\in A\ \forall i\in\N\}$$
and
$$A^\Z:=\{(x_i)_{i\in\Z}:\ x_i\in A\ \forall i\in\Z\},$$
where $\N$ denotes the set of the non-negative integers, and $\Z$ denotes the set of the integers. Whenever a definition, or result, works for both $\N$ and $\Z$ we will use the symbol $\NZ$ meaning ``either $\N$ or $\Z$''. For now we consider $A^\NZ$ with the product topology. On $A^\NZ$ the map $\s:A^\NZ\to A^\NZ$, which shifts every entry of a given sequence one to the left, is called the {\em shift map}. A {\em shift space} is any subset $\Lambda$ of $A^\NZ$ which is closed and invariant under $\s$ (that is, $\s(\Lambda)=\Lambda$). Note that if the alphabet $A$ is a finite group then $A^\NZ$ becomes a topological group when given the operation of entrywise multiplication. In the two-sided case, the shift map is an expansive group automorphism on the zero dimensional group $A^\Z$.

A major inspiration for what follows is the seminal paper by Kitchens \cite{kitchens}, which concerns the converse of the above.

\begin{theo}{\em \cite[Theorem 1]{kitchens}}\label{Kmaintheo}
Suppose $X$ is a compact, zero dimensional topological group, and that $T:X\to X$ is an expansive group automorphism. Then
\begin{enumerate}
\item $(X, T)$ is topologically conjugate to $(Y,\sigma)$ via a group isomorphism, where $Y\subset A^\Z$ is a one-step shift of finite type and $A$ is a finite group;
\item $(X,T)$ is topologically conjugate to $(F, \tau)\times (B^\Z, \sigma)$ via a group isomorphism, where $F$ and $B$ are finite groups, $\tau$ is a group automorphism, and the group operation on $F\times B^\Z$ is given as an extension of $B^\Z$ by $F$.
\end{enumerate}
\end{theo}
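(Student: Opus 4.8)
The plan is to exploit the profinite structure of $X$ together with expansiveness of $T$ to build an explicit coding homomorphism, and then to read off the dynamical decomposition directly from the group structure of the resulting group shift.

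\textbf{Tools and reduction.} A compact zero-dimensional group is profinite, so the open normal subgroups of $X$ form a neighbourhood basis at the identity $e$. Expansiveness of $T$ furnishes a neighbourhood $W$ of $e$ with $\bigcap_{n\in\Z}T^{-n}W=\{e\}$; by profiniteness I would shrink $W$ to an open (hence finite-index) normal subgroup $V\subseteq W$, obtaining $\bigcap_{n\in\Z}T^{n}V=\{e\}$, so that $A:=X/V$ is a finite group.

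\textbf{Part (1).} I would define $\phi:X\to A^{\Z}$ by $\phi(x)_n=(T^{n}x)V$. Since $V$ is normal, $\phi$ is a continuous group homomorphism into the product group $A^{\Z}$ satisfying $\phi\circ T=\sigma\circ\phi$, and its kernel is $\bigcap_{n}T^{-n}V=\{e\}$, so $\phi$ is injective. Being a continuous injection from a compact space into a Hausdorff one, it is a topological group isomorphism onto its image $Y:=\phi(X)$, a closed $\sigma$-invariant subgroup of $A^{\Z}$; this already yields $(X,T)\cong(Y,\sigma)$ via a group isomorphism. It then remains to see that a group shift is of finite type: the length-$n$ words of $Y$ form finite subgroups of $A^{n}$, and the descending chains of follower (resp.\ predecessor) subgroups must stabilise after finitely many steps by finiteness of the groups involved, so membership in $Y$ is governed by a bounded window. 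Passing to the higher block presentation, whose block map is itself a group homomorphism, would then recode $Y$ by a group isomorphism to a one-step shift of finite type over a finite group, giving (1).

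\textbf{Part (2) and the main obstacle.} Working with the one-step group SFT $Y$ and its transition subgroup $\mathbf{A}\le A\times A$ (the admissible $2$-blocks, a subgroup because $Y$ is a group), I would isolate the full-shift direction using the two coordinate projections $\pi_i:\mathbf{A}\to A$: the kernels $\ker\pi_1$ and $\ker\pi_2$ record the ``freely insertable'' symbols from the right and the left. The plan is to show that two-sided expansiveness forces these to match in size and to assemble into a single finite group $B$ together with a surjection $Y\twoheadrightarrow B^{\Z}$ of group shifts intertwining the shifts, whose kernel $F$ consists of the configurations with no free direction. On $F$ the automorphism $\sigma$ is expansive with trivial full-shift part, which should force $F$ to be finite; setting $\tau=\sigma|_{F}$ and choosing a continuous section of $Y\twoheadrightarrow B^{\Z}$ — available because this map has finite fibres over the zero-dimensional base $B^{\Z}$ — identifies the underlying space of $Y$ with $F\times B^{\Z}$ and transports the group law to the extension operation determined by $1\to F\to Y\to B^{\Z}\to 1$, yielding (2). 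The hard part will be exactly this step: proving that the left- and right-free subgroups have matching sizes and glue into a genuine full-shift quotient, and that the complementary kernel $F$ is finite. Both facts rely essentially on the forward \emph{and} backward contraction built into two-sided expansiveness, which is what prevents a persistent non-full-shift direction, and controlling the resulting extension so that the total space is literally $F\times B^{\Z}$ is where the real work lies.
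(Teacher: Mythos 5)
You should first note that the paper never proves this theorem: it is quoted from \cite[Theorem 1]{kitchens} as motivation, so the benchmark is Kitchens' argument, whose outline the authors themselves replay in Section \ref{final section} when they generalize part (ii) to infinite alphabets (Theorem \ref{lastsectionmaintheo}). Your part (i) is exactly that argument, correctly reproduced: profiniteness gives an open normal subgroup $V$ inside the expansive neighbourhood, $\phi(x)_n=(T^nx)V$ is a continuous injective group homomorphism intertwining $T$ and $\sigma$, compactness upgrades it to an isomorphism onto a group shift, and finiteness of $A$ stabilizes the descending chain of follower subgroups. Two points to tighten, both routine: only the follower sets of the identity words $1^n$ are subgroups (those of arbitrary words are cosets of them, the finite-alphabet analogue of Proposition \ref{FollowerSetProduct}), and stabilization of that chain still has to be converted into an $M$-step description, by the mechanism used in the proof of Theorem \ref{Conditions_M-step}(a).

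Part (ii) is where the genuine gap lies, and it is not merely the ``hard step'' you flag: the mechanism you propose is structurally insufficient. Kitchens' proof (and the paper's Section \ref{final section} version of it) is an \emph{induction}, not a one-shot quotient: one splits off the full shift $\Sigma_{\mathcal{H}}^{\mathbb{Z}}$ where $\mathcal{H}=\mathfrak{F}_1(Y,1)\cap\mathfrak{P}_1(Y,1)$, using a section chosen at the level of the \emph{alphabet} so that, applied coordinatewise, it is automatically shift-equivariant (Proposition \ref{phi_properties}); the complementary factor is another group shift, in general infinite, which must be recoded to a one-step group shift by the follower-set map $\theta$ (Proposition \ref{theta_properties}); and only the alternating iteration $\phi,\theta,\phi,\theta,\dots$, terminating because the alphabet strictly shrinks, produces the finite group $F$ with $\tau=\sigma|_F$. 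Your single round, which uses only the $2$-block data $\ker\pi_1\cong\mathfrak{F}_1(Y,1)$ and $\ker\pi_2\cong\mathfrak{P}_1(Y,1)$, cannot see the required quotient. Concretely, let $Y$ be the $3$-block presentation of the full shift over $\mathbb{Z}_2$ (alphabet $\mathbb{Z}_2^3$, transitions $(a,b,c)\to(b,c,d)$): here $\mathfrak{F}_1(Y,1)$ and $\mathfrak{P}_1(Y,1)$ both have order $2$ and intersect trivially, yet none of $\mathfrak{F}_1(Y,1)$, $\mathfrak{P}_1(Y,1)$, $\mathcal{H}$ is the kernel of the full-shift quotient; the correct kernel is $\{0\}\times\mathbb{Z}_2\times\mathbb{Z}_2$, the set of symbols occurring two places after the identity symbol, i.e.\ depth-$2$ follower data that your setup never computes and that the iteration reaches only after a second application of $\theta$. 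Moreover, your assertion that on $F$ ``$\sigma$ is expansive with trivial full-shift part, which should force $F$ to be finite'' is circular --- that implication \emph{is} part (ii), and proving it is exactly what the induction accomplishes. Finally, the continuous section you invoke only identifies underlying spaces; to obtain a topological conjugacy it must commute with the shift, which a generic finite-fibre section over $B^{\mathbb{Z}}$ will not --- this is precisely why Kitchens (and the paper) take the section on the alphabet and extend it coordinatewise.
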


A shift space in $A^\Z$ which carries a topological group operation which commutes with the shift is called a {\em topological group shift}, and the above shows that topological group shifts are modelled by group operations on the alphabet (after a possible re-coding). In \cite{sobottka2007}, similar results were obtained for topological quasigroup shifts over finite alphabets.

Here, we are interested in what one can say when the alphabet is countably infinite. In this case we will consider the compactification $\Sigma_A^\N$ of $A^\N$ proposed in \cite{Ott_et_Al2014}, which is such that the elements introduced when compactifying can be seen as finite words in $A$ together with an empty word $\O$. The two-sided analogue of this construction is the space $\Sigma_A^\Z$, which was considered in \cite{GoncalvesSobottkaStarling2015_2}.

If $A$ is a group, then producing a binary operation on $\Sigma_A^\N$ from the one on $A$ poses the immediate question of how to multiply sequences of different lengths. The natural thing to do is when multiplying two words $v$ and $w$ with the length of $v$ less than the length of $w$, that one would truncate $w$ to the length of $v$ and then multiply entrywise as normal. The result here of course is a word the same length as $w$. Under this operation, $\Sigma_A^\N$ is not a group -- indeed, $\O w = w\O = \O$ for every element $w\in \Sigma_A^\N$! It is however, an {\em inverse semigroup}, and it is for this reason that this paper concerns shift spaces with inverse semigroup operations on them -- ``inverse semigroup shifts'' -- and studies to what extent we can obtain results akin to Theorem \ref{Kmaintheo} in the infinite alphabet case (see Section \ref{sec: O-T-W shifts} for the details of the construction of $\Sigma_A^\NZ$).

After setting notation and background in Section 2, we prove a result similar to Theorem \ref{Kmaintheo}.i above for the one-sided case, Proposition \ref{section3maintheo}. To do this, we first define the notion of an {\em expansive partition}, and go on to show that if a dynamical system $(X,T)$ is an inverse semigroup and admits such a partition, then it is conjugate to an inverse semigroup shift. Section 4 is then dedicated to inverse semigroup shifts whose operation is induced from a group action on the alphabet. In Section \ref{sec_1-block operations} we see that a group operation over an infinite alphabet always induces a 1-block inverse semigroup operation on a full shift which uses that alphabet (but not necessarily a topological operation). Section \ref{algebraiccharacterization} is dedicated to showing that any inverse semigroup satisfying certain conditions is semigroup isomorphic to an inverse semigroup shift, while Section \ref{generalproperties1block} shows general properties of so-called 1-block operations. Here we show that, in contrast to the finite alphabet case, {\em follower sets} and {\em predecessor sets} may have different cardinalities. In Section \ref{final section}, we define a class of fractal shift spaces and then find an analogue to Theorem \ref{Kmaintheo}.ii, by showing that any two-sided Markovian 1-block inverse semigroup shift is isomorphic and topologically conjugate to the product of a fractal shift and a two-sided full shift, see Theorem \ref{lastsectionmaintheo}.

\section{Background}

In this section we will set notation and recall the background necessary for the paper. For detailed references on the topics covered in this section, see \cite{La98} for inverse semigroups, \cite{LindMarcus} and \cite{Ott_et_Al2014} for one sided shift spaces, \cite{GoncalvesSobottkaStarling2015_2} for two sided shift spaces over infinite alphabets, and \cite{GoncalvesSobottkaStarling2015} for sliding block codes between infinite alphabet shift spaces.

\subsection{Semigroups and dynamical systems}

Recall that a {\em semigroup} is a pair $(S,\cdot)$ where $S$ is a set and $\cdot$ is an associative binary operation. If $S$ is a topological space and $\cdot$ is continuous with respect to the topology of $S$, we say that $(S,\cdot)$ is a {\em topological semigroup}. An {\em inverse semigroup} is a semigroup $(S,\cdot)$ such that, for all $s\in S$, there exists a unique $s^*\in S$ such that $s\cdot s^*\cdot s = s$ and $s^*\cdot s \cdot s^* = s^*$.

We call an element $e\in S$ an {\em idempotent} if $e\cdot e = e$, and the set of all such elements will be denoted $E(S)$. It is true that, for all $s, t\in S$ and $e, f\in E(S)$ we have that $(s\cdot t)^* = t^*\cdot s^*$, $(s^*)^* =
s$, $e^* = e$, $e\cdot f = f\cdot e$ and $e\cdot f \in E(S)$. For all $s\in S$, the elements $s\cdot s^*$ and $s^*\cdot s$ are idempotents.

If $(S,\cdot)$ is an inverse semigroup with an identity (that is an element $1$ such that $1\cdot g=g\cdot 1=g$ for all $g\in S$) then it is called an {\em inverse monoid}. An element in $S$ is called a {\em zero} (and denoted by $0$) if $0\cdot g=g\cdot 0=0$ for all $g\in S$, and if $0,g, h \in S$, $g,h\neq 0$ and $g\cdot h=0$ then we say that $g$ and $h$ are {\em divisors of zero}. If they exist, the zero element and identity element are both seen to be unique.

Given an element $g\in S$, we have the Green's relations $\mathcal{L}$ and $\mathcal{R}$: $g\mathcal{L}h$ if and only if $g^*\cdot g=h^*\cdot h$; $g\mathcal{R}h$ if and only if $g\cdot g^*=h\cdot h^*$. Given $a\in S$ its {\em $\mathcal{L}$-class} is denoted $L(a):=\{b\in S:\
b^*\cdot b=a^*\cdot a\}$, and its {\em $\mathcal{R}$-class} is denoted $R(a):=\{b\in S:\ b\cdot b^*=a\cdot a^*\}$.

If $(S,\cdot)$ and $(\tilde S,\tilde \cdot)$ are two inverse semigroups, then a map $\xi:S\to \tilde S$ is said to be a {\em semigroup homomorphism} if for all $a,b\in S$ we have that $\xi(a\cdot b)=\xi(a)\tilde\cdot\xi(b)$. For such a $\xi$, for all $e\in E(S)$, we have that $\xi(e)\in E(\tilde S)$ and $\xi(a^*)=\xi(a)^*$ for all $a\in S$. If a semigroup homomorphism $\xi$ is bijective then we say that it is an {\em isomorphism}.

 Any inverse semigroup $(S,\cdot)$ carries a natural partial order $\leq$ given by saying that $a\leq b$ if and only if there exists some $e\in E(S)$ such that $a = e\cdot b$.

A {\em dynamical system (DS)} is a pair $(X,T)$, where $X$ is a locally compact space and $T:X\to X$ is a map, and if in addition $T$ is continuous then we will say that $(X,T)$ is a {\em topological dynamical system (TDS)}. Two dynamical systems $(X,T)$ and $(Y,S)$ are said to be {\em conjugate} if there exists a bijective map $\Phi:X\to Y$ such that $\Phi\circ T=S\circ \Phi$. If, in addition, $(X,T)$ and $(Y,S)$ are topological dynamical systems and $\Phi$ is a homeomorphism we say
that $(X,T)$ and $(Y,S)$ are {\em topologically conjugate} and the map $\Phi$ is called a {\em (topological) conjugacy} for the dynamical systems. %Here, by Definition \ref{shift_conjugate}, we will say that two shifts are
%(topologically) conjugate if there is a (continuous) bijective shift-commuting map between them.

 We say that a dynamical system $(X,T)$ is {\em expansive} if there exists a partition $\mathcal{U}$ of $X$ such that, for all $x\neq y$ in $X$, there exists $n \in\NZ$ such that $T^n(x)$ and $T^n(y)$ are in different elements of $\mathcal{U}$ (where $\NZ=\N$ if $T$ is not invertible, and $\NZ=\Z$ otherwise). In such a case, $\mathcal{U}$ is called an {\em expansive partition for $T$}.

Lastly, recall that a locally compact Hausdorff space is said to be {\em zero dimensional} or {\em totally disconnected} if it has a basis consisting of clopen sets.

\subsection{Compactified shift spaces over countable alphabets}\label{sec: O-T-W shifts}

Let $A$ be a countable set with $|A|\geq 2$, which we will call an {\em alphabet} and whose elements will be called {\em symbols} or {\em letters}. Define a new symbol $\o\notin A$, which we will call the {\em empty letter} and define the extended alphabet $\tilde{A}:=A\cup\{\o\}$. We will consider the set
of all infinite sequences over $\tilde{A}$, $\tilde{A}^\NZ:=\{(x_i)_{i\in\N}: x_i\in \tilde{A}\ \forall i\in\NZ\}$
and define $\Sigma_A^{\NZ\ \text{inf}},\Sigma_A^{\NZ\ \text{fin}}$ by
$$\Sigma_A^{\NZ\ \text{inf}}:=A^\NZ:=\{(x_i)_{i\in\NZ}:\ x_i\in A\ \text{ for all } i\in \NZ\}$$ and $$\Sigma_A^{\NZ\ \text{fin}}:=\{(x_i)_{i\in\NZ}:\ x_i \in \tilde{A}, x_j = \o\text{ for some }j, \text{ and if } x_i=\o\ \text{then} \ x_{i+1}=\o, \forall i\in\NZ\}.$$
We will denote by $\O$ the constant sequence whose entries are all the empty letter $\o$, that is, $\O=(x_i)_{i\in\NZ}$ where $x_i=\o$ for all $i\in\NZ$.

\begin{defn}[One-sided full shift] The {\em one-sided full shift} over $A$ is the set $$\Sigma_A^\N:=\left\{\begin{array}{lcl}\Sigma_A^{\N\ \text{inf}} & \text{, if} & |A|<\infty\\\\ \Sigma_A^{\N\ \text{inf}}\cup\Sigma_A^{\N\ \text{fin}} & \text{, if} &
|A|=\infty\end{array}\right..$$

\end{defn}

\begin{defn}[Two-sided full shift] The {\em two-sided full shift} over $A$ is the set $$\Sigma_A^\Z:=\left\{\begin{array}{lcl}\Sigma_A^{\Z\ \text{inf}}\cup\{\O\} & \text{, if} & |A|<\infty\\\\ \Sigma_A^{\Z\ \text{inf}}\cup\Sigma_A^{\Z\ \text{fin}} & \text{, if} &
|A|=\infty\end{array}\right..$$

\end{defn}

\begin{defn} For $x\in\Sigma_A^\NZ$, we define  $$l(x):=\left\{\begin{array}{lcl}+\infty&,\ if& x\in \Sigma_A^{\NZ\ \text{inf}}\\\\
                                                                                            \displaystyle \max_{k\in\NZ }\{k: x_k\neq\o\} &,\ if& x\in \Sigma_A^{\NZ\ \text{fin}}\setminus\{\O\}\\\\
                                                                                            -\infty &,\ if& x=\O
                                                                                            \end{array}\right.$$
and call $l(x)$ the {\em length} of $x$.
\end{defn}

We will refer to sequences in $\Sigma_A^{\NZ\ \text{fin}}$ as {\em finite sequences}, sequences in $\Sigma_A^{\NZ\ \text{inf}}$ as {\em infinite sequences} and we will refer to $\O$ as the {\em empty sequence}. To simplify the notation, giving a sequence $(x_i)_{i\in\NZ}\in\Sigma_A^{\NZ\ \text{fin}}\setminus\{\O\}$ we will frequently omit the entries with the empty letter when denoting it, that is, we will denote $(x_i)_{i\leq k}=(\ldots x_1x_2x_3\ldots x_k)$, where $k=l(x)$.

Given $(x_i)_{i\leq k}\in\Sigma_A^{\NZ\ \text{fin}}\setminus\{\O\}$ and a finite set $F\subset A$, we define
\begin{equation}\label{cylinder1}Z(x,F):=\{y\in\Sigma_A^\NZ:\ y_i=x_i\ \forall i\leq k,\ y_{k+1}\notin F\}.\end{equation}
In the special case that $F=\emptyset$, then we denote $Z(x,F)$ simply by $Z(x)$. Furthermore, given $x^1,x^2,\ldots,x^m\in\Sigma_A^{\NZ\ \text{fin}}\setminus\{\O\}$
we define
\begin{equation}\label{cylinder2}Z^c(x^1,\ldots,x^m)= \Big(\bigcup_{j=1}^m Z(x^j)\Big)^c . \end{equation}
In the case $\NZ=\N$, given $F=\{x^1,\ldots,x^m\}$, where each $x^i$ is a finite word of length 1, we define $Z(\O,F)=Z^c(x^1,\ldots,x^m)$.

The sets of the form \eqref{cylinder1} and \eqref{cylinder2} are called {\em generalized cylinders} and they form the basis of the topology that we consider in $\Sigma_A^\NZ$. For this topology we have that $\Sigma_A^\NZ$ is zero dimensional (generalized cylinders are clopen sets), Hausdorff and compact. Furthermore, when $\NZ=\N$ the topology is metrizable (see \cite{Ott_et_Al2014}, Section 2), while for $\NZ=\Z$ the topology is not first countable (there is no countable neighborhood basis for $\O$ - see \cite[Proposition 2.7]{GoncalvesSobottkaStarling2015_2}). We refer the reader to \cite{GoncalvesSobottkaStarling2015_2} and \cite{Ott_et_Al2014} for more details about this topology.

Note that if $\NZ=\N$ then $Z(x)\cap \Sigma_A^\N$ coincides with an
usual cylinder of the product topology in $\Sigma_A^{\N\ \text{inf}}$. In particular if $\NZ=\N$, and we allow $A$ to be finite, then $\Sigma_A^\N$ with the topology generated by the generalized cylinders is a classical shift space over $A$ (see \cite[Remark 2.24]{Ott_et_Al2014}).

As mentioned in the introduction, the shift map $\s:\Sigma_A^\NZ\to\Sigma_A^\NZ$ is defined as the map given by $\s((x_i)_{i\in\NZ})=(x_{i+1})_{i\in\NZ}$ for all $(x_i)_{i\in\NZ}\in\Sigma_A^\NZ$. Note that
$\s(\O)=\O$ and, for all $x\in\Sigma_A^{\NZ\ \text{fin}}$, $x\neq \O$, we have that $l\big(\s(x)\big)=l(x)-1$. Furthermore, when $A$ is infinite and $\NZ=\N$ we have that $\s:\Sigma_A^\N\to\Sigma_A^\N$ is continuous everywhere but at $\O$. In any other situation (either under $\NZ=\N$ and finite $A$, or under $\NZ=\Z$ and countable $A$) we have that $\s:\Sigma_A^\NZ\to\Sigma_A^\NZ$ is a continuous map (see \cite[Proposition 2.12]{GoncalvesSobottkaStarling2015_2}).

Given $\Lambda\subseteq\Sigma_A^\NZ$, let $\Lambda^{\text{fin}}:=\Lambda\cap\Sigma_A^{\text{fin}}$ and $\Lambda^{\text{inf}}:=\Lambda\cap\Sigma_A^{\text{inf}}$ be
the set of finite sequences and the set of infinite sequences in $\Lambda$, respectively.
We consider on $\Lambda$ the topology induced from $\Sigma_A^\NZ$, that is, the topology whose basis are the sets of the form  $Z_\Lambda(x,F):=Z(x,F)\cap\Lambda$ and $Z_\Lambda(x^1,...,x^m):=Z(x^1,...,x^m)\cap\Lambda$.

For each $n\geq 1$, let
$$B_n(\Lambda):=\{(a_1\ldots a_n):\ \text{there exists } x\in\Lambda,\ i\in\NZ, \text{ such that } x_{i+j-1}=a_j\text{ for all } j=1,\ldots,n\}$$
be the set of all {\em blocks of length $n$} in $\Lambda$.

 Furthermore, define $$B_{\text{linf}}(\Lambda):=\left\{\begin{array}{ll}\emptyset &\text{ if } \Lambda\subset\Sigma_A^\N\\\\
                                                                          \left\{(x_i)_{i\leq k}:\  x\in \Lambda,\  k\in\Z\right\}&\text{ if } \Lambda\subset\Sigma_A^\Z.\end{array}\right.$$
The {\em language} of $\Lambda$ is $$B(\Lambda):=B_{\text{linf}}(\Lambda)\cup\bigcup_{n\geq 1}B_n(\Lambda).$$

Also, define the {\em letters} of $\Lambda$ to be $\LA:= B_1(\Lambda)\setminus\{\o\}$ -- this is the set of elements of $A$ used in sequences of $\Lambda$.

Given $a\in B(\Sigma_A^\NZ)$, the {\em $k^{th}$ follower set} of $a$ in $\Lambda$ is the set \begin{equation}\label{followersets}\F_k(\Lambda,a):=\{b\in B_k(\Lambda):\ ab\in B(\Lambda)\},\end{equation}while the {\em $k^{th}$ predecessor set} of $a$ in $\Lambda$ is\begin{equation}\label{predecessorsets}\Pp_k(\Lambda,a):=\{b\in B_k(\Lambda):\ ba\in B(\Lambda)\}.\end{equation}\\

We say that a subset $\Lambda$ satisfies the {\em infinite extension property} if, for all $x\in\Sigma_A^{\NZ\ \text{fin}}\setminus\{\O\}$,  $$x\in\Lambda^{\text{fin}}\qquad\Longleftrightarrow\qquad\Big|\F_1\big(\Lambda,(x_i)_{i\leq\l(x)}\big)\Big|=\infty.$$
Notice that when $\NZ=\N$, $\O\in\Lambda^{\text{fin}}$ if, and only if, $|A|=\infty$, while if $\NZ=\Z$ then we have that $\O\in\Lambda^{\text{fin}}$ if, and only if, $|\Lambda|=\infty$.

\begin{defn}\label{2-sided_shifts} A set $\Lambda\subseteq\Sigma_A^\NZ$ is said to be a {\em shift space over $A$} if the following three properties hold:
\begin{enumerate}
\item[1.] $\Lambda$ is closed with respect to the topology of $\Sigma^\NZ_A$;
\item[2.]$\Lambda$ is invariant under the shift map, that is, $\s(\Lambda) = \Lambda$;
\item[3.] $\Lambda$ satisfies the infinite extension property.
\end{enumerate}
\end{defn}

We notice that condition 2 could be relaxed when $\NZ=\N$ to $\s(\Lambda) \subset \Lambda$. However in this work we will just consider, even when $\NZ=\N$, shift spaces such that $\s(\Lambda) = \Lambda$. We remark that $\Lambda$ satisfies the infinite extension property if and only if $\Lambda^{\text{inf}}$ is dense in $\Lambda$ (see \cite[Proposition 3.8]{Ott_et_Al2014} and \cite[Lemma 2.18]{GoncalvesSobottkaStarling2015_2}).

 Given $X\subset\Sigma_A^{\NZ\ \text{inf}}$ such that $\s(X)=X$ we can define the shift space generated by $X$ as the smallest shift space $\Lambda$ such that $\Lambda^{\text{inf}}\supset X$. It follows that $\Lambda:=cl(X)$, where $cl(X)$ stands for the closure of $X$.

Shift spaces can be characterized in terms of forbidden words:

\begin{defn} Let $\mathbf{F}$ be a subset of $B(\Sigma_A^\NZ)$ whose elements do not use the empty letter. We define $X_{\mathbf{F}}$ as the subset of $\Sigma_A^\NZ$ such that: (i) its infinite sequences are exactly those that do not contain a block belonging to $\mathbf{F}$; (ii) its finite sequences satisfy the `infinite extension property'; (iii) $\s(X_{\mathbf{F}})=X_{\mathbf{F}}$ (where this last assumption is only necessary to assure that, when $\NZ=\N$, condition 2 of Definition \ref{2-sided_shifts} is satisfied).
\end{defn}

From \cite[Theorem 3.16]{Ott_et_Al2014} and \cite[Proposition 2.25]{GoncalvesSobottkaStarling2015_2} we have that $\Lambda$ is a shift space if, and only if, $\Lambda= X_{\mathbf{F}}$ for some $\mathbf{F}\subset B(\Sigma_{\LA}^\NZ)$ whose elements do not use the empty letter.

\begin{defn} Given a shift space $\Lambda\subset\Sigma_A^\Z$, the projection of $\Lambda$ onto the
non-negative coordinates is the map $\pi:\Lambda\to\Sigma_A^\N$ given by $\pi\big((x_i)_{i\in\Z}\big)= (x_i)_{i\in\N}$.
\end{defn}

The projection $\pi$ establishes a relationship between two-sided shift spaces and one-sided shift spaces. We refer the reader to sections 2 and 4 of \cite{GoncalvesSobottkaStarling2015_2} for more details about it.

We now summarize some important types of shift spaces. If $\Gamma$ is a shift space, then we say that $\Gamma$ is a:

\begin{itemize}

\item \uline{{\sc shift of finite type (SFT)}}: if $\Lambda=X_\mathbf{F}$ for some finite $\mathbf{F}\subset B(\Sigma_{\LA}^\NZ)$;

\item \uline{{\sc finite-step shift}}: if $\Lambda=X_\mathbf{F}$ for some $\mathbf{F}\subset B(\Lambda)\setminus B_{\text{linf}}(\Lambda)$. In the case that $\mathbf{F}\subseteq B_{M+1}(\Lambda)$, we will say that $\Lambda$ is an $M$-step shift ($1$-step shifts will also be referred as {\em Markov} (or {\em Markovian}) shifts);

\item \uline{{\sc infinite-step shift}}: if it is not a finite-step shift. Note that just two-sided shift spaces can contain proper infinite-step shifts;

\item \uline{{\sc edge shift}}: if $\Lambda(G):=X_\mathbf{F}$, where $\mathbf{F}:=\{ef:\ t(e)\neq i(f)\}$ and $G=(E,V,i,t)$ is a directed graph with no sources and sinks.

\item \uline{{\sc row-finite shift}}: if for all $a\in\LA$ we have that $\F_1(\Lambda,a)$ is a finite set;

\item \uline{{\sc column-finite shift}}: if for all $a\in\LA$ we have that $\Pp_1(\Lambda,a)$ is a finite set.
\end{itemize}

\subsection{Sliding block codes}\label{Sliding block codes}

Roughly speaking, a slinding block code is a code which encodes a sequence $(x_i)_{i\in\NZ}$ of a shift space as a new sequence (possibly in other shift space) in a continuous way and which is invariant by translations.

A {\em pseudo cylinder} of a shift space $\Lambda\subset\Sigma_A^\NZ$ is a set of the form
$$[b]_{k}^\ell:=\{(x_i)_{i\in\Z}\in \Lambda: (x_{k}\ldots x_\ell)=b\},$$
where $k\leq\ell\in\NZ$ and $b=(b_1\ldots b_{-k+\ell+1})\in B(\Sigma_A^\NZ)$.
We say that the pseudo cylinder $[b]_{k}^\ell$ has memory  $K:=-\min \{0,k\}$ and anticipation $L:=\max\{0,\ell\}$. Note that if $\Lambda$ is a one-sided shift space, then any pseudo-cylinder of $\Lambda$ has memory equal to zero. We adopt the convention that the empty set is a pseudo cylinder of $\Lambda$ whose memory and anticipation are zero.

Given $C\subset \Lambda\subset \Sigma_A^\NZ$, we will say that $C$ is {\em finitely defined in $\Lambda$} if there exist two collections of pseudo cylinders of $\Lambda$, namely $\{[b^i]_{k_i}^{\ell_i}\}_{i\in I}$ and $\{[d^j]_{m_j}^{n_j}\}_{j\in J}$,  such that
\begin{equation*}
C=\bigcup_{i\in I} [b^i]_{k_i}^{\ell_i}\qquad \text{and}\qquad
C^c= \bigcup_{j\in J}[d^j]_{m_j}^{n_j}.
\end{equation*}
In this case we say that $C$ has memory  $K:=-\inf_{i\in I} \{0,\ k_i\}$ and anticipation $L:=\sup_{i\in I}\{0,\ \ell_i\}$.

\begin{defn}\label{defn_sliding block code}
Let $A$ and $B$ be two countably infinite alphabets, and let $\Lambda \subset \Sigma_A^\NZ$ and $\Gamma \subset \Sigma_B^\NZ$ be two shift spaces. Suppose $\{C_a\}_{a\in \tilde{B}}$ is a pairwise
disjoint partition of $\Lambda$, such that:
\begin{description}\addtolength{\itemsep}{-0.5\baselineskip}

\item[\em 1.] for each $a\in \tilde{B}$ the set $C_a$ is finitely defined in $\Lambda$;

\item[\em 2.] $C_{\o}$ is shift invariant (that is, $\s(C_{\o})\subset C_{\o}$).

\end{description}

A map $\Phi:\Lambda\to\Gamma$ is called a {\em sliding block code} if

\begin{equation*}\label{LR_block_code}\bigl(\Phi(x)\bigr)_n=\sum_{a\in \tilde{B}}a\mathbf{1}_{C_a}\circ\sigma^{n}(x),\quad \forall x\in\Lambda,\ \forall n\in\NZ, \end{equation*} where $\mathbf{1}_{C_a}$ is the
characteristic function of the set $C_a$ and $\sum$ stands for the symbolic sum.

 In this case, we say that $\Phi$ has memory $K:=\sup_{a\in \tilde{B}} \{k_a\}$ and anticipation $L:=\sup_{a\in \tilde{B}} \{\ell_a\}$, where $k_a$ and $\ell_a$ stand for the memory and the anticipation of each $C_a$, respectively,
 If $K,L<\infty$ we will say that $\Phi$ is a $K+L+1$-block, while if $K=\infty$ or $L=\infty$, we will say that $\Phi$ has unbounded memory or anticipation, respectively.
\end{defn}

Intuitively, $\Phi:\Lambda\subset\Sigma_A^\NZ\to \Gamma\subset\Sigma_B^\NZ$ is a sliding block code if it has a local rule, that is, if for all $n\in\Z$ and $x\in\Lambda$, there exist $k,\ell\geq 0$ which depends on the configuration of $x$ around $x_n$, such that we only need to know $(x_{n-k}x_{n-k+1} \ldots x_{n+\ell})$ to determine $\bigl(\Phi(x)\bigr)_n$. In the particular case that $\Phi$ is a sliding block code with memory $K$ and anticipation $L$, then its local rule can be written as a function $\alpha:B_{K+L+1}(\Lambda)\to\tilde{B}$ such that for all $x\in\Lambda$ and $n\in\NZ$, it follows that $\bigl(\Phi(x)\bigr)_n=\alpha(x_{n-K}x_{n-K+1} \ldots x_{n+L})$.

We refer the reader to \cite{GoncalvesSobottkaStarling2015},  \cite{GoncalvesSobottkaStarling2015_2} and \cite{SG} for more details about sliding block codes between shift spaces over infinitely countable alphabets.

\subsection{Shift spaces as dynamical systems and semigroups}

If $\Lambda\subset\Sigma_A^\NZ$ is a shift space, then $(\Lambda,\s)$ is a dynamical system. If $\NZ=\N$, \cite{GoncalvesSobottkaStarling2015} ensures that $(\Lambda,\s)$ is a topological dynamical system if, and only if, $\Lambda$ is a column-finite shift. On the other hand, if $\NZ=\Z$, then \cite{GoncalvesSobottkaStarling2015_2} ensures that $(\Lambda,\s)$ is always a topological dynamical system.

\begin{defn}\label{shift_conjugate}
Two shift spaces $\Lambda\subseteq\Sigma^\NZ_A$ and $\Gamma\subseteq\Sigma^\NZ_B$ are said to be (topologically) conjugate if the dynamical systems $(\Lambda,\s)$ and $(\Gamma,\s)$ are (topologically) conjugate.
\end{defn}

We remark that a sliding block code is always a conjugacy between shift spaces (see \cite[Proposition 3.12]{GoncalvesSobottkaStarling2015} and \cite[Proposition 3.10]{GoncalvesSobottkaStarling2015_2}). Furthermore, in \cite{GoncalvesSobottkaStarling2015} and \cite{GoncalvesSobottkaStarling2015_2} conditions are given for a sliding block code to be a topological conjugacy.

We now seek to incorporate a semigroup operation to the shift space structures defined so far.

\begin{defn} We will say that a shift space $\Lambda\subseteq\Sigma_A^\NZ$ is a {\em semigroup shift} if there exists a binary operation $\bullet$ defined on $\Lambda$ such that $(\Lambda,\bullet)$ is a semigroup and the
shift map is a semigroup homomorphism. If, in addition, the operation $\bullet$ is continuous, then we will say that $(\Lambda,\bullet)$ is a {\em topological semigroup shift}.
\end{defn}

A particular type of semigroup shifts that we will study are the {\em $k$-block semigroup shifts}.

\begin{defn}\label{defn_block_oper} Let $A$ be an alphabet, $\Lambda \subset \Sigma_A^\NZ$ be a subshift, and $\bullet$ be a binary operation on $\Lambda$. Suppose that $\{C_a\}_{a\in \tilde{A}}$ a pairwise disjoint partition of $\Lambda\times\Lambda$, such that:
\begin{description}\addtolength{\itemsep}{-0.5\baselineskip}

\item[\em 1.] for each $a\in \tilde{A}$ the set $C_a$ is finitely defined;

\item[\em 2.] $C_{\o}$ is shift invariant.

\end{description}

We say that $\bullet$ is a {\em block operation} if

\begin{equation}\label{eq_block_oper_1}\bigl(x\bullet y\bigr)_n=\sum_{a\in \tilde{A}}a\mathbf{1}_{C_a}\circ\big(\sigma^n(x),\sigma^n(y)\big),\quad \forall x,y\in\Lambda,\ \forall n\in\NZ,\end{equation} where
$\mathbf{1}_{C_a}$ is the characteristic function of the set $C_a$, and $\sum$ stands for the symbolic sum.

Denoting by $k_a$ and $\ell_a$ the memory and the anticipation of each $C_a$, respectively, let $K:=\inf_{a\in \tilde{A}}k_a$ and  $L:=\sup_{a\in \tilde{A}}\ell_a$. If $K$ and $L$ are finite, we will say that $\bullet$ is a
$K+L+1$-block operation with memory $K$ and anticipation $L$ (in the case $\NZ=\N$ we always have $K=0$).
\end{defn}

Once again it is best to think of a block operation as an operation with a local rule. In the particular case of $\bullet$ being a block operation with memory $K$ and anticipation $L$, its local rule can be written as $\alpha: B_{K+L+1}(\Lambda\times\Lambda)\to \tilde{A}$, such that given $x, y\in\Lambda$ and $n\in\NZ$,
\begin{equation*}\label{eq_block_oper_2}
(x\bullet y)_n = \alpha(x_{n-K}\cdots x_{n+L}\ ,\ y_{n-K}\cdots y_{n+L}).
\end{equation*}

One notices that the above definition of a block operation is similar to the definition of sliding block codes (see \cite{GoncalvesSobottkaStarling2015,GoncalvesSobottkaStarling2015_2, SG}). However, since $\Lambda\times\Lambda$ is not a shift space, the map $(x,y)\mapsto x\bullet y$ is not a sliding block code and we cannot use the results established for them. At any rate, block operations share several properties with sliding block codes, and, for the particular case of 1-block semigroups (see Subsection \ref{sec_1-block operations}), such an operation will be a sliding block code defined from the shift space $\Lambda\boxtimes\Lambda$ to $\Lambda$ (see Section \ref{Product of Shift Spaces} for the definition of $\Lambda\boxtimes\Lambda$).

%==========================================================================
\subsection{The Product of Shift Spaces}\label{Product of Shift Spaces}

%======================================================================

Given two shift spaces $\Lambda\subseteq\Sigma_A^\NZ$ and $\Gamma\subseteq\Sigma_B^\NZ$, the Cartesian product $\Lambda\times\Gamma$ is not a shift space on the alphabet $A\times B$ (if either $A$ or $B$ is infinite). In
light of this, we define product of shift spaces as follows:

\begin{defn} Given two shift spaces $\Lambda\subseteq\Sigma_A^\NZ$ and $\Gamma\subseteq\Sigma_B^\NZ$, the product of $\Lambda$ and $\Gamma$ is the shift space $\Lambda\boxtimes\Gamma\subseteq \Sigma_{A\times B}^\NZ$
generated by $\Lambda^{\text{inf}}\times\Gamma^{\text{inf}}$, that is, $\Lambda\boxtimes\Gamma:=\overline{\Lambda^{\text{inf}}\times\Gamma^{\text{inf}}}$, where we are identifying an element $(a_i)_{i\in \NZ} \times (b_i)_{i \in \NZ}  \in \Lambda^{\text{inf}}\times\Gamma^{\text{inf}}$ with $\left( (a_i,b_i) \right)_{i \in \NZ} \in \Sigma_{A\times B}^\NZ$ .
\end{defn}

We remark that $\Lambda\boxtimes\Gamma$ is the smallest shift space whose set of infinite sequences is $\Lambda^{\text{inf}}\times\Gamma^{\text{inf}}$. Alternatively, we can consider the map
$\wp:\Lambda\times\Gamma\to\Sigma_{A\times B}^\NZ$, defined, for all $(a,b)\in\Lambda\times\Gamma$ and $i\in\NZ$, by:
\begin{equation}\label{map_wp}
\big(\wp(a,b)\big)_i:=\left\{
\begin{array}{lcl}
\big(a_i,b_i) &\text{if}& a_i,b_i\neq\o\\\\
\o &\text{if}& a_i=\o\text{ or }b_i=\o
\end{array}\right.
\end{equation}
and it follows that $\Lambda\boxtimes\Gamma=\wp(\Lambda\times\Gamma)$, $(\Lambda\boxtimes\Gamma)^{\text{inf}}=\wp(\Lambda^{\text{inf}}\times\Gamma^{\text{inf}})$, and $(\Lambda\boxtimes\Gamma)^{\text{fin}}=\wp(\Lambda\times\Gamma\setminus\Lambda^{\text{inf}}\times\Gamma^{\text{inf}})$.

\begin{prop}\label{wp_continuity}
Let $\Lambda\subset\Sigma_A^\NZ$ and $\Gamma\subset\Sigma_B^\NZ$ be  two shift spaces, consider $\Lambda\times\Gamma$ with the product topology and $\Lambda\boxtimes\Gamma$ with the topology generated by generalized cylinders. Then the map
$\wp:\Lambda\times\Gamma\to\Lambda\boxtimes\Gamma$ defined by \eqref{map_wp} is continuous.

\end{prop}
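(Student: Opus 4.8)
The plan is to establish continuity by showing that the $\wp$-preimage of every member of a basis for the topology of $\Lambda\boxtimes\Gamma$ is open in $\Lambda\times\Gamma$; since $\wp$ takes values in $\Lambda\boxtimes\Gamma$, I may work directly with the generalized cylinders $Z(w,F)$ and $Z^c(w^1,\ldots,w^m)$ of \eqref{cylinder1} and \eqref{cylinder2} (intersected with $\Lambda\boxtimes\Gamma$) that generate the topology there. The key simplification is that taking preimages commutes with unions, intersections and complements, and that in the ambient space $\Sigma_{A\times B}^\NZ$ one has the identities $Z(w,F)=Z(w)\setminus\bigcup_{g\in F}Z(wg)$ and $Z^c(w^1,\ldots,w^m)=\big(\bigcup_{j}Z(w^j)\big)^c$, where $wg$ denotes the word $w$ with the letter $g\in A\times B$ appended in the next coordinate. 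Thus the whole problem reduces to computing $\wp^{-1}(Z(w))$ for a single cylinder word $w$ and checking that it is clopen.

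For this, let $w=w_1\cdots w_k$ be a finite word over $A\times B$ of length $k$. Since $w$ is a finite sequence of length $k$, each $w_i$ with $i\le k$ is a genuine letter of $A\times B$, so I may write $w_i=(c_i,d_i)$ with $c_i\in A$ and $d_i\in B$. Recalling from \eqref{map_wp} that $\big(\wp(a,b)\big)_i=(a_i,b_i)$ precisely when $a_i,b_i\neq\o$ and equals $\o$ otherwise, the condition $\wp(a,b)\in Z(w)$ is equivalent to $a_i=c_i$ and $b_i=d_i$ for all $i\le k$ (the equalities $a_i=c_i\in A$ and $b_i=d_i\in B$ already force $a_i,b_i\neq\o$). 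Hence
\[
\wp^{-1}\big(Z(w)\big)=Z_\Lambda(c)\times Z_\Gamma(d),
\]
where $c:=(c_i)_{i\le k}$ and $d:=(d_i)_{i\le k}$ are the corresponding finite words over $A$ and $B$. As a product of two generalized cylinders, this set is clopen in $\Lambda\times\Gamma$.

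Combining this with the Boolean reductions above and noting $\wp^{-1}(Z(wg))=Z_\Lambda(ce)\times Z_\Gamma(df)$ for $g=(e,f)$, the preimages $\wp^{-1}(Z(w,F))$ and $\wp^{-1}(Z^c(w^1,\ldots,w^m))$ are finite Boolean combinations of clopen product cylinders, hence themselves clopen, and in particular open. Continuity of $\wp$ follows, and the argument is uniform in $\NZ\in\{\N,\Z\}$ (in the two-sided case $c$ and $d$ are read as the left-infinite finite sequences obtained by fixing all coordinates up to $k$, and the special one-sided cylinder $Z(\O,F)$ is subsumed under the complement family). I expect no real obstacle here: the only subtlety is that $\wp$ collapses coordinates to $\o$ and so is highly non-injective, but this does not disturb continuity, because the excluded letters in $F\subseteq A\times B$ never include $\o$, so every coordinate that $\wp$ sends to $\o$ is correctly retained inside $Z(w,F)$ by the identity $Z(w,F)=Z(w)\setminus\bigcup_{g\in F}Z(wg)$.
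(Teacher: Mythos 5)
Your proof is correct and takes essentially the same route as the paper's: both arguments come down to the observation that the preimage of a plain cylinder, $\wp^{-1}\big(Z(w)\big)$, is a product $Z_\Lambda(c)\times Z_\Gamma(d)$ of cylinders, hence clopen, with the excluded-letter set $F$ and the complement-type basis sets handled by finite Boolean operations. Your identity $Z(w,F)=Z(w)\setminus\bigcup_{g\in F}Z(wg)$ is precisely the paper's direct computation $\wp^{-1}(\mathcal{Z})=\big[Z_{\Lambda\times\Gamma}(v)\times Z_{\Lambda\times\Gamma}(w)\big]\cap\bigcap_{j=1}^m\big[Z_{\Lambda\times\Gamma}(v^j)\times Z_{\Lambda\times\Gamma}(w^j)\big]^c$ written before taking preimages, so the two proofs differ only in organization.
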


\begin{proof}

Suppose that $\Lambda\subseteq\Sigma_A$ and $\Gamma\subseteq\Sigma_B$ are two shift spaces.
Let $$\mathcal{Z}:=Z_{\Lambda\boxtimes\Gamma}\big(\kappa,F\big),$$ be a generalized cylinder of $\Lambda\boxtimes\Gamma$, where $\kappa=(v_i,w_i)_{i\leq k}\in \Sigma_{A\times B}^{\NZ\ \text{fin}}\setminus\{\O\}$, $k=l(\kappa)$ and $F\subset A\times B$\sloppy\ is a finite set, say $F=\big\{(f_1,g_1),\ldots,(f_m,g_m)\big\}$.

Let $F':=(f_i)_{1\leq i\leq m}\in A$ and $F'':=(g_i)_{1\leq i\leq m}\in B$, where $(f_i,g_i)\in F$ for all $i$ (notice that $F'$ and $F''$ are finite sequences and not sets). It follows that if $\wp^{-1}(\mathcal{Z})\neq\emptyset$, then

\small
$$\begin{array}{lcl}\wp^{-1}(\mathcal{Z})&=& \displaystyle \Big\{\big(\ldots(x_1,y_1)\ldots(x_k,y_k)(x_{k+1},y_{k+1})\ldots\big)\in\Lambda\times\Gamma:\ (x_i,y_i)=(v_i,w_i)\ \forall i\leq k,\ (x_{k+1},y_{k+1})\notin
F\Big\}\\\\
&=& \displaystyle \big[Z_{\Lambda\times\Gamma}(v)\times Z_{\Lambda\times\Gamma}(w)\big]\cap\bigcap_{j=1}^m\Big[Z_{\Lambda\times\Gamma}(v^j)\times
Z_{\Lambda\times\Gamma}(w^j)\Big]^c,
\end{array}
$$
\normalsize
where $v=(v_i)_{i\leq k}$, $w=(w_i)_{i\leq k}$, $v^j=(v^j_i)_{i\leq k+1}$ with $v^j_i = v_i$ for $i\leq k$ and $v^j_{k+1}=f_j$ and $w^j=(w^j_i)_{i\leq k+1}$ with $w^j_i = w_i$ for $i\leq k$ and $w_{k+1}=g_j$.

Since for each $j=1,\ldots, m$, the set $\big[Z_{\Lambda\times\Gamma}(v^j)\times
Z_{\Lambda\times\Gamma}(w^j)\Big]$ is a clopen of $\Lambda\times\Gamma$, then its complement
 is also a clopen of $\Lambda\times\Gamma$. Hence $\wp^{-1}(\mathcal{Z})$ is a finite intersection of clopen sets
and thus it is a clopen set of $\Lambda\times\Gamma$.

On the other hand, given $\kappa^1,\kappa^2,\ldots,\kappa^m\in\Sigma_{A\times B}^{\NZ\ \text{fin}}\setminus\{\O\}$
we have that
$$\wp^{-1}\Big(Z^c(\kappa^1,\ldots,\kappa^m)\Big)= \wp^{-1}\left(\Big(\bigcup_{j=1}^m Z(\kappa^j)\Big)^c\right)=
\bigcap_{j=1}^m \Big(\wp^{-1}\big(Z(\kappa^j)\big)\Big)^c.$$
By the first part of the proof we have that each $\wp^{-1}\big(Z(\kappa^j)\big)$ is a clopen set and hence we conclude that
$\wp^{-1}\Big(Z^c(\kappa^1,\ldots,\kappa^m)\Big)$ is also a clopen set.
\end{proof}

%=======================================================================================================================

\section{Expansive dynamics and inverse semigroup shifts}

%=========================================================================================================

In \cite{kitchens}, it is proven that if $X$ is a compact zero-dimensional group and $T$ is an expansive and transitive group automorphism of $X$, then $(X,T)$ is conjugate to the full shift over a finite group. In our situation, the full shift over a countably infinite group, when given the operation of entrywise multiplication, has the structure of an inverse semigroup with the empty sequence being the zero element. The shift map fixes the empty sequence, and in general the empty sequence is the only possible point of discontinuity of the shift map.

In this section we prove a result similar to Kitchens', that is under some conditions, an expansive automorphism over a zero-dimensional inverse semigroup can be modeled by the shift map over $\Sigma_A^\N$, for a suitable infinite alphabet $A$.

\begin{lem}\label{inverseopen}
Let $X$ be a compact space and let $T:X\to X$ be a function. Suppose that there exisits $p\in X$ such that $T(p) = p$ and that $T$ is continuous on $X\setminus \{p\}$. Then, for all $U\subset X\setminus \{p\}$
and all $k>0$, the set $T^{-k}(U)$ is open.
\end{lem}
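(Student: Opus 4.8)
The plan is to reduce everything to the one region where $T$ is actually continuous, namely $X\setminus\{p\}$, and to exploit the fixed-point condition to guarantee that the iterated preimages never wander into the bad point $p$. (The statement is understood for $U$ open --- for a general subset the conclusion fails, e.g. the preimage of a single point is typically closed rather than open. Note also that since $\{p\}$ is a closed point in the Hausdorff spaces to which this is applied, $X\setminus\{p\}$ is open in $X$, so ``open in the subspace $X\setminus\{p\}$'' and ``open in $X$'' coincide for subsets avoiding $p$; we may thus take $U$ open in $X$.) The single step I would isolate and then iterate is the following claim: \emph{if $V\subseteq X\setminus\{p\}$ is open in $X$, then $T^{-1}(V)$ is again open in $X$ and again contained in $X\setminus\{p\}$.} Granting this, the lemma follows by induction on $k$: $T^{-1}(U)$ is open and avoids $p$, hence $T^{-2}(U)=T^{-1}(T^{-1}(U))$ is open and avoids $p$, and so on up to $T^{-k}(U)$.

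To prove the step, I would first dispose of the containment: because $T(p)=p$ and $p\notin V$, we have $p\notin T^{-1}(V)$, so $T^{-1}(V)\subseteq X\setminus\{p\}$. For openness, restrict $T$ to the region of continuity. The map $T|_{X\setminus\{p\}}\colon X\setminus\{p\}\to X$ is continuous by hypothesis, so the preimage of the open set $V$ under this restriction, namely $T^{-1}(V)\cap(X\setminus\{p\})$, is open in the subspace $X\setminus\{p\}$. By the containment just established, this set is exactly $T^{-1}(V)$. Finally, since $\{p\}$ is closed, $X\setminus\{p\}$ is open in $X$, and a set that is open in the subspace $X\setminus\{p\}$ and contained in it is open in $X$. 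Hence $T^{-1}(V)$ is open in $X$, completing the step.

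The induction then closes immediately; in fact the containment $T^{-k}(U)\subseteq X\setminus\{p\}$ can be read off directly from $T^{k}(p)=p\notin U$, independently of the openness argument. I expect the only real subtlety --- rather than a genuine obstacle --- to be the bookkeeping of the two topologies (openness in $X$ versus in the subspace $X\setminus\{p\}$), and this is precisely where the two hypotheses pull their weight: the fixed-point condition $T(p)=p$ is what keeps every preimage inside the region of continuity, so the discontinuity of $T$ at $p$ is never triggered, while the closedness of $\{p\}$ is what lets subspace-openness be promoted to openness in $X$. It is worth remarking that compactness of $X$ is not actually needed for this argument; the essential ingredients are the continuity of $T$ off $p$, the equality $T(p)=p$, and the fact that $\{p\}$ is closed.
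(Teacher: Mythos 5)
Your proof is correct in the setting where the paper actually applies this lemma, and its skeleton matches the paper's: the fixed-point condition forces $p \notin T^{-k}(U)$ for all $k$, and everything reduces by induction to openness of a single preimage $T^{-1}(V)$ for $V \subseteq X\setminus\{p\}$ open. (You are also right that $U$ must be taken open; the paper's proof begins exactly that way.) The genuine difference is in how the one-step openness is established, and it matters because the two arguments consume different hypotheses. The paper reads ``$T$ is continuous on $X\setminus\{p\}$'' pointwise: for $x \in T^{-1}(U)$ we have $x \neq p$, so continuity of $T$ at $x$ \emph{as a map on $X$} gives an open $V\ni x$ with $T(V)\subset U$, exhibiting $T^{-1}(U)$ as a union of open sets; closedness of $\{p\}$ is never used. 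You instead read the hypothesis as continuity of the restriction $T|_{X\setminus\{p\}}$ and then promote subspace-openness to openness in $X$, which forces you to assume $X\setminus\{p\}$ is open in $X$, i.e.\ that $\{p\}$ is closed --- an assumption not present in the lemma, whose $X$ is only compact, not Hausdorff or even $T_1$. The distinction is not pedantic: under your reading alone the statement can fail. Take $X=\{a,b,p\}$ with open sets $\emptyset,\{a\},X$, and $T(a)=T(b)=a$, $T(p)=p$; the restriction of $T$ to $X\setminus\{p\}$ is constant, hence continuous, yet $T^{-1}(\{a\})=\{a,b\}$ is not open (this $T$ fails to be continuous at $b$ as a map on $X$, so it does not contradict the paper's version). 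Since the lemma is only invoked in Proposition \ref{shiftconj}, where $X$ is compact Hausdorff, your extra assumption is harmless --- and you flag it honestly --- but the paper's pointwise reading is what makes the lemma true exactly as stated. You are also correct that compactness plays no role in either argument.
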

\begin{proof}
Take $U\subset X\setminus \{p\}$ to be open. Note that $T^{-k}(U)$ will not contain $p$ for any $k$, because $T^k(p) = p$. Hence we only need to show that $T^{-1}(U)$ is open.

If $x\in T^{-1}(U)$, then $T(x) \in U$. Since $T$ is continuous at $x$, there exists an open set $V$ with $x\in V$ such that $T(V) \subset U$, that is, $V \subset T^{-1}(T(V)) \subset T^{-1}(U)$. Hence, $T^{-1}(U)$ is
open.

\end{proof}

\begin{prop}\label{shiftconj}
Let $X$ be a compact Hausdorff space, and let $T:X\to X$ be a function. Suppose that we have $p\in X$ such that $T(p) = p$ and that $T$ is continuous on $X\setminus \{p\}$. Further, suppose that there exists $\{ U_i\}_{i\in A}$ an infinite
partition of clopen sets of $X\setminus \{p\}$, such that $\mathcal{U} = \{ U_i\}_{i\in I} \cup \{\{p\}\}$ is an expansive partition for $T$. Then there exists an infinite alphabet $A$ and a continuous injection $h: X \to \Sigma_A^\N$, which is a homeomorphism onto
its image, such that $h\circ T = \sigma \circ h$. If we assume further that
\begin{equation}\label{denseset}
X\setminus \bigcup_{n\geq 0}T^{-n}(p)
\end{equation}
is dense in $X$, then the image of $h$ is a subshift of $\Sigma_A^\N$.
\end{prop}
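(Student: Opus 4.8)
The plan is to use the itinerary map induced by the partition $\mathcal{U}$. For $x\in X$ and $n\geq 0$, set $h(x)_n:=i$ if $T^n(x)\in U_i$ and $h(x)_n:=\o$ if $T^n(x)=p$, and let $h(x):=(h(x)_n)_{n\geq 0}$. Since $T(p)=p$, whenever $h(x)_n=\o$ we get $T^{n+1}(x)=T(p)=p$ and hence $h(x)_{n+1}=\o$; thus $h(x)$ is a genuine element of $\Sigma_A^\N$ (once the empty letter appears it absorbs the tail), and $h(p)=\O$. The identity $h\circ T=\sigma\circ h$ is immediate, since $h(T(x))_n$ records the partition element of $T^{n+1}(x)$, which is $h(x)_{n+1}=(\sigma h(x))_n$. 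Injectivity is precisely expansiveness: if $h(x)=h(y)$ then $T^n(x)$ and $T^n(y)$ lie in the same member of $\mathcal{U}$ for every $n$, forcing $x=y$.

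The substantive step is continuity of $h$. For a finite word $w=(w_0\cdots w_k)$ with all $w_i\in A$ one has
\[
h^{-1}\big(Z(w)\big)=\bigcap_{i=0}^{k} T^{-i}(U_{w_i}),
\]
and, since each $U_{w_i}\subset X\setminus\{p\}$ is open, Lemma \ref{inverseopen} (trivially so for $i=0$) shows each $T^{-i}(U_{w_i})$ is open, so the finite intersection is open. This already gives continuity at every $x$ whose itinerary $h(x)$ is infinite, because for such points the cylinders $Z(h(x)_0\cdots h(x)_k)$ form a neighbourhood basis of $h(x)$. It remains to treat the generalized cylinders $Z(w,F)$ and the basic neighbourhoods $Z(\O,F)$ of $\O$, equivalently continuity at points with a finite itinerary (including $p$).

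I expect the neighbourhoods of $\O$ and of finite sequences to be the main obstacle. Continuity at $p$ demands, for each finite $F\subset A$, an open set $O\ni p$ with $h(O)\subseteq Z(\O,F)$, i.e. $O\cap\bigcup_{f\in F}U_f=\emptyset$; in other words one must know that finitely many pieces of the partition do not accumulate at $p$. This is where the clopenness of the $U_i$ enters: a finite union $\bigcup_{f\in F}U_f$ is clopen and, missing the point $p$, is closed in $X$, so $X\setminus\bigcup_{f\in F}U_f$ is the required open neighbourhood of $p$. The same fact handles continuity at a point $x_0$ with finite itinerary $w$ of length $k$: there $T^{k+1}$ is continuous at $x_0$ (the orbit $x_0,\dots,T^k(x_0)$ avoids $p$, being trapped in the $U_{w_i}$, and $T^{k+1}=T\circ T^k$ with $T^k(x_0)\in U_{w_k}\neq p$), so pulling back the neighbourhood $X\setminus\bigcup_{f\in F}U_f$ of $p=T^{k+1}(x_0)$ and intersecting with the open set $h^{-1}(Z(w))$ produces a neighbourhood mapped into $Z(w,F)$. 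Once $h$ is a continuous injection the homeomorphism-onto-image claim is automatic: $X$ is compact and $\Sigma_A^\N$ is Hausdorff, so $h$ is a closed map and $h\colon X\to h(X)$ is a homeomorphism.

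Finally, under the density hypothesis I would show $h(X)$ is a subshift. As the continuous image of a compact space in a Hausdorff space, $h(X)$ is compact, hence closed, and it is shift-invariant since $\sigma(h(X))=h(T(X))\subseteq h(X)$ (with equality when $T$ is onto, or in the relaxed one-sided convention recalled after Definition \ref{2-sided_shifts}). For the infinite extension property, observe that $\bigcup_{n\geq 0}T^{-n}(p)=h^{-1}\big(\Sigma_A^{\N\ \text{fin}}\cup\{\O\}\big)$, so the dense set $X\setminus\bigcup_{n\geq 0}T^{-n}(p)$ is exactly the set of points with infinite itinerary; its image is dense in $h(X)$ (since $h$ is a homeomorphism onto its image) and equals the set of infinite sequences of $h(X)$. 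Thus $h(X)^{\text{inf}}$ is dense in $h(X)$, which by the characterization recalled after Definition \ref{2-sided_shifts} is equivalent to the infinite extension property. Hence $h(X)$ is closed, shift-invariant, and satisfies the infinite extension property, i.e. it is a subshift of $\Sigma_A^\N$.
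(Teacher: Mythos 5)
Your proposal is correct, and its skeleton coincides with the paper's: the same itinerary map $h$, injectivity read off from expansiveness, openness of $\bigcap_{j\leq k}T^{-j}(U_{w_j})$ via Lemma \ref{inverseopen}, the same use of $T^{-(K+1)}\bigl(F_{\mathcal{U}}^{c}\bigr)$ together with local continuity of $T^{K+1}$ along a $p$-free orbit segment, and compactness-plus-Hausdorff for the homeomorphism-onto-image claim. There are, however, two genuine differences in execution. For continuity, the paper argues sequentially (it shows $x_n\to x$ implies $h(x_n)\to h(x)$, split into the cases $l(h(x))=\infty$, $0<l(h(x))<\infty$, and $x=p$), whereas you show that preimages of the basic neighbourhoods $Z(w)$, $Z(w,F)$, $Z(\O,F)$ contain open neighbourhoods of each preimage point; since $X$ is only assumed compact Hausdorff, hence not necessarily first countable, your formulation is the technically safer one, although the paper's argument builds exactly the same open sets $V$ and so converts immediately into yours. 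For the subshift claim, the paper verifies the infinite extension property by hand: given a finite $y=h(x)$ and a finite $F\subset A$, it uses density of $X\setminus\bigcup_{n\geq 0}T^{-n}(p)$ to produce inside $V$ a point $z$ with infinite itinerary agreeing with $y$ and with $(K+1)$-st letter outside $F$, forcing the follower set to be infinite. You instead note that $h$ carries that dense set onto $h(X)^{\text{inf}}$ and invoke the equivalence, recalled after Definition \ref{2-sided_shifts} and valid for closed shift-invariant sets, between density of the infinite sequences and the infinite extension property; this is shorter but outsources the work to the cited characterization, while the paper's version is self-contained. Finally, both your proof and the paper's share the same two tacit points: that a finite union $\bigcup_{f\in F}U_f$ is closed in $X$ (i.e.\ clopenness of the $U_i$ is read relative to $X$, not merely relative to $X\setminus\{p\}$), and that $\sigma(h(X))$ equals, rather than is merely contained in, $h(X)$ -- a point you at least flag explicitly, so neither constitutes a gap of your proposal relative to the paper.
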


\begin{proof}
Let $\{ U_i\}_{i\in A}$ be a clopen partition of $X\setminus \{p\}$, and let $\mathcal{U} = \{ U_i\}_{i\in I} \cup \{\{p\}\}$ be an expansive partition for $T$. Consider the full shift $\Sigma_A^\N$ over the index set $A$.

Let $h: X\to \Sigma_A^\N$ be defined by
\[
h(p) = \O
\]
\[
h(x) = \begin{cases} (a_i)_{i = 0}^{\infty} & \text{if } T^i(x) \in U_{a_i}\text{ and } T^i(x) \neq p \text{ for all } i\\ (a_i)_{i = 0}^{K}, & \text{if }T^i(x) \in U_{a_i}\text{ and } T^K(x) \neq p \text{ and } T^{K+1} =
p.\end{cases}
\]

Since $\{ U_i\}_{i\in A}$ is an expansive partition, it is easy to see that $h$ is injective. We claim that $h$ is a homeomorphism onto its image, and that its image is a subshift of $\Sigma_A^\N$.

We first prove that $h$ is continuous. Suppose that $x_n \to x$ in $X$. We have to prove that $h(x_n) \to h(x)$ in $\Sigma_A^\N$. If $l(h(x)) = \infty$, then $h(x)$ is an infinite sequence in $A$. Take $M\in\N$ and let
\[
V = \bigcap_{j = 0}^M T^{-j}(U_{h(x)_j}).
\]
Note that $V$ is open (by Lemma \ref{inverseopen}) and nonempty, and that $p\notin V$. Since $x_n\to x$, there exists $N$ such that for all $n\geq N$, $x_n\in V$. Thus, for all $n\geq N$, the first $M$ entries of the
sequence $h(x_n)$ are equal to that of $h(x)$. Hence, $h(x_n) \to h(x)$.

Suppose now that $l(h(x)) = K$ with $0 < K < \infty$. Let $F\subset A$ be finite, say $F = \{f_1, \dots, f_l\}$. We have to find $N\in\N$ such that $n\geq N$ implies that $l(h(x_n)) \geq l(h(x))$, $h(x_n)_i = h(x)_i$ for
$1\leq i \leq K$. and $h(x_n)_{K+1} \notin F$.

Let
\begin{equation}\label{FU}
F_\mathcal{U} = \bigcup_{i = 0}^l U_{f_i}
\end{equation}
\[
V = \left(\bigcap_{j = 0}^K T^{-j}(U_{h(x)_j})\right)\bigcap T^{-(K+1)}\left(F_\mathcal{U}^c\right).
\]
We note that $x\in V$, since $T^{K+1}(x) = p$. In addition, we have that $p\notin V$. Hence, $T^{K+1}$ is continuous at all points in $V$. We also note that $T^K(V)$ does not contain $p$, because it is contained in
$U_{h(x)_K}$. Hence, $T^{K+1}$ is continuous on $V$.

We claim that $V$ is open. Let $y\in V$. Then $T^{K+1}(y)\in F_\mathcal{U}^c$, an open set. Since $T^{K+1}$ is continuous at $y$, there exists an open set $W_y$ containing $y$ such that $T^{K+1}(W_y)\subset
F_{\mathcal{U}}^c$, that is, $W_y\subset T^{-(K+1)}(F_\mathcal{U}^c)$. Hence
\[
W_y \cap \bigcap_{j=0}^K T^{-j}(U_{h(x)_j})
\]
is open, is contained in $V$, and contains $y$. Hence, $V$ is open.

Now, since $x_n \to x$, there exists $N >0$ such that whenever $n\geq N$, $x_n \in V$. This implies that whenever $n \geq N$, we have $l(h(x_n)) \geq l(h(x))$, $h(x_n)_i = h(x)_i$ for $0\leq i \leq K$, and $h(x_n)_{K+1}$.
Hence we have proven that $h(x_n)$ converges to $h(x)$.

Finally, suppose $l(h(x)) = 0$. Then $x=p$. Let $F\subset A$ be finite, say $F = \{f_1, \dots, f_l\}$, and take $F_\mathcal{U}$ as in \eqref{FU}. Since $x_n \to x$, and $F_\mathcal{U}^c$ is an open set containing $x$, there
exists $N>0$ such that for all $n\geq N$ we have $x_n \in F_\mathcal{U}^c$. This means that for all $n\geq N$, $h(x_n)_1\notin F$. Since $F$ was arbitrary, this means that $h(x_n)\to \O = h(x)$.

Now that we have that $h$ is continuous and injective, since $X$ is compact and $\Sigma_A^\N$ is Hausdorff, $h$ is automatically a homeomorphism onto its image.

We now prove our final statement. We have that $h(X)$ is closed and is clearly closed under the shift, and so it remains to show that it has the infinite extension property. Suppose that $y\in h(X)$ and that $l(y) = K$ with
$0 < K < \infty$. Then $y = h(x)$ with $T^{K+1}(x) = p$. As before, let $F\subset A$ be finite, form $F_\mathcal{U}$, and let
\[
V = \bigcap_{j = 0}^K T^{-j}(y_j) \bigcap T^{-(K+1)}(F_\mathcal{U}^c),
\]
which is an open neighborhood of $x$. Again, $T^{K+1}$ is continuous on $V$. Because the set in \eqref{denseset} is dense in $X$, there exists $z\in X\setminus \bigcup_{n\geq 0}T^{-n}(p)$ in $V$. Now, the first $K$ entries
of $h(z)$ match $y$. Furthermore, the element $h(z)_{K+1}\in A$ must be different from any element of $F$ because of the form of $V$. Since $F$ was arbitrary, we are forced to conclude that the set
\[
\{ a\in A \mid y\gamma \in h(X), \gamma_1 = a\}
\]
is infinite. Hence, $h(X)$ has the infinite extension property.

\end{proof}

\begin{rmk}From now on we suppose that we have a compact Hausdorff space $X$, a map $T: X\to X$ which satisfies the hypotheses of Proposition \ref{shiftconj} and denote the subshift of $\Sigma_A^\N$ given by the
image of the map $h$ by $\Lambda$.
\end{rmk}

\begin{cor}If $T^{-1}(p) = \{ p \}$ then $\Lambda$ is a row-finite subshift of $\Sigma_A^\N$  .
\end{cor}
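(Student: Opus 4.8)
The plan is to translate row-finiteness into a statement about the dynamics of $T$ and then settle it by a finite-subcover argument. First I would unwind the definitions. Because $h$ is a conjugacy with $h\circ T=\s\circ h$ and $\s(\Lambda)=\Lambda$, a two-block $ab$ lies in $B(\Lambda)$ precisely when some $w\in\Lambda$ begins with the letters $a,b$; writing $w=h(x)$, this says exactly that $x\in U_a$ and $T(x)\in U_b$. Hence
\[
\F_1(\Lambda,a)=\{\,b\in\LA : T(U_a)\cap U_b\neq\emptyset\,\},
\]
so proving that $\Lambda$ is row-finite reduces to showing that, for each letter $a$, the image $T(U_a)$ meets only finitely many cells of the partition $\{U_b\}_{b\in A}$.

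Next I would establish two topological facts. The first is that $U_a$ is compact. Although $\{U_b\}_{b\in A}$ is only a partition of $X\setminus\{p\}$, under the homeomorphism $h$ the set $U_a$ corresponds to the clopen cylinder of sequences in $\Lambda$ whose initial letter is $a$; therefore $U_a$ is clopen in all of $X$, hence closed, hence compact, and in particular $p\notin\overline{U_a}$. Since $U_a\subset X\setminus\{p\}$ and $T$ is continuous off $p$, the restriction $T|_{U_a}$ is continuous on a compact set, so $T(U_a)$ is compact. The second fact is where the hypothesis is used: were there $x\in U_a$ with $T(x)=p$, then $x\in T^{-1}(p)=\{p\}$ would force $x=p\notin U_a$, a contradiction; thus $p\notin T(U_a)$.

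Finally I would run the covering argument. Each $U_b$ is open in $X$ and the family $\{U_b\}_{b\in A}$ covers $X\setminus\{p\}\supseteq T(U_a)$. As $T(U_a)$ is compact, finitely many $U_{b_1},\dots,U_{b_N}$ suffice to cover it, whence $T(U_a)\cap U_b=\emptyset$ for all $b\notin\{b_1,\dots,b_N\}$; by the reformulation above $\F_1(\Lambda,a)$ is finite, and since $a\in\LA$ was arbitrary, $\Lambda$ is row-finite. The step I expect to be the crux is guaranteeing that $T(U_a)$ is a \emph{compact subset of} $X\setminus\{p\}$: compactness forces me to know that $U_a$ is closed in the whole of $X$ rather than merely in $X\setminus\{p\}$ (which is why I pass through the clopen cylinders of $\Lambda$), and the exclusion of $p$ from $T(U_a)$ is exactly what the hypothesis $T^{-1}(p)=\{p\}$ provides. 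Without this hypothesis, continuity of $T$ at a point of $U_a$ mapping to $p$ would spread $T(U_a)$ over infinitely many $U_b$, and row-finiteness would fail.
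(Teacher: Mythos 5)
Your proof is correct, but it takes a genuinely different route from the paper's. The paper argues on the symbolic side: since $T^{-1}(p)=\{p\}$ forces $T^{-n}(p)=\{p\}$ for all $n$, no point other than $p$ is ever mapped to $p$, so by the definition of $h$ the only finite sequence in $\Lambda=h(X)$ is the empty sequence $\O$; it then concludes by citing Proposition 3.21 of Ott--Tomforde--Willis, which identifies row-finite shifts as exactly those whose only finite sequence is $\O$. You instead work entirely on the dynamical side: you identify $\F_1(\Lambda,a)$ (up to the harmless possible presence of the empty letter, which cannot affect finiteness) with the set of cells $U_b$ meeting $T(U_a)$, show that $U_a$ is compact by pulling back the clopen cylinder $Z_\Lambda(a)$ through the homeomorphism $h$ --- correctly flagging that clopenness of $U_a$ in $X\setminus\{p\}$ alone would not suffice --- use the hypothesis precisely to ensure $p\notin T(U_a)$, and finish with a finite subcover extracted from the disjoint open family $\{U_b\}_{b\in A}$. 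In effect you re-prove, in this concrete setting, the direction of the cited OTW proposition that is actually needed (that result is itself a compactness statement: a letter with infinitely many followers forces the corresponding nonempty finite word into the closure, hence into $\Lambda$). The paper's proof buys brevity by outsourcing the compactness argument to the citation; yours buys a self-contained argument that makes explicit where the hypothesis $T^{-1}(p)=\{p\}$ enters and where the topological subtlety lies, at the cost of redoing work that the reference already packages.
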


\begin{proof}

It is clear that if $T^{-1}(p) =  \{ p \}$ then $T^{-n}(p) =  \{ p \}$ for all $n = 0, 1, 2, \ldots$ and so the only finite word in $\Lambda$ is the zero word and hence, by Proposition 3.21 in \cite{Ott_et_Al2014}, the
result follows.

\end{proof}

\begin{prop}\label{section3maintheo}
Suppose that $X$ and $T$ are as in the assumptions of Proposition \ref{shiftconj} and let $\Lambda:=h(X)$ be the shift space topologically conjugate to $(X,T)$, which is given by Proposition \ref{shiftconj}. Suppose further that there exists an inverse semigroup operation on $X$, such that $p\cdot x = x\cdot p = p$ for all $x \in X$, that $T$ is an inverse
semigroup homomorphism and that, for all $i, j \in A$, there exists $k \in A$ such that $U_i \cdot U_j \subseteq U_k$. Then there exists a 1-block inverse semigroup operation $\bullet$ on $\Lambda$, such that $(X, \cdot)$ is isomorphic to  $(\Lambda,\bullet )$.
\end{prop}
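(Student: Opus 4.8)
The plan is to build a binary operation $*$ on the alphabet $A$ directly from the partition-compatibility hypothesis, take $\bullet$ to be the $1$-block operation it induces, and then verify that the conjugacy $h$ from Proposition \ref{shiftconj} intertwines $\cdot$ and $\bullet$; the whole inverse semigroup structure of $(\Lambda,\bullet)$ will then be transported for free from $(X,\cdot)$ through $h$.

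First I would pin down the alphabet operation. Fix $i,j\in A$. As $U_i$ and $U_j$ are nonempty members of the partition of $X\setminus\{p\}$, the set $U_i\cdot U_j$ is nonempty, and by hypothesis it lies inside some $U_k$; since the $U_k$ are pairwise disjoint this $k$ is unique, so I set $i*j:=k$, the unique index with $U_i\cdot U_j\subseteq U_{i*j}$. I then extend $*$ to $\tilde A=A\cup\{\o\}$ by making $\o$ a zero, $a*\o=\o*a=\o$, and define $\bullet$ on $\Lambda$ by the local rule $(u\bullet v)_n:=u_n*v_n$. Since the empty letters of any $u\in\Lambda\subseteq\Sigma_A^\N$ propagate to the right, the same holds for $u\bullet v$, so this is a well-formed sequence; and the sets $C_a:=\{(u,v):u_0*v_0=a\}$ partition $\Lambda\times\Lambda$, depend only on the zeroth coordinates (hence are finitely defined with zero memory and anticipation), and satisfy $\s(C_\o)\subseteq C_\o$. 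Thus $\bullet$ is a $1$-block operation in the sense of Definition \ref{defn_block_oper}.

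The core of the argument is the identity $h(x\cdot y)=h(x)\bullet h(y)$ for all $x,y\in X$, which I would check one coordinate at a time. From the construction of $h$ one has $h(z)_n=\o$ precisely when $T^n(z)=p$, and $T^n(z)\in U_{h(z)_n}$ whenever $h(z)_n\neq\o$. As $T$ is a semigroup homomorphism, $T^n(x\cdot y)=T^n(x)\cdot T^n(y)$. If $T^n(x)\neq p$ and $T^n(y)\neq p$, then $T^n(x)\in U_{h(x)_n}$ and $T^n(y)\in U_{h(y)_n}$, so $T^n(x\cdot y)\in U_{h(x)_n}\cdot U_{h(y)_n}\subseteq U_{h(x)_n*h(y)_n}$, giving $h(x\cdot y)_n=h(x)_n*h(y)_n$. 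If instead $T^n(x)=p$ or $T^n(y)=p$, then $p$ being the zero of $X$ forces $T^n(x\cdot y)=p$, so $h(x\cdot y)_n=\o=h(x)_n*h(y)_n$. Hence the identity holds in every coordinate. A useful byproduct is that $\bullet$ genuinely maps into $\Lambda$: for $u,v\in\Lambda$ write $u=h(x)$, $v=h(y)$, so $u\bullet v=h(x\cdot y)\in\Lambda$.

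To conclude, $h:X\to\Lambda$ is a bijection by Proposition \ref{shiftconj} and a semigroup homomorphism by the identity above, hence a semigroup isomorphism; as isomorphisms preserve the defining axioms of an inverse semigroup, $(\Lambda,\bullet)$ is an inverse semigroup isomorphic to $(X,\cdot)$, with zero $\O=h(p)$. I expect the main obstacle to be the bookkeeping around the empty letter, namely making sure the a priori coordinatewise operation $\bullet$ lands in $\Lambda$ and that the cases $T^n(x)=p$ are handled consistently; both concerns evaporate once the intertwining identity is established. The only other point genuinely using the partition hypothesis is the well-definedness (uniqueness of $k$) of the alphabet operation $*$.
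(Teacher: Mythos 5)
Your proposal is correct and is essentially the paper's argument with the logical direction reversed: the paper defines $\bullet$ by transporting $\cdot$ through $h$ (so the isomorphism $(X,\cdot)\cong(\Lambda,\bullet)$ is tautological) and then verifies, splitting into cases according to the lengths of the two sequences, that $\bullet$ acts coordinatewise via the inherited alphabet operation, whereas you define $\bullet$ as the canonical 1-block operation induced by the alphabet operation (so 1-block-ness is tautological) and then verify the intertwining identity $h(x\cdot y)=h(x)\bullet h(y)$. The key computation is identical in both — $T^n(x\cdot y)=T^n(x)\cdot T^n(y)\in U_{h(x)_n}\cdot U_{h(y)_n}\subseteq U_{h(x)_n * h(y)_n}$ when neither iterate hits $p$, with the zero element $p$ and the empty letter handling the remaining cases — though your uniform coordinatewise treatment avoids the paper's three-case split on lengths and makes explicit two well-definedness points the paper leaves implicit (uniqueness of the index $k$ via disjointness of the partition, and that the coordinatewise product of elements of $\Lambda$ actually lands in $\Lambda$).
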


\begin{proof}

First notice that the alphabet $A $ inherits the multiplication operation from the multiplication on $U_i$, that is, if $U_{a_i}\cdot U_{b_i}\subseteq U_{c_i}$ then $ a_i \cdot b_i = c_i$.

Let $h: X \to \Lambda$ be as in Proposition \ref{shiftconj} and let multiplication in $\Lambda$, $\bullet: \Lambda \times \Lambda \to \Lambda$,
be defined by $$ x \bullet y := h \left(h^{-1}(x) \cdot h^{-1}(y) \right),$$
for all $x = (x_n)$ and $y = (y_n)$ in $\Lambda$.

From the definition of $\bullet$ it is clear that $(X,\cdot)$ is isomorphic to $(\Lambda, \bullet)$. All we need to show is that $\bullet$ is a 1-block operation. So, let $z_1= (a_i)$ and $z_2 = (b_i)$ be in
$\Lambda$. Notice that $z_1 = h(x)$ and $z_2 = h(y)$ for some $x,y \in X$. We now divide the proof in three cases:

First, suppose that both $z_1$ and $z_2$ have infinite length. Then $z_1 \bullet z_2 = h(x \cdot y) = (c_i)$, where $c_i$ is such that $T^i(x \cdot y) = T^i(x) \cdot T^i(y) $ belongs to $U_{c_i}$. Since $T^i(x) \in U_{a_i}$
and $T^i(y) \in U_{b_i}$ it follows that $c_i = a_i \cdot b_i$.

Now, suppose that $z_1$ has finite length and $z_2$ has infinite length. Let $K$ be the smallest integer such that $T^K(x)= p$.  Then, since $T^i(x) \in U_{a_i}$ and $T^i(y) \in U_{b_i}$ for all $1\leq i \ < K$, and $ p
\cdot U_{b_i} = p$ for all $i \geq K$, we have that $z_1 \bullet z_2 = h(x \cdot y) = (c_i)_{i=1}^{K-1}$, where $c_i$ is such that $T^i(x \cdot y) = T^i(x) \cdot T^i(y) $ belongs to $U_{c_i}$ for all $i< K$ (what as before
implies that $c_i = a_i \cdot b_i$).

Finally the case when both $z_1$ and $z_2$ have finite length is analogous to the above and we leave it to the reader.

\end{proof}

%=========================================================
\section{Block operations for shift spaces}\label{block operations}

%================================================================

In this section we will present some general properties of block operations defined on shift spaces over countable alphabets. We shall pay particular attention to 1-block operations.

\begin{prop} If $\Lambda\subset\Sigma_A^\NZ$ is a shift space and $\bullet$ is a block operation in $\Lambda$, then the shift map is a homomorphism for $\bullet$.
\end{prop}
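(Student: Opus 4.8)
The plan is to verify the homomorphism identity $\sigma(x\bullet y)=\sigma(x)\bullet\sigma(y)$ directly from the defining formula \eqref{eq_block_oper_1} of a block operation, checking it coordinate by coordinate. First I would observe that since $\Lambda$ is a shift space we have $\sigma(\Lambda)=\Lambda$, so that $\sigma(x),\sigma(y)\in\Lambda$ and the expression $\sigma(x)\bullet\sigma(y)$ on the right-hand side is well-defined. This is the only well-definedness point that needs mentioning before the computation.

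Next, fixing $x,y\in\Lambda$ and $n\in\NZ$, I would compute the $n$-th entry of each side. On the left, using $(\sigma(z))_n=z_{n+1}$ together with the defining formula evaluated at index $n+1$,
$$(\sigma(x\bullet y))_n=(x\bullet y)_{n+1}=\sum_{a\in\tilde A}a\,\mathbf{1}_{C_a}\big(\sigma^{n+1}(x),\sigma^{n+1}(y)\big).$$
On the right, applying the defining formula to the pair $(\sigma(x),\sigma(y))$ and using the iteration identity $\sigma^{n}\circ\sigma=\sigma^{n+1}$,
$$(\sigma(x)\bullet\sigma(y))_n=\sum_{a\in\tilde A}a\,\mathbf{1}_{C_a}\big(\sigma^{n}(\sigma(x)),\sigma^{n}(\sigma(y))\big)=\sum_{a\in\tilde A}a\,\mathbf{1}_{C_a}\big(\sigma^{n+1}(x),\sigma^{n+1}(y)\big).$$
The two expressions coincide, so the entries agree for every $n\in\NZ$, and hence $\sigma(x\bullet y)=\sigma(x)\bullet\sigma(y)$.

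The argument is essentially bookkeeping: the whole point is that the local rule of a block operation reads the symbol determined by which cell $C_a$ of the partition contains the shifted pair $(\sigma^n(x),\sigma^n(y))$, so shifting the inputs once and then reading coordinate $n$ produces exactly the same symbol as reading coordinate $n+1$ of the unshifted product. I do not expect any genuine obstacle here; the only things requiring care are invoking $\sigma(\Lambda)=\Lambda$ so both sides are defined, and keeping the index range straight in the one-sided case $\NZ=\N$, where $\sigma$ on the left simply discards the entry $(x\bullet y)_0$ while every coordinate on the right is matched through $\sigma^{n}(\sigma(x))=\sigma^{n+1}(x)$.
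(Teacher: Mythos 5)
Your proof is correct, and it is essentially the argument the paper intends: the paper's own "proof" merely cites the analogous statement for sliding block codes (Proposition 3.12 of the sliding-block-codes reference), which rests on exactly this coordinate-by-coordinate computation from the defining formula, using $\sigma(\Lambda)=\Lambda$ so that both sides are defined and $(\sigma(z))_n=z_{n+1}$ to match indices. Your write-up simply makes explicit what the paper leaves to the reader.
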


\begin{proof}
The proof is similar to that in \cite[Proposition 3.12]{GoncalvesSobottkaStarling2015}.
\end{proof}

The following result is analogous to Corollaries 3.13-3.15 in \cite{GoncalvesSobottkaStarling2015}.

\begin{cor}\label{standard_facts}
If $\bullet$ is a block operation in a shift space $\Lambda$, then:

\begin{enumerate}

\item if $x,y\in\Lambda$ are sequences with period $p,q\geq 1$, respectively (that is, $\s^p(x)=x$ and $\s^q(y)=y$), then $x\bullet y$ has period $r:=LCM(p,q)$;

\item $\O\bullet\O$ is a constant sequence;

\item if $\O\bullet\O=\O$ then the product of two finite sequences of $\Lambda$ is also a finite sequence of $\Lambda$.
\end{enumerate}
\end{cor}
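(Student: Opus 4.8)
The plan is to handle the three items in order, using two ingredients: the fact just established (in the preceding proposition) that $\s$ is a homomorphism for $\bullet$, and the local structure of $\bullet$, namely that $(x\bullet y)_n=\o$ precisely when $(\s^n(x),\s^n(y))\in C_{\o}$, where $\{C_a\}_{a\in\tilde A}$ is the finitely defined partition from Definition~\ref{defn_block_oper}. Throughout, ``$z$ has period $p$'' means $\s^p(z)=z$.

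For (i) I would simply iterate the homomorphism property. With $r:=LCM(p,q)$, applying $\s(x\bullet y)=\s(x)\bullet\s(y)$ repeatedly gives $\s^r(x\bullet y)=\s^r(x)\bullet\s^r(y)$; since $p\mid r$ and $q\mid r$ we have $\s^r(x)=x$ and $\s^r(y)=y$, so $\s^r(x\bullet y)=x\bullet y$, i.e.\ $r$ is a period of $x\bullet y$. For (ii), observe that $\s(\O)=\O$, so $\O$ has period $1$; applying (i) with $p=q=1$ gives $\s(\O\bullet\O)=\O\bullet\O$, and any $\s$-fixed sequence $z$ satisfies $z_{n+1}=z_n$ for all $n$ and is hence constant. (Equivalently, one reads off from the local rule that $(\O\bullet\O)_n$ depends only on the pair $(\O,\O)$ and not on $n$.)

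Item (iii) is where the work lies. Assume $\O\bullet\O=\O$, which unwinds to $(\O,\O)\in C_{\o}$, and fix finite sequences $x,y\in\Lambda$; put $m:=\max\{l(x),l(y)\}$, so that $x_j=y_j=\o$ for every $j>m$. Since $C_{\o}$ is finitely defined, I would choose a pseudo cylinder $[b]_k^\ell\subseteq C_{\o}$ containing $(\O,\O)$. Because $(\O,\O)$ lies in it, this pseudo cylinder constrains finitely many coordinates (positions $k$ through $\ell$) of both entries to equal the empty letter $\o$; hence $(\s^n(x),\s^n(y))\in[b]_k^\ell\subseteq C_{\o}$ as soon as $k+n>m$, since then the constrained coordinates $k+n,\dots,\ell+n$ of both $x$ and $y$ are empty. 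For all such $n$ we get $(x\bullet y)_n=\o$. To finish, note that $x\bullet y\in\Lambda\subseteq\Sigma_A^\NZ$ now has at least one empty letter; since every element of $\Sigma_A^\NZ$ is either an infinite sequence (all letters non-empty) or a finite sequence, $x\bullet y$ cannot be infinite and therefore lies in $\Sigma_A^{\NZ\ \text{fin}}$, giving $x\bullet y\in\Lambda^{\text{fin}}$.

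The main obstacle is the two-sided case of (iii). When $\NZ=\N$ the argument is transparent, since $\s^n(x)=\O$ once $n>l(x)$, and then $(x\bullet y)_n=(\O\bullet\O)_n=\o$ directly. For $\NZ=\Z$, however, a finite sequence is left-infinite and $\s^n(x)$ never equals $\O$, so one cannot substitute $\O\bullet\O$ pointwise; moreover the topology is not first countable at $\O$. This is exactly why the argument must pass through a pseudo cylinder around $(\O,\O)$ rather than through the point $\O\bullet\O$ itself: finite-definedness of $C_{\o}$ supplies an all-empty-block subset of $C_{\o}$ that the shifted pairs $(\s^n(x),\s^n(y))$ are guaranteed to enter for large $n$, after which the structural dichotomy of $\Sigma_A^\NZ$ forces the product to be finite.
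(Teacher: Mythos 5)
Your proof is correct, and it is essentially the argument the paper intends: the paper states this corollary without proof, deferring to the analogous Corollaries 3.13--3.15 of \cite{GoncalvesSobottkaStarling2015} for sliding block codes, and those are proved exactly along your lines---(i) and (ii) by iterating the homomorphism property of $\s$, and (iii) from the finitely defined structure of $C_{\o}$. The one point worth highlighting is that your pseudo-cylinder argument around $(\O,\O)$ is what correctly handles the two-sided case of (iii), where a nonempty finite sequence is left-infinite so the naive substitution $\s^n(x)=\O$ is unavailable; this subtlety is invisible in the paper's bare appeal to analogy, and your treatment of it is sound.
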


\qed

\subsection{1-block operations}\label{sec_1-block operations}

We now focus on characterizing subshifts with a 1-block operation for which the empty word $\O$ is a zero.

\begin{defn}\label{canonical_oper} Let $\cdot$ be a binary operation on $A$. We extend it to $\tilde{A}$ by defining $a\cdot\o=\o\cdot a:=\o$ for all $a\in A$, $\o \cdot \o = \o$, and define the canonical 1-block operation $\bullet$, induced by $\cdot$, on the shift space $\Sigma_A^\NZ$ as
follows:

\begin{equation}\label{inducedOperation}(x_i)_{i\in\NZ}\bullet(y_i)_{i\in\NZ}:=(x_i \cdot y_i)_{i\in\NZ},\qquad \text{ for all } (x_i)_{i\in\NZ},(y_i)_{i\in\NZ}\in\Sigma_A^\NZ.\end{equation}

\end{defn}

Note that the shift map is a homomorphism for $\bullet$ (since it is a block operation) and $\O$ is a zero for $\bullet$, that is, $x\bullet\O=\O\bullet x=\O$ for all $x\in\Sigma_A^\NZ$.

The next proposition gives a sufficient condition under which a 1-block operation on a shift space $\Lambda$ is induced from a operation on the alphabet as in \eqref{inducedOperation}.

\begin{prop}\label{1-block_alph} Let $\Lambda\subseteq\Sigma_A^\NZ$ be a shift space which is endowed with a 1-block operation $\bullet$. Suppose that $\bullet$ has no zero divisors and $\O\bullet x=x\bullet\O=\O$ for all
$x\in\Lambda$. Then there exists a binary operation $\cdot$ on the alphabet of $\Lambda$, which canonically induces $\bullet$.
\end{prop}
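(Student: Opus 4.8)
The plan is to read off the local rule of $\bullet$ and to show that it never sends a pair of genuine letters to $\o$; the operation $\cdot$ on $\LA$ will then be precisely this rule restricted to letters. Since $\bullet$ is a $1$-block operation it has a local rule $\alpha\colon B_1(\Lambda\times\Lambda)\to\tilde A$ with $(x\bullet y)_n=\alpha(x_n,y_n)$ for all $x,y\in\Lambda$ and $n\in\NZ$. Evaluating at $y=\O$ and at $x=\O$ and using that $\O$ is a zero gives $\alpha(c,\o)=\alpha(\o,d)=\o$ for every occurring $c,d$, so $\alpha$ already agrees with the canonical $\o$-absorbing extension of Definition~\ref{canonical_oper} on every pair involving $\o$. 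Thus the statement reduces to the claim that $\alpha(a,b)\neq\o$ whenever $(a,b)\in B_1(\Lambda\times\Lambda)$ with $a,b\in\LA$. Granting this, I would set $a\cdot b:=\alpha(a,b)\in\LA$ on occurring letter pairs (and arbitrarily in $\LA$ on the remaining pairs), extend by $\o$-absorption, and note that $(x\bullet y)_n=\alpha(x_n,y_n)=x_n\cdot y_n$ for every $n$, which is exactly the operation canonically induced by $\cdot$.

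I would prove the claim by contradiction: if $\alpha(a,b)=\o$ for some occurring letter pair, I produce nonzero $x,y$ with $x\bullet y=\O$, contradicting the absence of zero divisors. Using $a,b\in\LA$, the density of $\Lambda^{\text{inf}}$ and $\s(\Lambda)=\Lambda$, I pick infinite sequences $x,y\in\Lambda^{\text{inf}}$ with $x_0=a$ and $y_0=b$ (the clopen set $\{u_0=a\}$ is nonempty, hence meets the dense set $\Lambda^{\text{inf}}$, and similarly for $b$). Then $(x,y)\in C_\o$, and the shift-invariance of $C_\o$ (item~2 of Definition~\ref{defn_block_oper}) forces $\s^n(x,y)\in C_\o$, i.e.\ $\alpha(x_n,y_n)=\o$, for every $n\geq 0$; since $x,y$ are infinite, each such $(x_n,y_n)$ is a pair of genuine letters.

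In the one-sided case $\NZ=\N$ this finishes the argument, since positions $n\geq 0$ are all positions, so $x\bullet y=\O$ while $x,y\neq\O$. The delicate case is $\NZ=\Z$, where the shift-invariance of $C_\o$ only propagates $\o$ to the right and a single bad letter pair forces $x\bullet y$ to be merely a (possibly nonzero) finite sequence of length $\leq -1$. To reach all negative coordinates I would spread the all-$\o$ right half-line over the whole line. First, $C_\o$ is closed in $\Lambda\times\Lambda$: for a $1$-block operation $C_\o=\{(u,v):\alpha(u_0,v_0)=\o\}$, and its complement $\{\alpha(u_0,v_0)\neq\o\}$ is a union of the basic clopen sets $\{u_0=c\}\times\{v_0=d\}$, hence open. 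Using compactness of $\Lambda\times\Lambda$ and continuity of $\s\times\s$, I extract from $\bigl(\s^n(x),\s^n(y)\bigr)$, $n\to+\infty$, a convergent subnet with limit $(x^\infty,y^\infty)$; since for each $m$ the point $\s^{m+n_i}(x,y)$ lies in the closed $\s$-invariant set $C_\o$ once $m+n_i\geq 0$, its limit $\s^m(x^\infty,y^\infty)$ does too, giving $\alpha(x^\infty_m,y^\infty_m)=\o$ for all $m\in\Z$, i.e.\ $x^\infty\bullet y^\infty=\O$.

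I expect the main obstacle to be exactly the final non-degeneracy step: ruling out that the limit lands on the axis $\bigl(\{\O\}\times\Lambda\bigr)\cup\bigl(\Lambda\times\{\O\}\bigr)$, in which case the no-zero-divisor hypothesis yields no contradiction. This collapse can occur only through letter-escape, the revealed letters $x_n$ (or $y_n$) running off to infinity in the countable alphabet, so that $\o$-propagation drives a whole limit to $\O$. The crux is therefore to arrange that some genuine letter recurs along the revealed tail: when the set of $\o$-producing letter pairs is finite this is immediate by pigeonhole (a letter pair $(c,d)$ with $c,d\neq\o$ recurs, so along the corresponding subnet $x^\infty_0=c$, $y^\infty_0=d$ and hence $x^\infty,y^\infty\neq\O$), while the general case needs a separate argument preventing simultaneous escape, e.g.\ by choosing $x,y$ inside a minimal subsystem of the forward orbit closure so that the extracted limit is non-wandering. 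Once a nonzero $(x^\infty,y^\infty)$ with $x^\infty\bullet y^\infty=\O$ is secured, the contradiction is immediate and the claim, hence the proposition, follows.
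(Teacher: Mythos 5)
Your reduction and your one‑sided argument coincide with the paper's proof, which consists of exactly two sentences: read off the $1$-block local rule $\alpha$ and assert that, ``since $\Lambda$ has no zero divisors,'' $a\cdot b:=\alpha(a,b)$ defines an operation on $\LA$. The entire content is the claim that $\alpha(a,b)\neq\o$ on letter pairs, and for $\NZ=\N$ your argument (pick $x,y\in\Lambda^{\text{inf}}$ with $x_0=a$, $y_0=b$; then $x\bullet y=\O$ while $x,y\neq \O$) is a complete and correct justification of it --- indeed more careful than the paper's text, which never argues the point. One small slip that is harmless there: in the two-sided topology the pseudo-cylinder $\{u\in\Lambda:\ u_0=c\}$ is open but \emph{not} clopen ($\O$ lies in its closure whenever infinitely many distinct past-rays run through $c$); your closedness claim for $C_\o$ survives because only openness of its complement is needed.

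The genuine gap is the case $\NZ=\Z$, exactly where you flagged it, and it cannot be repaired. Your pigeonhole patch rests on the false step ``$x^\infty_0=c$'': in the two-sided topology a net of sequences all carrying the letter $c$ at coordinate $0$ can converge to $\O$, since neighborhoods of $\O$ exclude only finitely many past-rays and constrain no fixed coordinate; so recurrence of the bad pair does not keep the limit off the axes, and the minimal-subsystem idea fails too, because the forward orbit closure can have $\{(\O,\O)\}$ as its unique minimal set. Worse, the two-sided statement itself is false, so no completion exists. Take $A=\{a\}\cup\{s_p:\ p<0\}\cup\{t_{p,i}:\ p\in\Z,\ i\in\N\}$ and let $\Lambda\subset\Sigma_A^\Z$ be the closure of the infinite sequences $S^{(c)}$ ($c\in\Z$), given by $S^{(c)}_m=s_{m+c}$ for $m<-c$ and $S^{(c)}_m=a$ for $m\geq-c$, together with all sequences $(t_{m+c,\iota(m)})_{m\in\Z}$ with $c\in\Z$, $\iota:\Z\to\N$; this $\Lambda$ is a shift space whose nonzero finite sequences are precisely the rays $(t_{m+c,\iota(m)})_{m\leq k}$ (the $t$-rays have infinitely many followers, the $s$- and $a$-rays exactly one). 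Define $\alpha(a,a)=\o$, $\alpha(s_p,s_q)=t_{\min(p,q),0}$, $\alpha(s_p,a)=t_{p,0}$, $\alpha(a,s_q)=t_{q,0}$, $\alpha(t_{p,i},u)=t_{p,0}$ for every letter $u$, $\alpha(s_p,t_{q,j})=\alpha(a,t_{q,j})=t_{q,0}$, extended $\o$-absorbingly. One checks that every product of two nonzero elements of $\Lambda$ is again a nonzero element of $\Lambda$: for instance $S^{(c)}\bullet S^{(d)}=(t_{m+\min(c,d),0})_{m\leq-\min(c,d)-1}$, a nonzero finite sequence of $\Lambda$, and all other products are $t$-sequences truncated at the shorter factor. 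Hence $\bullet$ is a $1$-block operation with $\O$ as zero and with no divisors of zero, yet $\alpha(a,a)=\o$ with $a\in\LA$, so no operation on the alphabet canonically induces $\bullet$. Note that here $(a,a)$ recurs along the entire forward orbit of $(S^{(0)},S^{(0)})$ while $\s^n(S^{(0)},S^{(0)})\to(\O,\O)$: your ``letter escape'' scenario realized by a constantly recurring pair. So your honest assessment was correct, and what you have actually located is a gap in the paper itself: its one-line justification is valid only when a single $\o$ in a product forces the whole product to equal $\O$, i.e.\ only for $\NZ=\N$; for $\NZ=\Z$ the proposition as stated needs additional hypotheses beyond those given.
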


\begin{proof}
Suppose that $\bullet$ is a 1-block binary operation on $\Lambda$. This means that there exists $\alpha: \LA\cup\{\o\}\times \LA\cup\{\o\} \to \LA\cup\{\o\}$, such that for all $x, y\in\Lambda$ and $n\in\NZ$,
$$(x\bullet y)_n = \alpha(x_n,y_n).$$
Hence, since $\Lambda$ has no zero divisors, defining on $\LA$ the binary operation $\cdot$ given (for $a,b\in \LA$) by $$a\cdot b:= \alpha(a,b),$$ we get a binary operation on the alphabet which induces
$\bullet$.

\end{proof}

Although the above proposition gives a one-to-one correspondence between operations on the alphabet $A$ and a class of operations on the shift $\Sigma_A$, in general the induced operation on $\Sigma_A^\NZ$ and the corresponding
operation on $A$ do not have the same properties (Proposition \ref{group-inversesemigroup} gives an example of this).

Next we characterize continuous binary operations on $\Sigma_A^\N$ (where $\Sigma_A^\N\times\Sigma_A^\N$ is given the product topology of $\Sigma_A^\N$) in terms of the operation on the alphabet.

\begin{theo} \label{continuityfinite} Let $\Lambda\subseteq \Sigma_A^\N$ be shift space and $\bullet$ a 1-block multiplication induced by an operation $\cdot$ on $\LA$. Then $\bullet$ is a topological operation
if and only if for all $a\in \LA$ the set $\{ (b,c)\in \LA \times \LA: b\cdot c =a \}$
is finite.
\end{theo}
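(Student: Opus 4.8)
The plan is to prove both implications by sequential arguments, which is legitimate because $\Sigma_A^\N$, and hence $\Lambda$ and $\Lambda\times\Lambda$, is metrizable. Throughout I will use the neighbourhood bases for this exotic topology: at an infinite sequence $x$ the cylinders $Z_\Lambda\big((x_i)_{i\le M}\big)$ form a basis; at a finite sequence $x$ of length $l(x)$ the sets $Z_\Lambda(x,F)$ (with $F\subset A$ finite) form a basis; and at $\O$ the sets $\{w\in\Lambda:\ w_1\notin F\}$ form a basis, so that a sequence converges to $\O$ as soon as its first letters eventually leave every finite subset of $A$. I also record that, since $\bullet$ is the canonical operation induced by $\cdot$, the empty letter is absorbing, so $\O$ is a zero and $\O\bullet w=w\bullet\O=\O$ for all $w$. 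Write $S_a:=\{(b,c)\in\LA\times\LA:\ b\cdot c=a\}$.

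For the forward implication I argue by contraposition. Suppose some $a\in\LA$ has $S_a$ infinite; then at least one coordinate projection of $S_a$ is infinite, say the first, and I choose pairs $(b_n,c_n)\in S_a$ with the $b_n$ pairwise distinct (the other case being symmetric). Since each $b_n,c_n$ lies in $\LA$, shift invariance lets me pick $x^{(n)},y^{(n)}\in\Lambda$ with $x^{(n)}_1=b_n$ and $y^{(n)}_1=c_n$. Distinctness of the $b_n$ forces $x^{(n)}\to\O$, and after passing to a subsequence I may assume $y^{(n)}\to\bar y$ by compactness. Each product $x^{(n)}\bullet y^{(n)}$ has first letter $b_n\cdot c_n=a\neq\o$, hence lies in the clopen set $\{w\in\Lambda:\ w_1=a\}$, which excludes $\O$; as this set is closed, no subsequence of the products can converge to $\O$. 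But continuity of $\bullet$ would give $x^{(n)}\bullet y^{(n)}\to\O\bullet\bar y=\O$, a contradiction. Thus $\bullet$ topological implies every $S_a$ finite.

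For the converse, assume every $S_a$ is finite and take $(x^{(n)},y^{(n)})\to(x,y)$; I must show $x^{(n)}\bullet y^{(n)}\to x\bullet y=:z$, where $z_i=x_i\cdot y_i$. On each coordinate $i$ with $z_i\neq\o$ one has $x_i,y_i\neq\o$, so $x,y$ have length at least $i$ and coordinatewise convergence forces $x^{(n)}_i=x_i$, $y^{(n)}_i=y_i$ eventually, whence $(x^{(n)}\bullet y^{(n)})_i=z_i$; this settles the case $l(z)=\infty$ and the matching part of the other cases. The real content is the first coordinate $K+1$ at which $z$ becomes empty (taking $K+1=1$ when $z=\O$). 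Fix a finite $F\subset A$ giving a basic neighbourhood of $z$ and examine $x^{(n)}_{K+1}\cdot y^{(n)}_{K+1}$. If both $x_{K+1},y_{K+1}$ are nonempty letters with $x_{K+1}\cdot y_{K+1}=\o$ (a zero-divisor position), convergence pins $x^{(n)}_{K+1},y^{(n)}_{K+1}$ to these fixed letters and the product is $\o\notin F$. Otherwise at least one of $x_{K+1},y_{K+1}$ equals $\o$, so the corresponding letters of $x^{(n)}$ or $y^{(n)}$ eventually leave every finite set; since $\bigcup_{f\in F}S_f$ is finite, its first- (or second-) coordinate projection is finite, so the escaping letters eventually avoid it, forcing $x^{(n)}_{K+1}\cdot y^{(n)}_{K+1}\notin F$. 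Combined with the matching on coordinates $\le K$, this yields $x^{(n)}\bullet y^{(n)}\in Z_\Lambda(z,F)$ eventually, i.e.\ $x^{(n)}\bullet y^{(n)}\to z$.

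I expect the main obstacle to be the converse direction's analysis at the transition coordinate $K+1$: one must correctly translate convergence in this topology at finite sequences and at $\O$ (where only the first letter is constrained), and then combine the escape-to-infinity of the relevant letters with the finiteness of the fibres $S_f$ to prevent the product letter from landing in $F$. The admissibility of zero divisors ($b\cdot c=\o$ with $b,c\neq\o$), allowed here since no zero-divisor-freeness is assumed, forces the separate bookkeeping above but does not genuinely obstruct the argument.
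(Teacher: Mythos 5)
Your proof is correct, and it reaches the theorem by a genuinely different route than the paper. The paper's argument is structural: it factors the multiplication map $\Phi(x,y)=x\bullet y$ as $\hat\Phi\circ\wp$, where $\wp:\Lambda\times\Lambda\to\Lambda\boxtimes\Lambda$ is continuous by Proposition \ref{wp_continuity} and $\hat\Phi$ is a sliding block code with $\hat\Phi^{-1}(\O)=\{\O\}$, and then it quotes the Curtis--Hedlund--Lyndon-type continuity criterion for sliding block codes over infinite alphabets (Theorem 3.16 of \cite{GoncalvesSobottkaStarling2015}), which says precisely that $\hat\Phi$ is continuous if and only if each fibre $\{(b,c)\in\LA\times\LA:\ b\cdot c=a\}$ is finite. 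You instead give a self-contained sequential argument (legitimate, as you note, because the one-sided topology is metrizable): your forward direction manufactures an explicit discontinuity witness --- factors converging to a pair whose product must be the empty sequence, while the products remain in the closed cylinder of sequences with first letter $a$ --- and your converse direction verifies convergence coordinatewise, isolating the transition coordinate $K+1$ and using that letters escaping every finite set eventually avoid the finitely many first (or second) coordinates of pairs $(b,c)$ with $b\cdot c\in F$. The paper's route buys brevity and places the result inside the sliding-block-code framework, but it leans on an external theorem and on the implicit fact that $\wp$ --- a continuous surjection from a compact space onto a Hausdorff space, hence a quotient map --- is what allows continuity of $\Phi$ to pass back to $\hat\Phi$ in the ``only if'' direction; your route is longer but elementary, sidesteps that subtlety entirely, and makes visible exactly where finiteness of the fibres is used. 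Two minor remarks: your ``zero-divisor position'' case is vacuous, since an operation on $\LA$ takes values in $\LA$, so a product of two nonempty letters is never the empty letter (keeping the case costs nothing); and the neighbourhood-basis descriptions you assert (cylinders at infinite sequences, the sets $Z_\Lambda(x,F)$ at finite sequences, and first-letter escape at the empty sequence) are indeed the correct description of this topology, so asserting them without proof is reasonable.
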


\begin{proof} Let $\Phi:\Lambda\times\Lambda\to\Lambda$ be given by $\Phi(x,y):= x\bullet y$. Then $\Phi$ induces a map $\hat\Phi:\Lambda\boxtimes\Lambda\to\Lambda$ defined for all $(a_i,b_i)_{i\in\N}\in\Lambda\boxtimes\Lambda$ by and $n\in\N$ by
$$\Big(\hat\Phi\big((a_i,b_i)_{i\in\N}\big)\Big)_n:=\left\{\begin{array}{lcl}
a_i\cdot b_i &,\ if& i\leq l\big((a_i,b_i)_{i\in\N}\big)\\\\
\o &,\ if& i> l\big((a_i,b_i)_{i\in\N}\big)
\end{array}\right.
$$

Notice that we can now write $\Phi=\hat \Phi\circ\wp$. Since Proposition \ref{wp_continuity} assures that $\wp$ is continuous, we just need to check that $\hat\Phi$ is continuous. However, $\hat\Phi$ is a sliding block code (see Definition 3.7 in \cite{GoncalvesSobottkaStarling2015}) and, furthermore, $\hat\Phi^{-1}(\O)=\{\O\}$. Therefore, from Theorem 3.16 of \cite{GoncalvesSobottkaStarling2015}, it follows that $\hat\Phi$ is continuous if, and only if, for each $a\in \LA$ the set $\hat\Phi^{-1}\big(Z_\Lambda(a)\big)=\{ (b,c)\in \LA \times \LA: b\cdot c =a \}$ is finite.

\end{proof}

We end this section with an example which will be important in the sequel.

\begin{ex}\label{example_top_semi_group} Let $A=\bigcup_{i^\in\N}G_i$, where $\{(G_i,\cdot_i)\}_{i\in\N}$ is a collection of disjoint finite groups. Define for $a,b\in A$ the operation
$$a\cdot b:=\left\{\begin{array}{ll} a\cdot_i b &\text{ if } a,b\in G_i \\
a &\text{ if }  a\in G_i, b\in G_j \text{ and } i>j\\
b &\text{ if }  a\in G_i, b\in G_j \text{ and } i<j
\end{array}\right. .$$

Then $(\Sigma_A^\N,\bullet)$, induced as in Definition \ref{canonical_oper}, is a topological inverse semigroup.
\end{ex}

%====================================================================
\subsection{1-block inverse semigroup shifts induced from group operations}
%======================================================================
\label{algebraiccharacterization}
In this section whenever $G$ is a group we will denote by $1_G$ its identity. Below we characterize the 1-block operations arising from a group multiplication on the alphabet. We begin by summarizing the properties of the shift space that we know so far, and then prove that these properties characterize such shifts in the one-sided case.

\begin{prop}\label{continuityGroup}
Suppose that $(G,\cdot)$ is an infinite countable group and let $\bullet$ be the operation on $\Sigma_G^\NZ$ induced from $\cdot$. Then the following are true:
\begin{enumerate}\addtolength{\itemsep}{-0.5\baselineskip}

\item $(\Sigma_G^\NZ,\bullet)$ is an inverse monoid with zero;

\item The sequence $e^\infty:=(\ldots 1_G1_G1_G\ldots)$ is the identity of $(\Sigma_G^\NZ,\bullet)$;

\item The empty sequence $e^{-\infty}:=\O$ is the zero of $(\Sigma_G^\NZ,\bullet)$;

\item $(\Sigma_G^\NZ,\bullet)$ has no divisors of zero;

\item $E(\Sigma_G^\NZ)$ (the set of idempotents in $\Sigma_G^\NZ$) is a linearly ordered set:
\[
E(\Sigma_G^\NZ) = \{e^{-\infty}\leqslant \ldots \leqslant e^n \leqslant e^{n+1} \leqslant \ldots \leqslant e^\infty:\ n\in\NZ\}
\]
where for all $n\in\NZ$ the sequence $e^n$ is such that $l(e^n)=n$ and $e^n_i=1_G$ for all $i\leq n$;

\item For each $n\in\NZ\cup\{\pm\infty\}$ the idempotent $e^n\in E(\Sigma_G^\NZ)$ is such that its $\mathcal{R}$-class and $\mathcal{L}$-class coincide with the set $\Sigma_{G\ n}^\NZ:=\{x\in\Sigma_G^\NZ:\ l(x)=n\}$. Hence $(\Sigma_{G\ n}^\NZ,\bullet)$ is a topological group;

\item For all $e\in E(\Sigma_G^\NZ)$ and $x\in \Sigma_G^\NZ$, we have that $e\bullet x = x\bullet e$;

\item Denote by $\Sigma_{G\ 1}^\NZ$ the set of all words with length 1 in $\Sigma_G^\NZ$. If $\NZ=\N$, then $(\Sigma_{G\ 1}^\NZ,\bullet)$ is isomorphic to $(G,\cdot)$. If $\NZ=\Z$, then $\big(\pi(\Sigma_{G\ 1}^\NZ),\bullet\big)$ is isomorphic to $(G,\cdot)$, where $\pi$ is the projection on the non-negative coordinates;

\item The shift $\sigma$ is a surjective semigroup homomorphism such that $\sigma(e^{-\infty})=e^{-\infty}$, $\sigma(e^\infty)=e^\infty$, and  $\sigma(e^n) = e^{n-1}$ for all $n\in\NZ$ (with the convention that if $\NZ=\N$, then $e^0:=e^{-\infty}$);

\item If $x\in \Sigma_G^\NZ$ is such that $e\bullet x = e$ for some $e\in E(\Sigma_G^\NZ)\setminus\{\O\}$ and $\sigma(x)$ is an idempotent, then $x$ is also an idempotent and $e\leq x$;

\item The sequence $(e^n)_{n\in\N}$ converges to $e^\infty$.
\end{enumerate}
\end{prop}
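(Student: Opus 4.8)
The plan is to verify the convergence directly against the basis of the topology. Since the generalized cylinders of the forms \eqref{cylinder1} and \eqref{cylinder2} form a basis for $\Sigma_G^\NZ$, it suffices to show that for every generalized cylinder $Z$ containing $e^\infty$ there is an $N$ with $e^n\in Z$ for all $n\ge N$; any open neighbourhood of $e^\infty$ contains such a $Z$, and hence will contain a tail of $(e^n)_{n\in\N}$. I would therefore split the verification according to the two types of generalized cylinder. Before doing so I would isolate one structural fact: if $x$ is a finite sequence with $l(x)=k$, then $x_i\in G$ (and not the empty letter $\o$) for every $i\le k$, since by definition the empty letter can only appear after the last non-empty coordinate. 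This is what makes a coordinatewise comparison of such an $x$ against $e^\infty$, all of whose entries are $1_G$, meaningful. I will also use repeatedly that $(e^n)_i=1_G$ for all $i\le n$ and $(e^n)_i=\o$ for $i>n$.

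For a cylinder of the form $Z=Z(x,F)$ with $k:=l(x)$, the membership $e^\infty\in Z$ can simply be read off from the defining conditions: $(e^\infty)_i=x_i$ for all $i\le k$ forces $x_i=1_G$ for every $i\le k$ (so $x=e^k$), and $(e^\infty)_{k+1}=1_G\notin F$. I would then check the two defining conditions of $Z(x,F)$ for $e^n$ when $n\ge k+1$: its entries in positions $i\le k$ are all $1_G=x_i$, and its $(k+1)$-st entry is $1_G\notin F$. Hence $e^n\in Z$ for all $n\ge k+1$.

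For a cylinder of the form $Z=Z^c(x^1,\dots,x^m)$, the hypothesis $e^\infty\in Z$ means $e^\infty\notin Z(x^j)$ for each $j$, so for each $j$ there is a coordinate $i_j\le k_j:=l(x^j)$ with $x^j_{i_j}\ne 1_G$; by the structural remark $x^j_{i_j}$ is a genuine element of $G$. Setting $N:=\max_j k_j$, for every $n\ge N$ one has $(e^n)_{i_j}=1_G\ne x^j_{i_j}$ (since $i_j\le k_j\le N\le n$), whence $e^n\notin Z(x^j)$ for every $j$ and therefore $e^n\in Z$. Combining the two cases establishes the claim, and the argument is uniform in $\NZ\in\{\N,\Z\}$. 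I do not expect a real obstacle here; the only point needing care is the two-sided reading of a finite sequence $x$ of length $k$, where the condition $y_i=x_i$ for all $i\le k$ actually pins down infinitely many (arbitrarily negative) coordinates. This causes no trouble, because those coordinates are themselves forced to equal $1_G$ by $e^\infty\in Z(x,F)$, and $e^n$ agrees with $e^\infty$ in every coordinate $\le n$. It is also worth noting that no neighbourhood of $e^\infty$ can exclude the letter $1_G$ (membership forces $1_G\notin F$), which is precisely what allows the estimate to close.
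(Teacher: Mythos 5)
Your proposal establishes only item (xi) of the proposition: the convergence of $(e^n)_{n\in\N}$ to $e^\infty$. That part is correct, and in fact more detailed than the paper's own justification, since the paper disposes of the entire proposition with the single remark that all items follow directly from Definition \ref{canonical_oper}. Your case analysis over the two types of generalized cylinders is sound: for $Z(x,F)$ containing $e^\infty$ you correctly deduce that $x=e^k$ and $1_G\notin F$, whence $e^n\in Z(x,F)$ for all $n\geq k+1$; for $Z^c(x^1,\dots,x^m)$ you correctly use that each $x^j$ must differ from $e^\infty$ at some coordinate $i_j\leq l(x^j)$, together with the fact that $e^n$ agrees with $e^\infty$ in every coordinate $\leq n$. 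Your structural remark (a finite sequence has no empty letter at or before position $l(x)$) and your handling of the negative coordinates in the two-sided case are both accurate, and they are exactly the points where care is needed.

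The genuine gap is one of coverage: the statement comprises eleven assertions, and your proposal addresses exactly one. Nothing is said about items (i)--(x): that $\bullet$ is associative and admits unique generalized inverses $x^*=(x_i^{-1})_{i\in\NZ}$, so that $(\Sigma_G^\NZ,\bullet)$ is an inverse monoid; that $e^\infty$ is the identity and $\O$ the zero; that there are no divisors of zero (which follows from the observation that $l(x\bullet y)=\min\{l(x),l(y)\}$); the determination of $E(\Sigma_G^\NZ)$ and its linear order; the computation of the Green's classes $L(e^n)=R(e^n)=\Sigma_{G\ n}^\NZ$ and the claim that each $(\Sigma_{G\ n}^\NZ,\bullet)$ is a topological group; the commutation $e\bullet x=x\bullet e$ for idempotent $e$; the isomorphism of the length-one words with $(G,\cdot)$; the fact that $\sigma$ is a surjective homomorphism with the stated action on idempotents; and the implication in item (x). These are routine coordinatewise verifications from Definition \ref{canonical_oper} --- which is precisely why the paper compresses them into one line --- but a proof of the proposition must at least record them; as written, your proposal is a complete proof only of the final item.
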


\begin{proof}
All the above results follow directly from Definition \ref{canonical_oper}.
\end{proof}

We now show that the properties presented in Proposition \ref{continuityGroup} are sufficient conditions over an inverse semigroup so it can be embedded in a one-sided full shift over a infinite countable group alphabet.

\begin{prop}\label{chains1}
Suppose that $(S,\cdot)$ is an inverse monoid with 0. Suppose further that

\begin{description}\addtolength{\itemsep}{-0.5\baselineskip}
\item[\em 1.] $S$ has no zero divisors;
\item[\em 2.] The set of idempotents is a countable linearly ordered set
\[
E(S)=\{0 = e_0 \leqslant e_1 \leqslant e_2 \leqslant \cdots \}\cup\{e_\infty=1\},
\] where, as usual, $e_i \leq e_j$ means $e_i\cdot e_j = e_i$, and $e_\infty$ is the unique upper bound of $\{e_i\}_{i\geq 0}$ in $S$;
\item[\em 3.] There exists a surjective homomorphism $T: S \to S$ such that $T(e_i) = e_{i-1}$, for $1\leq i< \infty$;
\item[\em 4.] For all $s\in S$, $e_1\cdot s = s \cdot e_1$;
\item[\em 5.] If $s\in S$ is such that $T(s)$ is an idempotent and $e_1\cdot s = e_1$, then $s$ is also an idempotent.

\end{description}
Let $\hat S:= S\setminus\{0\}$. Then $(e_1\cdot\hat S,\ \cdot)$ is a group and the map $\theta: S \to \Sigma_{e_1\cdot\hat S}^\N$, defined by
\[
\theta(s) = \begin{cases}(e_1\cdot s, e_1\cdot T(s), e_1\cdot T^2(s), \dots) &\text{if } T^i(s) \neq e_0 \text{ for all } i\\
(e_1\cdot s, e_1\cdot T(s), \dots, e_1\cdot T^i(s),\o,\o,\ldots) &\text{if } T^{i}(s) \neq e_0 \text{ and } T^{i+1}(s) = e_0\\
\O & \text{if }  s = e_0
\end{cases},
\]
is an injective semigroup homomorphism between $(S,\cdot)$ and $(\Sigma_{e_1\cdot\hat S}^\N,\bullet)$ such that $\sigma\circ\theta = \theta\circ T$, where $\bullet$ is induced from the operation $\cdot$ on $e_1\cdot\hat
S$.
\end{prop}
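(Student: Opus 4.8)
The plan is to dispose of the structural and computational parts quickly and to concentrate the real work on injectivity. As preliminary observations I would record that $T(0)=0$ (the zero is unique and $T$ is a surjective homomorphism), that $ss^*=0$ forces $s=0$ (since $s=ss^*s$), and the dictionary between the order and the dynamics: if $ss^*=e_k$ with $k$ finite then $T^i(s)T^i(s)^*=e_{k-i}$, so $T^i(s)\neq 0$ for $i<k$ while $T^k(s)=0$; hence $l(\theta(s))=k$, whereas $ss^*=e_\infty=1$ corresponds to $\theta(s)$ having infinite length.

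Next I would show that $(e_1\cdot\hat S,\cdot)$ is a group with identity $e_1$. Using centrality of $e_1$ (condition 4) and the absence of zero divisors, $(e_1 s)(e_1 t)=e_1(s\cdot t)\in e_1\cdot\hat S$ gives closure; $e_1$ is a two-sided identity by centrality; and $e_1 s^*$ inverts $e_1 s$, because $(e_1 s)(e_1 s^*)=e_1(ss^*)$ with $ss^*$ an idempotent $\geq e_1$ (it is nonzero as $s\neq 0$), so $e_1(ss^*)=e_1$. With the alphabet in hand, well-definedness of $\theta$ into $\Sigma_{e_1\cdot\hat S}^\N$ is immediate (each nonzero coordinate $e_1 T^i(s)$ lies in the alphabet, and $T(0)=0$ forces the ``once empty, always empty'' shape), the intertwining $\sigma\circ\theta=\theta\circ T$ is just reindexing, and the homomorphism property $\theta(s\cdot t)=\theta(s)\bullet\theta(t)$ follows coordinatewise from $(e_1 T^i s)(e_1 T^i t)=e_1 T^i(s\cdot t)$ (again by centrality), with the empty-letter bookkeeping matching because there are no zero divisors.

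The main obstacle is injectivity, which is where conditions 2 and 5 must do the work, and I would isolate two algebraic lemmas. First, the standard fact that in any inverse semigroup $a\leq b$ together with $aa^*=bb^*$ forces $a=b$. Second, a one-step rigidity lemma: if $e_1 a=e_1 b$, $T(a)=T(b)$, and $aa^*=bb^*$, then $a=b$. To prove the latter I set $u=a^*b$ and compute $T(u)=T(a)^*T(a)$, an idempotent, while $e_1 u=e_1 a^* a=e_1$ (centrality, plus $a^*a\geq e_1$ for $a\neq 0$); condition 5 then yields that $u$ is idempotent, so $b=au\leq a$, and the first lemma finishes it.

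Given these, injectivity splits according to length. Suppose $\theta(s)=\theta(t)$, so $ss^*=tt^*=e_k$ for a common $k\in\N\cup\{\infty\}$. For finite $k$ I would induct on $k$: applying $\sigma$ gives $\theta(T(s))=\theta(T(t))$ with common length $k-1$, so $T(s)=T(t)$ by induction, and since $e_1 s=e_1 t$ from the zeroth coordinate the rigidity lemma yields $s=t$ (the base case $k=0$ being $s=t=0$). For $k=\infty$ I cannot descend, so I pass to $u=s^*t$ and check that $e_1 T^i(u)=e_1$ for all $i$ (so in particular $uu^*=1$). I would then prove, by induction on $n$, that every $v$ with $e_1 T^i(v)=e_1$ for all $i$ satisfies $e_n\cdot v=e_n$; the inductive step applies the rigidity lemma to $a=e_{n+1}v$ and $b=e_{n+1}$, its hypothesis $T(a)=T(b)$ coming from the inductive hypothesis applied to $T(v)$ and its hypothesis $aa^*=bb^*=e_{n+1}$ from $vv^*=1$. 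Applying this to $v=u$, the relation $e_n\cdot u=e_n$ means $e_n\leq u$ for every $n$, so $u$ is an upper bound of $\{e_i\}_{i\geq 0}$ and hence $u=e_\infty=1$ by condition 2; since $ss^*=1$ this gives $t=ss^*t=su=s$. I expect the infinite-length case—and in particular the realization that condition 2 (uniqueness of the upper bound in all of $S$, not merely in $E(S)$) is exactly what stands in for a missing topological limit—to be the subtle point.
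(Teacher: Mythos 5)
Your proposal is correct, and while its structural parts (the group structure on $e_1\cdot\hat S$, multiplicativity of $\theta$, the intertwining $\sigma\circ\theta=\theta\circ T$) coincide with the paper's, your injectivity argument follows a genuinely different route. The paper first proves the structural fact that $t\cdot t^*=t^*\cdot t=e_n$ whenever $l(\theta(t))=n$, so that the $\mathcal{L}$- and $\mathcal{R}$-classes are exactly the level sets of the length and $S$ decomposes into a disjoint union of groups; injectivity then reduces to showing that each group homomorphism $\left.\theta\right|_{L(e_n)}$ has trivial kernel, which is done by a downward recursion with hypothesis 5 (finite $n$) and by the unique-upper-bound property (infinite $n$). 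You bypass Green's relations entirely: your dictionary is only the $\mathcal{R}$-half of that structural fact, and the kernel computation is replaced by a two-element rigidity lemma (a single application of hypothesis 5 produces the idempotent $u=a^*b$, and the natural partial order finishes), which you then iterate by induction on the length, respectively on $n$. The essential ingredients are the same in both proofs --- centrality of $e_1$, hypothesis 5 as the engine, uniqueness of the upper bound $e_\infty$ --- and your $u=s^*t$ in the infinite case is exactly the paper's kernel element in disguise; but your route is more self-contained, and it has the virtue of making fully explicit a step the paper leaves terse: the paper's assertion that $e_1\cdot T^k(s)=e_1$ for all $k$ ``means'' $e_{k+1}\cdot s=e_{k+1}$ requires precisely the recursion your rigidity-lemma induction supplies. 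Conversely, the paper's route yields as a by-product equation \eqref{LRclassgroup}, i.e.\ that $S$ is a disjoint union of groups, which it records separately as Corollary \ref{chains2}; your argument does not produce this. One small omission to repair: alongside $T(e_0)=e_0$ you also need $T(e_\infty)=e_\infty$ (the same one-line surjectivity argument, carried out in the paper) so that your dictionary can conclude that $s\cdot s^*=e_\infty$ forces $\theta(s)$ to have infinite length.
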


\begin{proof}
We show that since $T$ is a surjective homomorphism, we have $T(e_0)=e_0$ and
$T(e_\infty)=e_\infty$. Take $a\in S$ and find $b\in S$ such that $T(b)=a$, and calculate $T(e_0)\cdot a=T(e_0)\cdot T(b)=T(e_0\cdot b)=T(e_0)$ and $T(e_\infty)\cdot a=T(e_\infty)\cdot T(b)=T(e_\infty\cdot b)=T(b)=a$. Since
the zero and identity are unique in an inverse semigroup, $T(e_0)=e_0$ and $T(e_\infty)=e_\infty$.

Denote $\Lambda:=\theta(S)\subseteq \Sigma_{e_1\cdot \hat S}^\N$. This set is invariant under the shift map and we have that $\LA=e_1\cdot \hat S$ and $\cdot$ is the operation on $\LA$ which induces
$\bullet$.

Let us show that $e_1\cdot \hat S$ is a group (under the operation inherited from $S$). For any $e_1\cdot s \in e_1\cdot \hat S$,
$$(e_1\cdot s)\cdot (e_1\cdot s)^* = e_1\cdot s\cdot s^*\cdot e_1 =e_1,$$
 where the second equality follows from the fact that $s\cdot s^*\neq e_0$ (due to hypothesis 1) and that $e_1$ is the smallest nonzero idempotent (hypothesis 2). A similar computation holds for $(e_1\cdot s)^*(e_1\cdot
 s)$. Furthermore, it is straightforward to check that $e_1$ is the identity for elements of $e_1\cdot\hat S$ and hence $e_1\cdot\hat S$ is a group.

Suppose now that $s, t \in \hat S$ and that $l(\theta(s\cdot t)) = \infty$. Then, using hypothesis 4, we get that
\begin{eqnarray*}
\theta(s\cdot t) & = & (e_1\cdot s\cdot t, e_1\cdot T(s\cdot t), e_1\cdot T^2(s\cdot t), \dots)\\
          & = & (e_1\cdot s\cdot t, e_1\cdot T(s)\cdot T(t), e_1\cdot T^2(s)\cdot T^2(t), \dots)\\
          & = & (e_1\cdot e_1\cdot s\cdot t, e_1\cdot e_1\cdot T(s)\cdot T(t), e_1\cdot e_1\cdot T^2(s)\cdot T^2(t), \dots)\\
          & = & (e_1\cdot s\cdot e_1\cdot t, e_1\cdot T(s)\cdot e_1 \cdot T(t), e_1\cdot T^2(s)\cdot e_1\cdot T^2(t), \dots).
\end{eqnarray*}
None of the entries of $\theta(s\cdot t)$ are zero, and so $T^i(s), T^i(t) \neq e_0 $ for any $i$ (due to hypothesis 1). Hence $l(\theta(s)) = l(\theta(t)) = \infty$. Furthermore,
\begin{eqnarray*}
\theta(s)\bullet \theta(t) & = & (e_1\cdot s, e_1\cdot T(s), e_1\cdot T^2(s), \dots)\bullet (e_1\cdot t, e_1\cdot T(t), e_1\cdot T^2(t), \dots)\\
          & = & (e_1\cdot s\cdot e_1\cdot t, e_1\cdot T(s)\cdot e_1\cdot T(t), e_1\cdot T^2(s)\cdot e_1\cdot T^2(t), \dots)\\
          & = & \theta(s\cdot t).
\end{eqnarray*}
Similar considerations in the case where $l(\theta(s\cdot t)) < \infty$ show that $\theta$ is a multiplicative map. It is also immediate that $\sigma\circ\theta = \theta \circ T$.

Now we will prove that $\theta$ is injective. A key step to do this is to characterize the $\mathcal{L}$ and $\mathcal{R}$ classes of an element $s\in S$. We do this below.

Let $t\in S$ and suppose that that $l(\theta(t))=n$. If $n=0$, then if follows directly fom the definition of $\theta$ that $t=e_0$ and therefore $t^*=e_0$ and $t\in L(e_0)\cap R(e_0)$. If $1\leq n<\infty$, $n$ is the
smallest integer such that $T^n(t)=e_0$, then it is also the smallest integer such that $T^n(t^*)=e_0$ and, from hypothesis 1, we get that it is also the smallest integer such that $T^n(t\cdot t^*)=T^n(t^*\cdot t)=e_0$. So,
since both $\theta(t\cdot t^*)$ and $\theta(t^*\cdot t)$ are idempotents in $\Sigma_{e_1\cdot \hat S}^\N$ with length $n$, and $e_n$ is the unique idempotent of $S$ whose image under $\theta$ has length $n$, we get that
$t\cdot t^*=t^*\cdot t=e_n$ and hence $t \in L(e_n) \cap R(e_n)$. Finally, if $t\in S$ is such that $l(\theta(t))=\infty$, then $T^i(t)\neq e_0$ and $T^i(t^*)\neq e_0$ for all $i\in\N$, and again from hypothesis 1 we have
that  $T^i(t\cdot t^*)\neq e_0$ and $T^i(t^*\cdot t)\neq e_0$ for all $i\in\N$. Once again, $e_\infty$ is the unique idempotent of $S$ whose image under $\theta$ has length $\infty$ and therefore $t\cdot t^*=t^*\cdot
t=e_\infty$ and $t\in L(e_\infty)\cap R(e_\infty)$.

What we showed above can be summarized by saying that, for all $t\in S$,
\begin{equation}\label{LRclassgroup}
L(t)=R(t)=\{s\in S:\ l(\theta(s))=l(\theta(t))\}
\end{equation}
and $(L(t),\cdot)=(R(t),\cdot)$ is a group.

Getting back to the task of showing that $\theta$ is injective, notice that $S=\bigcup_{e\in E(S)}L(e)$, that is, $S$ a union of disjoint groups. Therefore, for each $e\in E(S)$, the map $\left.\theta\right|_{L(e)}:L(e)\to
\theta(L(e))$ is a group homomorphism and so it is enough to prove that $\left.\theta\right|_{L(e)}$ is injective for each $e\in E(S)$.

Let $e_n \in E(s)$ (where $n$ may be infinite). Notice that the identity in $\theta(L(e_n))$ is the sequence
$$I_n:=\theta(e_n)=(e_1\cdot e_n, e_1\cdot T(e_n), e_1\cdot T^2(e_n),\ldots, e_1\cdot T^{n-1}(e_n))=(\underbrace{e_1,\ldots,e_1}_{n\ times},\o,\o,\dots).$$\\
Now, suppose $n<\infty$ and $s\in L(e_n)$ such that $\theta(s)=I_n$. Then for all $0\leq k\leq n-1$ we have that $e_1\cdot T^k(s)=e_1$ and hence $T^{n-1}(s)=T^{n-1}(e_n\cdot s)=T^{n-1}(e_n)\cdot T^{n-1}(s)=e_1\cdot
T^{n-1}(s)=e_1$. Furthermore, notice that $e_1\cdot T^{n-2}(s)=e_1$ and $T(T^{n-2}(s))=T^{n-1}(s)=e_1$ and so, by hypothesis 5, we have that  $T^{n-2}(s)$ is an idempotent. Since $T(T^{n-2}(s))=T^{n-1}(s)=e_1$ it follows that
$T^{n-2}(s)=e_2$. Proceeding recursively, we obtain that $T^{n-k}(s)=e_k$ for all $1\leq k\leq n$ and hence $s=e_n$ and $\left.\theta\right|_{L(e_n)}$ is injective as desired.

On the other hand, if $s\in L(e_\infty)$ is such that $\theta(s)=I_\infty$, then for all $0\leq k<\infty$ we have that $e_1\cdot T^k(s)=e_1$ which means that $e_{k+1}\cdot s=e_{k+1}$. Hence, for all $k\geq 1$ it follows that $e_k\leq s$, and since $e_\infty$ is the unique upper bound of $\{e_i\}_{i\geq 0}$ it means that $s=e_\infty$.

\end{proof}

Note that part of the proof of Proposition~\ref{chains1} (culminating in \eqref{LRclassgroup}) can be used to get an extension of Theorem 7.5 in \cite[pg. 41]{Clifford1967}, for the case when $E(S)$ is not finite but countable:

\begin{cor}\label{chains2}
Let $(S,\cdot)$ be an inverse monoid with 0 which verifies hypotheses 1-4 of Proposition~\ref{chains1}. Then $S$ is an union of disjoint groups.\end{cor}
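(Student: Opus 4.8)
The plan is to show that $S$ decomposes as a disjoint union of its maximal subgroups by re-using, almost verbatim, the portion of the proof of Proposition~\ref{chains1} that establishes \eqref{LRclassgroup}. Recall that in any inverse semigroup the $\mathcal{L}$-classes partition $S$, that the $\mathcal{L}$-class $L(e)$ of an idempotent $e$ contains $e$ as its unique idempotent, and that the maximal subgroup at $e$ is $L(e)\cap R(e)=\{s\in S:\ s\cdot s^*=s^*\cdot s=e\}$. Hence $L(e)$ is itself a group exactly when $L(e)=R(e)$, in which case it coincides with that maximal subgroup. Consequently, to prove the corollary it suffices to show that $L(t)=R(t)$ for every $t\in S$ and that this common set is a group; the statement then follows from $S=\bigcup_{e\in E(S)}L(e)$, the union being disjoint since each $s\in S$ lies in $L(s^*\cdot s)$ and distinct $\mathcal{L}$-classes are disjoint.

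First I would check that hypotheses 1--4 already permit every step of the proof of Proposition~\ref{chains1} up to and including \eqref{LRclassgroup}. Indeed, the construction of the map $\theta:S\to\Sigma_{e_1\cdot\hat S}^\N$, the verification that $(e_1\cdot\hat S,\cdot)$ is a group, the fact that $\theta$ is multiplicative and intertwines $T$ with $\sigma$, and the characterization of the Green's classes through the length function $l(\theta(\cdot))$ all rely only on the absence of zero divisors (1), on the linearly ordered countable $E(S)$ with top element $e_\infty=1$ (2), on the surjective homomorphism $T$ with $T(e_i)=e_{i-1}$ (3), and on the centrality of $e_1$ (4). In particular, the argument that an element $t$ with $l(\theta(t))=n$ (with $n$ possibly infinite) satisfies $t\cdot t^*=t^*\cdot t=e_n$, and therefore lies in $L(e_n)\cap R(e_n)$, is entirely independent of hypothesis 5. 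Hypothesis 5 enters the proof of Proposition~\ref{chains1} only afterwards, in the injectivity argument for $\theta|_{L(e_n)}$, which plays no role here.

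Running that portion of the proof then yields, for every $t\in S$,
\[
L(t)=R(t)=\{s\in S:\ l(\theta(s))=l(\theta(t))\},
\]
and this common class is a group. Since the $\mathcal{L}$-classes are pairwise disjoint and exhaust $S$, we conclude that $S=\bigcup_{e\in E(S)}L(e)$ is a disjoint union of groups, which is the desired countable analogue of Theorem 7.5 in \cite{Clifford1967}. I expect the only delicate point to be bookkeeping: confirming that no step feeding into \eqref{LRclassgroup} secretly invokes hypothesis 5, and verifying that the identifications $L(e_n)=R(e_n)=\{s:\ l(\theta(s))=n\}$ hold uniformly in $n$, including the case $n=\infty$ corresponding to the group of units $L(e_\infty)$.
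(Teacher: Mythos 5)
Your proposal is correct and is essentially identical to the paper's own proof: the paper obtains Corollary~\ref{chains2} precisely by observing that the portion of the proof of Proposition~\ref{chains1} culminating in \eqref{LRclassgroup} uses only hypotheses 1--4 (hypothesis 5 enters only afterwards, in the injectivity argument for $\theta|_{L(e_n)}$), so that $S=\bigcup_{e\in E(S)}L(e)$ is a disjoint union of groups. Your extra bookkeeping --- checking step by step that nothing feeding into \eqref{LRclassgroup} invokes hypothesis 5, uniformly in $n$ including $n=\infty$ --- is exactly the verification the paper leaves implicit.
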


\begin{rmk} Note that the image of $\theta$ in Proposition \ref{chains1} is invariant by the shift map, but it is not necessarily a shift space. Furthermore, both Proposition \ref{chains1} and Corollary \ref{chains2} are still
true if either $S$ is an inverse semigroup but not an inverse monoid (in which case the image of $\theta$ does not contain sequences of infinity length) or if $E(S)=\{e_0,\ldots, e_k\}$, for some $k\geq 0$, and $T$ is not surjective
(in which case the image of $\theta$ contains only sequences of length less or equal than $k$).
\end{rmk}

%============================================================================================
\subsection{General properties of 1-block inverse semigroup shifts induced from group operations}
%============================================================================================
\label{generalproperties1block}
From now on $(G,\cdot)$ will be a group with infinite cardinality and with identity $1_G$, and $\Lambda\subset\Sigma_G^\NZ$ will be a shift space such that $(\Lambda,\bullet)$ is a inverse subsemigroup of the 1-block inverse semigroup
$(\Sigma_G^\NZ,\bullet)$, where $\bullet$ is induced from $\cdot$.\\

\begin{prop}\label{group-inversesemigroup} Let $(G,\cdot)$ be a group with infinite elements, $(\Lambda,\bullet)\subset(\Sigma_G^\NZ,\bullet)$ be an inverse semigroup shift and let $e^\infty, e^{-\infty}$ be as in Proposition \ref{continuityGroup}. Then:

\begin{enumerate}

\item $(\Lambda,\bullet)$ is a group if one of the following conditions holds:

\begin{enumerate}

\item $\NZ=\N$ and $|\LA|<\infty$;

\item $\NZ=\Z$ and $|\Lambda|<\infty$.

\end{enumerate}

\item $(\Lambda,\bullet)$ is an inverse monoid with zero and without divisors of zero if neither i.(a) nor i.(b) holds.

\end{enumerate}

Furthermore, in case i. above we have that $E(\Lambda)=\{e^\infty\}$, while in case ii. we have two possibilities, namely, $E(\Lambda)=\{e^\infty\ ,\ e^{-\infty}\}$, if $\Lambda$ is row-finite, and $E(\Lambda)=E(\Sigma_G^\NZ)$, if $\Lambda$ is not row-finite.

\end{prop}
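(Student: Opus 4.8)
The statement splits according to whether $\Lambda$ has ``bounded length'' (cases i.(a),(b)) or not (case ii), so the plan is to first establish a dichotomy on the set of lengths $\{l(x):x\in\Lambda\}$ and then read off the idempotent structure in each case. The key observation is that, since $\bullet$ is the restriction of the $1$-block operation on $\Sigma_G^\NZ$ and $\s$ is a semigroup homomorphism, all the algebraic facts of Proposition \ref{continuityGroup} pass to the subsemigroup $\Lambda$: in particular $x\bullet x^*=x^*\bullet x=e^{l(x)}$ (the idempotent of the same length as $x$), so $E(\Lambda)=\{e^n: e^n\in\Lambda\}\subseteq E(\Sigma_G^\NZ)$, and the idempotents of $\Lambda$ are determined entirely by which lengths occur.

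\textbf{Step 1 (the group cases).} First I would treat case i. In situation i.(a), $\NZ=\N$ and $|\LA|<\infty$; by the infinite extension property (Definition \ref{2-sided_shifts}) a finite sequence $x$ with $l(x)=k<\infty$ would require $|\F_1(\Lambda,(x_i)_{i\leq k})|=\infty$, impossible when $\LA$ is finite, and $\O\notin\Lambda^{\text{fin}}$ precisely because $|A|<\infty$ (the remark after the infinite extension property). Hence $\Lambda=\Lambda^{\text{inf}}$, every $x$ has $l(x)=\infty$, and $x\bullet x^*=e^\infty$ for all $x$. In situation i.(b), $\NZ=\Z$ and $|\Lambda|<\infty$: the remark states $\O\in\Lambda^{\text{fin}}$ iff $|\Lambda|=\infty$, so $\O\notin\Lambda$, and a finite length $k\in\Z$ would force infinitely many one-step extensions (again by the infinite extension property), contradicting $|\Lambda|<\infty$; so again $\Lambda=\Lambda^{\text{inf}}$. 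In both subcases the unique idempotent is $e^\infty$, so $E(\Lambda)=\{e^\infty\}$, and since in an inverse semigroup $x\bullet x^*=x^*\bullet x=e^\infty$ means $e^\infty$ is a two-sided identity, $\Lambda$ is a group.

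\textbf{Step 2 (the monoid-with-zero case).} If neither i.(a) nor i.(b) holds, I claim $\O\in\Lambda$. When $\NZ=\N$, failure of i.(a) means $|\LA|=\infty$, and the remark gives $\O\in\Lambda^{\text{fin}}$; when $\NZ=\Z$, failure of i.(b) means $|\Lambda|=\infty$, and the remark again gives $\O\in\Lambda^{\text{fin}}$. Thus $e^{-\infty}=\O\in E(\Lambda)$ is the zero of $\Lambda$. Since $\Lambda$ inherits $e^\infty$ as identity from $\Sigma_G^\NZ$ (it lies in $\Lambda^{\text{fin}}$'s closure / is present because some infinite sequence exists, so $e^\infty=x\bullet x^*$ for any infinite-length $x$), $\Lambda$ is an inverse monoid with zero; absence of zero divisors is inherited directly from Proposition \ref{continuityGroup}.iv since $\Lambda\subseteq\Sigma_G^\NZ$. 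For the idempotent count: $E(\Lambda)=\{e^n : n \text{ is the length of some element of }\Lambda\}$. If $\Lambda$ is row-finite then no nontrivial \emph{finite} length can occur except via $\O$ itself — more precisely, the infinite extension property together with row-finiteness forbids finite sequences of positive length, so the only finite element is $\O$ and $E(\Lambda)=\{e^\infty,e^{-\infty}\}$. If $\Lambda$ is not row-finite, some letter $a$ has $|\F_1(\Lambda,a)|=\infty$, which by the infinite extension property produces finite sequences of every intermediate length by shifting, and hence $e^n\in\Lambda$ for all $n\in\NZ$, giving $E(\Lambda)=E(\Sigma_G^\NZ)$.

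\textbf{Main obstacle.} The delicate point I expect to fight with is Step 2's idempotent classification — specifically the two implications ``row-finite $\Rightarrow$ only finite element is $\O$'' and ``not row-finite $\Rightarrow$ all lengths occur.'' These require carefully combining the infinite extension property (which links finiteness of a sequence to infinitude of its follower set) with shift-invariance $\s(\Lambda)=\Lambda$ to manufacture or rule out $e^n$ as an honest element of $\Lambda$ (not merely a point of $\Sigma_G^\NZ$), and to check that the $e^n$ produced are genuinely idempotents of the form $(1_G\ldots 1_G)$ rather than some other length-$n$ word. Getting the quantifiers right here, rather than the routine algebra, is where the real work lies.
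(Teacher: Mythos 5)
Your proposal follows the paper's own proof essentially step for step: the identity $x\bullet x^*=e^{l(x)}$ reduces computing $E(\Lambda)$ to determining which lengths occur in $\Lambda$; the zero and the identity come from the remark about $\O$ and from $e^\infty=x\bullet x^*$ for an infinite sequence $x$; absence of zero divisors is inherited from $(\Sigma_G^\NZ,\bullet)$; and the row-finite half of case ii is settled by the containment $\F_1\big(\Lambda,(x_i)_{i\leq l(x)}\big)\subseteq\F_1(\Lambda,x_{l(x)})$. Cases i.(a), i.(b) and the row-finite part of case ii are correct. (One small caveat in i.(a): the remark you invoke is phrased for the ambient alphabet $A$, and here $A=G$ is infinite by hypothesis, so read literally it would put $\O$ \emph{into} $\Lambda$; it must be applied with $\LA$ in place of $A$, or equivalently one argues directly that finiteness of $\LA$ together with density of $\Lambda^{\text{inf}}$ excludes $\O$.)

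The genuine gap is exactly where you predicted it: the claim that ``not row-finite'' forces finite sequences (hence all the $e^n$) into $\Lambda$, and the gap occurs only when $\NZ=\Z$. For $\NZ=\N$ your argument is right: the word attached to a one-letter finite sequence is that letter itself, so the infinite extension property converts $|\F_1(\Lambda,a)|=\infty$ directly into a nonempty finite sequence of $\Lambda$, and then $\s(\Lambda)=\Lambda$ together with $y\bullet y^*=e^{l(y)}$ produces every $e^n$. For $\NZ=\Z$, however, the infinite extension property quantifies over \emph{left-infinite words} $(x_i)_{i\leq l(x)}$, not over letters: a finite sequence lies in $\Lambda$ iff its left-infinite word has infinite follower set. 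A letter with infinitely many followers need not yield any such word, because the followers $b_1,b_2,\dots$ of $a$ may be witnessed by sequences with pairwise distinct left tails, in which case every left-infinite word still has finite follower set and $\Lambda^{\text{fin}}=\{\O\}$. This genuinely happens for two-sided shift spaces --- for instance the closure of the shift orbits of the sequences $y^j=(\dots c_jc_j\,a\,b_j\,c_jc_j\dots)$, with all letters distinct, is not row-finite yet contains no nonempty finite sequence --- so no purely topological argument of the kind you give can close this step; any correct proof would have to exploit closure of $\Lambda$ under $\bullet$, which you never use here. You should know that the paper's own proof is silent at exactly the same spot: it opens the non-row-finite case with ``given a finite sequence $x\in\Lambda^{\text{fin}}\setminus\{\O\}$'', presuming existence rather than proving it. So your write-up reproduces the published argument including its weakest step; the difference is that the justification you attach to that step (the infinite extension property) is valid only in the one-sided case, and is demonstrably insufficient in the two-sided one.
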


\begin{proof}

First, note that in any case, since $\Lambda$ is a shift space, then it contains at least one infinite sequence. Therefore, since it is also an inverse semigroup, given $x=(x_i)_{i\in\NZ}\in\Lambda^{\text{inf}}$ the element $x^*=(x_i^{-1})_{i\in\NZ}$ belongs to $\Lambda$ and then $e^\infty=x\bullet x^*$ also belongs to $\Lambda$.\\

If either i.(a) or i.(b) holds, then $\Lambda$ does not contain any finite sequence and it is a shift space over a finite alphabet. Therefore $E(\Lambda)=\{e^\infty\}$ and, since $(\Lambda,\bullet)$ is an inverse semigroup with an unique idempotent (that is, $e^\infty$), then it is a group shift induced from $(G,\cdot)$ (in particular, under i.(b) we have that $(\Lambda,\bullet)$ is a finite group).\\

Now, if we suppose that neither i.(a) nor i.(b) holds, then $e^{-\infty}=\O\in\Lambda$, that is,  $(\Lambda,\bullet)$ has a zero. Furthermore, since $(\Sigma_G^\NZ,\bullet)$ has no divisors of zero, then neither do $(\Lambda,\bullet)$. Finally, if $\Lambda$ is row-finite, then $\O$ is the unique finite sequence of $\Lambda$, which implies that $E(\Lambda)=\{e^{-\infty},\ e^\infty\}$. On the other hand, if $\Lambda$ is not a row-finite shift, then given a finite sequence $x\in\Lambda^{\text{fin}}\setminus\{\O\}$, with $\l(x)=n\in\NZ$, it follows that $x^*=x=(x_i^{-1})_{i\in\NZ}$ belong to $\Lambda^{\text{fin}}$ and hence $x\bullet x^*=e^n$ belongs to $\Lambda$. Since $\s(\Lambda)=\Lambda$, $e^n\in \Lambda$ implies that $e^k\in\Lambda$ for all $k\in\NZ$ and therefore $E(\Lambda)=E(\Sigma_G^\NZ)$.

\end{proof}

\begin{prop}\label{discontinuousGroup} Let $(\Lambda,\bullet)\subset(\Sigma_G^\NZ,\bullet)$ be a inverse semigroup shift space induced from a group multiplication on $G$. Then $\bullet$ is continuous if, and only if, one of the following conditions holds:
\begin{enumerate}
\item[(a)] $\NZ=\N$ and $|\LA|<\infty$;

\item[(b)] $\NZ=\Z$ and $|\Lambda|<\infty$.
\end{enumerate}
\end{prop}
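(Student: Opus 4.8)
The plan is to prove the two implications separately, and to reorganize the hypotheses around a single clean dichotomy. By Proposition~\ref{group-inversesemigroup}, conditions (a)/(b) are exactly the situations in which $(\Lambda,\bullet)$ falls into case i, i.e.\ it is a group with $E(\Lambda)=\{e^\infty\}$ and hence $\O\notin\Lambda$; in every other case $(\Lambda,\bullet)$ is in case ii, an inverse monoid with zero, so $\O\in\Lambda$. Thus ``neither (a) nor (b) holds'' is equivalent to $\O\in\Lambda$, and my goal reduces to showing that $\bullet$ is continuous precisely in the group case and discontinuous precisely in the zero case. I will treat $\NZ=\N$ and $\NZ=\Z$ uniformly wherever possible.

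First I would settle the continuity direction, assuming (a) or (b). If (a) holds, then $\Lambda$ is a subshift of the finite-alphabet shift $\Sigma_{\LA}^\N$ and $\bullet$ is the $1$-block operation induced by the group operation on the finite set $\LA$; since $\LA\times\LA$ is finite, for every $a\in\LA$ the set $\{(b,c)\in\LA\times\LA:\ b\cdot c=a\}$ is trivially finite, so Theorem~\ref{continuityfinite} yields that $\bullet$ is continuous. If (b) holds, then $\Lambda$ is a finite subset of the Hausdorff space $\Sigma_G^\Z$, hence discrete, so $\Lambda\times\Lambda$ is discrete and $\bullet\colon\Lambda\times\Lambda\to\Lambda$ is automatically continuous. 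This direction is routine.

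For the discontinuity direction I assume neither (a) nor (b), so that $\O\in\Lambda$, and I exhibit a failure of continuity at the point $(\O,\O)$. Two preliminary observations carry the argument. First, the involution $x\mapsto x^*$ is just entrywise inversion (fixing $\o$), so it is the restriction to $\Lambda$ of the relabeling homeomorphism of $\Sigma_G^\NZ$ induced by the bijection $g\mapsto g^{-1}$ of $G$; in particular $*$ is continuous. Second, $\Lambda$ contains $e^\infty$ (taking any $x\in\Lambda^{\text{inf}}$ gives $x\bullet x^*=e^\infty\in\Lambda$), and $e^\infty\neq\O$. Now, by the infinite extension property $\Lambda^{\text{inf}}$ is dense in $\Lambda$, so since $\O\in\Lambda$ I may pick a net $(u^\lambda)\subseteq\Lambda^{\text{inf}}$ with $u^\lambda\to\O$. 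Continuity of $*$ gives $(u^\lambda)^*\to\O^*=\O$, hence $(u^\lambda,(u^\lambda)^*)\to(\O,\O)$ in the product topology of $\Lambda\times\Lambda$. On the other hand each $u^\lambda$ is infinite, so $u^\lambda\bullet(u^\lambda)^*=e^\infty$ for every $\lambda$, whence $u^\lambda\bullet(u^\lambda)^*\to e^\infty\neq\O=\O\bullet\O$. This shows $\bullet$ is not continuous at $(\O,\O)$, and combining both directions completes the proof.

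The step I expect to be delicate is this last, two-sided, discontinuity argument: because $\Sigma_G^\Z$ is not first countable at $\O$ (there is no countable neighborhood basis), one genuinely cannot argue with ordinary sequences and must pass to nets. The point of routing the argument through the identity $u^\lambda\bullet(u^\lambda)^*=e^\infty$ is precisely to avoid ever having to compute the limit of a net of products directly; instead the whole obstruction is reduced to the (easy) continuity of the involution together with the density of $\Lambda^{\text{inf}}$, which hold in both the one- and two-sided settings at once.
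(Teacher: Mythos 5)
Your proof is correct, and in the ``if'' direction it essentially matches the paper's argument: for (a) you invoke Theorem~\ref{continuityfinite}, where the paper simply cites \cite{kitchens}, and for (b) both arguments use discreteness of a finite subset of a Hausdorff space. In the ``only if'' direction you take a genuinely different route. The paper factors the operation through the product shift, writing $\bullet=\Phi\circ\wp$ with $\Phi:\Lambda\boxtimes\Lambda\to\Lambda$, and argues by cases: for $\NZ=\N$ it applies Theorem~\ref{continuityfinite} to the infinite fibre $\{(b,b^{-1}):\ b\in\LA\}$, and for $\NZ=\Z$ it exhibits sequences in $\Lambda\boxtimes\Lambda$ converging to $\O$ whose images under $\Phi$ are constantly $e^\infty$, namely $z^n=\s^n(x,x^*)$ for a non-constant $x\in\Lambda^{\text{inf}}$, or $z^n=(x^n,x^{n*})$ for distinct constant sequences. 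You instead work directly on $\Lambda\times\Lambda$ at the single point $(\O,\O)$: density of $\Lambda^{\text{inf}}$ gives a net $u^\lambda\to\O$ of infinite sequences, continuity of the entrywise involution gives $(u^\lambda,(u^\lambda)^*)\to(\O,\O)$, and $u^\lambda\bullet(u^\lambda)^*=e^\infty\neq\O=\O\bullet\O$. This buys you three things: it treats the one- and two-sided cases uniformly; it never needs the paper's implicit reduction that discontinuity of $\Phi$ on $\Lambda\boxtimes\Lambda$ forces discontinuity of $\bullet$ on $\Lambda\times\Lambda$ (which is true, but only because $\wp$ is a continuous surjection from a compact space onto a Hausdorff one, hence a quotient map --- a point the paper does not address); and it is robust precisely where the paper's two-sided case has a gap: if $x$ is shift-periodic with period $p$, then $\s^{np}(x,x^*)=(x,x^*)$ for all $n$, so $\s^n(x,x^*)$ cannot converge to $\O$, and this situation genuinely occurs (in Example~\ref{Example_F_infinite2} every infinite sequence is periodic, so there every admissible choice of non-constant $x$ breaks the paper's claim that $z^n\to\O$). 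What the paper's route buys in exchange is contact with the sliding-block-code machinery of Theorem~\ref{continuityfinite} and an explicit localization of the failure of continuity inside the product shift; your argument, as you correctly note, must be phrased with nets rather than sequences because $\Sigma_G^\Z$ is not first countable at $\O$, and it is exactly this net formulation, combined with the identity $u\bullet u^*=e^\infty$ for infinite $u$, that lets you bypass all case distinctions.
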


\begin{proof}{\color{white}.}

As seen in the beginning of the proof of Proposition \ref{group-inversesemigroup}, if $\NZ=\N$ and $|\LA|<\infty$ then $\Lambda$ is a group shift over a finite alphabet. Furthermore, $\Lambda$ is a classical shift (see \cite[Remark 2.24]{Ott_et_Al2014}). Therefore, from \cite{kitchens} it follows that $(\Lambda,\bullet)$ is a topological group shift.

If $\NZ=\Z$ and $|\Lambda|<\infty$ then by Proposition \ref{group-inversesemigroup} $(\Lambda,\bullet)$ is a finite group. Furthermore, the topology induced on $\Lambda$ from $\Sigma_G^\NZ$ corresponds to the discrete topology and, therefore, $\bullet$ is a continuous operation.

For the converse, suppose that neither (a) or  (b) holds. Then, by the proof of Proposition \ref{group-inversesemigroup}, $(\Lambda,\bullet)$ is an inverse monoid with zero. Let $\Phi:\Lambda\boxtimes\Lambda\to\Lambda$ be the map defined by $\Phi(x,y):=x\bullet y$, where $(x,y)=\left( (x_i,y_i)\right) \in \Lambda\boxtimes\Lambda$ and $\Phi(\O)=\O$. Notice that, by Proposition \ref{map_wp}, we just need to prove that $\Phi$ is not continuous.

In the case $\NZ=\N$ and $|\LA|=\infty$, since $\{ (b,c)\in \LA \boxtimes \LA: b c =1_G \}=\{
(b,b^{-1})\in \LA \boxtimes \LA: b\in \LA \}\cong \LA$, it follows from Theorem \ref{continuityfinite} that $\Phi$ is not continuous.

Now suppose that $\NZ=\Z$ and $|\Lambda|=\infty$. If there exists a non-constant sequence $x\in\Lambda^{\text{inf}}$, then it follows that $z^0:=(x,x^*)\in(\Lambda\boxtimes\Lambda)^{\text{inf}}$ is such that $z^n:=\s^n(z^0)\to\O$ as $n\to\infty$.
However $\Phi(z^n)=\s^n(x)\bullet\s^n(x^*)=\s^n(x\bullet x^*)=\s^n(e^\infty)=e^\infty$ for all $n\geq 0$. If there are no non-constant sequences in $\Lambda$ then any sequence in $\Lambda$ is constant. Since $|\Lambda|=\infty$, this means that we can take a sequence $(z^n)_{n\geq 1}\in(\Lambda\boxtimes\Lambda)^{\text{inf}}$ such that $z^n:=(x^n,x^{n*})$ and $z^m\neq z^n$ for all $m\neq n$. It follows that $z^n\to \O$ as $n\to\infty$, but again $\Phi(z^n)=x^n\bullet x^{n*}=e^\infty$ for all $n\geq 1$.\\

\end{proof}

For what comes next we need the following definitions.

\begin{defn}\label{sets_of_words_1} Given $\Lambda\subset\Sigma_A^\NZ$ and $n\in\N$, we define $$\mathfrak{B}_n(\Lambda):=B_n(\Lambda)\cap A^n$$ and
$$\mathfrak{B}(\Lambda):=\bigcup_{n\geq 1}\mathfrak{B}_n(\Lambda).$$
\end{defn}

\begin{defn}\label{sets_of_words_2} Given $\Lambda\subset\Sigma_A^\NZ$,  $a\in \mathfrak{B}(\Sigma_A^\NZ)$, and $k\in\N$, we define $$\mathfrak{F}_k(\Lambda,a):=\F_k(\Lambda,a)\cap A^k=\{b\in \mathfrak{B}_k(\Lambda):\ ab\in \mathfrak{B}(\Lambda)\}$$ and
$$\mathfrak{P}_k(\Lambda,a):=\Pp_k(\Lambda,a)\cap A^k=\{b\in \mathfrak{B}_k(\Lambda):\ ba\in\mathfrak{ B}(\Lambda)\}.$$
\end{defn}
We note that words in $B_n(\Lambda), \F_n(\Lambda,a),$ and $\Pp_n(\Lambda,a)$ may contain the empty letter, while those in $\mathfrak{B}_n(\Lambda), \mathfrak{F}_n(\Lambda,a),$ and $\mathfrak{P}_n(\Lambda,a)$ do not.

\begin{defn}\label{sets_of_words_3}
For each $n\in\NZ\cup\{\pm\infty\}$ define $$\Lambda_n:=\{x\in\Lambda:\ l(x)=n\}.$$ We consider on $\mathfrak{B}_n(\Lambda)$ the product topology of $A$, while on $\Lambda_n$ we consider the topology induced from $\Lambda$ (which is homeomorphic to the product topology of $A$ if $\NZ=\N$).
\end{defn}

Due to Propositions \ref{group-inversesemigroup} and \ref{discontinuousGroup}, (for the more interesting cases) when $(\Lambda,\bullet)$ is a group shift, the operation $\bullet$ is not continuous on $\Lambda$ . However, if we consider the restriction of $\bullet$ on $\Lambda_n$ or the piecewise operation on $\mathfrak{B}_n(\Lambda)$ (which we will also denote as $\cdot$), then the following results can be easily proven:

\begin{prop} Let $(\Lambda,\bullet)\subset(\Sigma_G^\NZ,\bullet)$ be an inverse semigroup shift space induced from a group multiplication on $G$. For each $n\in\N$, we have that $(\mathfrak{B}_n(\Lambda),\cdot)$ is a topological group with identity equal to $1_G^n$, the block with all its $n$ entries equal to $1_G$. Furthermore, if $a\in \mathfrak{B}_n(\Lambda)$
then $a^{-1}$ is the block obtained by taking the inverses of each entry of $a$.
\end{prop}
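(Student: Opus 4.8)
The plan is to observe first that the operation $\cdot$ on $\mathfrak{B}_n(\Lambda)$ is nothing but the restriction of the coordinatewise group operation of $G^n$, so that associativity comes for free and the candidate identity $1_G^n$ and the candidate inverse $(a_1^{-1},\ldots,a_n^{-1})$ of $a=(a_1,\ldots,a_n)$ satisfy the group axioms \emph{provided} they actually lie in $\mathfrak{B}_n(\Lambda)$. Thus the whole content of the statement reduces to three membership facts: that $\mathfrak{B}_n(\Lambda)$ is closed under $\cdot$, that $1_G^n\in\mathfrak{B}_n(\Lambda)$, and that $a^{-1}\in\mathfrak{B}_n(\Lambda)$ whenever $a\in\mathfrak{B}_n(\Lambda)$. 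Each of these I would deduce from the two structural features at our disposal: that $\Lambda$ is shift-invariant and closed under $\bullet$ and under the involution $*$, and that $\bullet$ and $*$ act coordinatewise by, respectively, the group operation and the group inversion of $G$ on the non-empty coordinates.

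For the identity, I would invoke the fact established at the start of the proof of Proposition~\ref{group-inversesemigroup} that $\Lambda$ contains an infinite sequence $x$, whence $e^\infty=x\bullet x^*\in\Lambda$; since every coordinate of $e^\infty$ equals $1_G$, its length-$n$ block is $1_G^n$, so $1_G^n\in\mathfrak{B}_n(\Lambda)$. For inverses, if $a$ occurs in some $x\in\Lambda$ on the coordinates $i,\ldots,i+n-1$ (all in $G$), then $x^*\in\Lambda$ because $\Lambda$ is an inverse subsemigroup, and $x^*$ carries the group inverse of each non-empty coordinate of $x$; reading off the same window of $x^*$ yields $(a_1^{-1},\ldots,a_n^{-1})$, again with all entries in $G$, so $a^{-1}\in\mathfrak{B}_n(\Lambda)$. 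The main work is closure under $\cdot$: given $a,b\in\mathfrak{B}_n(\Lambda)$, I would pick $x,y\in\Lambda$ realizing $a$ and $b$ and then use $\s(\Lambda)=\Lambda$ to replace $x$ and $y$ by appropriate shifts so that $a$ occurs in $x$ and $b$ occurs in $y$ \emph{both} starting at coordinate $0$. Then $x\bullet y\in\Lambda$, and on the coordinates $0,\ldots,n-1$ its entries are $a_j\cdot b_j\in G$; hence the block $a\cdot b$ appears in a sequence of $\Lambda$ with all entries in $G$, giving $a\cdot b\in B_n(\Lambda)\cap G^n=\mathfrak{B}_n(\Lambda)$.

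Finally, for the topological assertion I would note that the alphabet $A=G$ carries the discrete topology, so the product topology on $\mathfrak{B}_n(\Lambda)\subseteq A^n$ (a finite product) is itself discrete; every group with the discrete topology is a topological group, whence continuity of $\cdot$ and of inversion is automatic. The one genuinely nontrivial step is the closure argument, and within it the crucial device is the alignment of the two occurrences to a common starting coordinate via shift-invariance: this is precisely what lets the coordinatewise nature of $\bullet$ produce the block $a\cdot b$ rather than some misaligned product. Everything else is bookkeeping inherited from the group structure of $G^n$.
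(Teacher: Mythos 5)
Your proof is correct and is exactly the routine verification the paper intends: the paper states this proposition without proof (as one of the results that ``can be easily proven''), and your devices---realizing blocks by sequences of $\Lambda$, aligning occurrences at coordinate $0$ via $\sigma(\Lambda)=\Lambda$, invoking closure of $\Lambda$ under $\bullet$ and $*$ together with their coordinatewise action, and noting that the finite product topology on $\mathfrak{B}_n(\Lambda)$ is discrete so continuity is automatic---are the same ones the paper uses in the neighbouring proofs of Propositions \ref{group-inversesemigroup} and \ref{NormalSubgroup}. In particular, your derivation of $1_G^n\in\mathfrak{B}_n(\Lambda)$ from $e^\infty=x\bullet x^*$ is literally the opening step of the paper's proof of Proposition \ref{group-inversesemigroup}, so your argument matches the paper's intended one.
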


\begin{prop} Let $(\Lambda,\bullet)\subset(\Sigma_G^\NZ,\bullet)$ be an inverse semigroup shift space induced from a group multiplication on $G$. For each $n\in\NZ\cup\{\pm\infty\}$, we have that $(\Lambda_n,\bullet)$ is a topological group where the identity is $e^n$. Furthermore, if $x\in \Lambda_n$
then $x^{-1}$ is the sequence obtained by taking the inverses of each entry (distinct of $\o$) of $x$.
\end{prop}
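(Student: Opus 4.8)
The plan is to realize $(\Lambda_n,\bullet)$ as a subgroup of the topological group $(\Sigma_{G\ n}^\NZ,\bullet)$ supplied by Proposition \ref{continuityGroup}.vi, and then let it inherit the topological group structure. First I would record that $\Lambda_n=\Lambda\cap\Sigma_{G\ n}^\NZ$ and that, by Definition \ref{sets_of_words_3}, $\Lambda_n$ carries the topology induced from $\Lambda$; since $\Lambda$ and $\Sigma_{G\ n}^\NZ$ are both subspaces of $\Sigma_G^\NZ$, this topology agrees with the subspace topology of $\Lambda_n$ inside $\Sigma_{G\ n}^\NZ$. By Proposition \ref{continuityGroup}.vi, $\Sigma_{G\ n}^\NZ$ is a topological group whose identity is $e^n$, and since $\bullet$ is the entrywise operation induced from $G$, inversion in this group is entrywise inversion. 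If $\Lambda_n=\emptyset$ there is nothing to check, so I would assume $\Lambda_n\neq\emptyset$.

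The core of the argument is verifying that $\Lambda_n$ is closed under $\bullet$, closed under $*$, and contains $e^n$. For closure, given $x,y\in\Lambda_n$ the product $x\bullet y$ lies in $\Lambda$ because $\Lambda$ is a subsemigroup, and it has length exactly $n$: for $i\leq n$ both entries $x_i,y_i$ lie in $G$ so $(x\bullet y)_i=x_i\cdot y_i\in G$, while for $i>n$ we have $x_i=y_i=\o$ and hence $(x\bullet y)_i=\o$ (this is the no-zero-divisor property of Proposition \ref{continuityGroup}.iv, equivalently the closure of $G$ under multiplication). For inverses, I would note that the $\Lambda$-inverse of $x$ satisfies the inverse-semigroup relations inside $\Sigma_G^\NZ$, so by uniqueness of inverses it coincides with the ambient $x^*$; the latter is the entrywise inverse, has the same length $n$, and is therefore an element of $\Lambda_n$ equal to the group inverse of $x$ in $\Sigma_{G\ n}^\NZ$. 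The identity then comes for free: for any $x\in\Lambda_n$ we have $e^n=x\bullet x^*\in\Lambda$ and $e^n\in\Sigma_{G\ n}^\NZ$, so $e^n\in\Lambda_n$.

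With these three facts, $(\Lambda_n,\bullet)$ is a subgroup of the topological group $(\Sigma_{G\ n}^\NZ,\bullet)$; as any subgroup of a topological group is a topological group for the subspace topology, and the topologies were matched at the outset, I would conclude that $(\Lambda_n,\bullet)$ is a topological group with identity $e^n$, and that its inversion is the restriction of entrywise inversion, which yields the claimed description of $x^{-1}$. The cases $n=\pm\infty$ are handled by the same reasoning, with $\Lambda_{-\infty}=\{\O\}$ trivial and $\Lambda_\infty=\Lambda^{\text{inf}}$ always nonempty. I do not expect a genuine obstacle here; the only points requiring care, rather than computation, are the identification of the several topologies on $\Lambda_n$ and the verification that the abstract involution on $\Lambda$ is entrywise inversion, both of which reduce to uniqueness of inverses and the definition of the subspace topology once Proposition \ref{continuityGroup} is invoked.
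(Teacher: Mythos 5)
Your proof is correct. There is, in fact, no argument in the paper to compare it against: the proposition (together with its companion on $(\mathfrak{B}_n(\Lambda),\cdot)$) is stated right after the remark that ``the following results can be easily proven,'' and no proof is given, so your write-up supplies what the authors omitted. Your route --- exhibiting $\Lambda_n=\Lambda\cap\Sigma_{G\ n}^\NZ$ as a subgroup of the topological group $(\Sigma_{G\ n}^\NZ,\bullet)$ of Proposition \ref{continuityGroup}.vi and letting it inherit the subspace topology --- is the natural one, and you correctly isolate the two points that genuinely require justification: first, that the topology placed on $\Lambda_n$ in Definition \ref{sets_of_words_3} (induced from $\Lambda$) coincides with the subspace topology it receives from $\Sigma_{G\ n}^\NZ$, by transitivity of subspace topologies inside $\Sigma_G^\NZ$; second, that the inverse of $x$ computed inside the inverse semigroup $\Lambda$ agrees with the ambient $x^*$, by uniqueness of generalized inverses in $(\Sigma_G^\NZ,\bullet)$, hence is the entrywise inverse, has length $n$, and lies in $\Lambda_n$. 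The closure computation (no zero divisors in $G$, so lengths are preserved under $\bullet$), the identification $e^n=x\bullet x^*\in\Lambda_n$, and your handling of the degenerate cases ($\Lambda_n=\emptyset$ and $n=\pm\infty$) are all sound, and together they justify the authors' claim that the result is routine.
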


Note that  $(\mathfrak{B}_n(\Lambda),\cdot)$ is a subgroup of $(G^n,\cdot)$, that agrees with
$(G^n,\cdot)$ only when $\Lambda$ is the full shift.\\

\begin{prop}\label{NormalSubgroup} For each $k,n\geq 1$, we have that $\FF_k(\Lambda,1_G^n)$ and $\Ppp_k(\Lambda,1_G^n)$ are normal subgroups of $(\mathfrak{B}_k(\Lambda),\cdot)$.
\end{prop}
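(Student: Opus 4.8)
The plan is to realize both $\FF_k(\Lambda,1_G^n)$ and $\Ppp_k(\Lambda,1_G^n)$ as images of kernels of coordinate projections between the block groups $\mathfrak{B}_m(\Lambda)$, using the elementary fact that the image of a normal subgroup under a surjective group homomorphism is normal. By the proposition above, each $(\mathfrak{B}_m(\Lambda),\cdot)$ is a group under entrywise multiplication, and projecting a length-$m$ block onto any consecutive sub-window is a group homomorphism $\mathfrak{B}_m(\Lambda)\to\mathfrak{B}_{m'}(\Lambda)$ (a consecutive sub-block of a block of $\Lambda$ is again a block of $\Lambda$, and it carries no empty letters, so it lands in $\mathfrak{B}_{m'}(\Lambda)$, and the entrywise product restricts coordinatewise). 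Taking $\rho\colon\mathfrak{B}_{n+k}(\Lambda)\to\mathfrak{B}_n(\Lambda)$ to be the projection onto the first $n$ coordinates and $\pi\colon\mathfrak{B}_{n+k}(\Lambda)\to\mathfrak{B}_k(\Lambda)$ the projection onto the last $k$, one has $\ker\rho=\{1_G^n v\in\mathfrak{B}_{n+k}(\Lambda)\}$; and since $v\in\FF_k(\Lambda,1_G^n)$ iff $1_G^n v\in\mathfrak{B}(\Lambda)$ iff $1_G^n v\in\mathfrak{B}_{n+k}(\Lambda)$, we get exactly $\FF_k(\Lambda,1_G^n)=\pi(\ker\rho)$. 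The predecessor set is handled symmetrically, with $\rho'$ projecting onto the last $n$ coordinates and $\pi'$ onto the first $k$, giving $\Ppp_k(\Lambda,1_G^n)=\pi'(\ker\rho')$.

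It then remains to verify that $\pi$ and $\pi'$ are surjective onto $\mathfrak{B}_k(\Lambda)$, which is the crux of the argument: surjectivity of $\pi$ says every $b\in\mathfrak{B}_k(\Lambda)$ extends to the left by $n$ non-empty symbols, and surjectivity of $\pi'$ that it extends to the right. For the right extension I would use that $\Lambda^{\text{inf}}$ is dense in $\Lambda$ (the infinite extension property): any $b\in\mathfrak{B}_k(\Lambda)$ occurs inside some $x\in\Lambda$, and a generalized cylinder fixing the coordinates of $x$ up to and including that occurrence is an open neighbourhood of $x$, hence contains some infinite sequence $x'$; as $x'$ has no empty letters it extends $b$ to the right by $n$ non-empty symbols. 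For the left extension I would instead exploit the structure of empty letters together with $\s(\Lambda)=\Lambda$: since an empty letter forces every subsequent entry to be empty, any non-empty window in a sequence of $\Lambda$ has all of its preceding coordinates non-empty; in the two-sided case this immediately supplies the $n$ non-empty predecessors, while in the one-sided case (where $b$ might occur too close to coordinate $0$) one first uses surjectivity of $\s$ to push the occurrence of $b$ past coordinate $n$, the prepended letters being automatically non-empty for the same reason.

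With surjectivity established, $\FF_k(\Lambda,1_G^n)=\pi(\ker\rho)$ is the image of the normal subgroup $\ker\rho\trianglelefteq\mathfrak{B}_{n+k}(\Lambda)$ under the surjective homomorphism $\pi$, hence a normal subgroup of $\mathfrak{B}_k(\Lambda)$, and likewise $\Ppp_k(\Lambda,1_G^n)=\pi'(\ker\rho')\trianglelefteq\mathfrak{B}_k(\Lambda)$. This also gives the subgroup axioms for free; in particular the identity $1_G^k$ lies in $\FF_k(\Lambda,1_G^n)$ because $\Lambda$ contains an infinite sequence $x$, so $e^\infty=x\bullet x^*\in\Lambda$ and the all-$1_G$ block $1_G^{n+k}$ occurs in it. The main obstacle is precisely the extension statement, and within it the right extension, where pure algebra does not suffice and one must invoke the infinite extension property; a more computational alternative that avoids the quotient language is to fix, for a given $g\in\mathfrak{B}_k(\Lambda)$, a left-extension $wg\in\mathfrak{B}_{n+k}(\Lambda)$ and observe that $(wg)\cdot(1_G^n b)\cdot(wg)^{-1}=1_G^n\,(g\cdot b\cdot g^{-1})$ entrywise lies in the group $\mathfrak{B}_{n+k}(\Lambda)$, so that $g\cdot b\cdot g^{-1}\in\FF_k(\Lambda,1_G^n)$, but this relies on the same extension input.
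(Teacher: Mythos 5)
Your proposal is correct, but it takes a genuinely different route from the paper's proof. The paper argues directly with sequences: for the subgroup property it picks $x,y\in\Lambda$ carrying the blocks $1_G^ng$ and $1_G^nh$ in the same window, forms $x\bullet y^{-1}\in\Lambda$ (closure of the inverse semigroup $(\Lambda,\bullet)$ under products and inverses) and reads off the block $1_G^n(g\cdot h^{-1})$; for normality it conjugates by any $y\in\Lambda$ carrying $a\in\mathfrak{B}_k(\Lambda)$ in the correct window, the point being that whatever letters of $y$ sit in front of $a$ cancel under conjugation, so the paper never needs to know that $a$ admits a non-empty extension of a prescribed shape. (This is precisely the ``computational alternative'' you sketch in your last sentences, carried out with sequences instead of blocks, and it is why the paper's proof needs less input than yours.) Your argument instead reduces everything to standard group theory --- kernels are normal, and the image of a normal subgroup under a surjective homomorphism is normal --- applied to the window projections of the block groups $\mathfrak{B}_{n+k}(\Lambda)\to\mathfrak{B}_n(\Lambda)$ and $\mathfrak{B}_{n+k}(\Lambda)\to\mathfrak{B}_k(\Lambda)$. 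The price is the surjectivity of those projections, i.e.\ that every $b\in\mathfrak{B}_k(\Lambda)$ extends by $n$ non-empty letters both to the left and to the right; your proofs of these extension facts are correct and complete (density of $\Lambda^{\text{inf}}$, equivalently the infinite extension property, for the right extension; propagation of the empty letter together with $\s(\Lambda)=\Lambda$ for the left one, including the one-sided adjustment), and they isolate exactly the dynamical content that the paper's direct computation quietly bypasses. What your route buys in exchange is that the subgroup axioms and normality come for free, and the coset structure it exposes is exactly what the paper exploits immediately afterwards (Corollary \ref{LAF_and_LAP} identifies $\LAF^{n,k}$ and $\LAP^{n,k}$ with the families of cosets of $\FF_k(\Lambda,1_G^n)$ and $\Ppp_k(\Lambda,1_G^n)$); what the paper's route buys is brevity and minimal topological input.
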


\begin{proof}
We will prove the result only for the follower sets, as the proof for the predecessor sets is analogous.

First we show that $\FF_k(\Lambda,1_G^n)$ is a subgroup of $\mathfrak{B}_k(\Lambda)$. For this, notice that given $g=(g_1,\ldots,g_k),h=(h_1,\ldots,h_k)\in\FF_k(\Lambda,1_G^n)$ there exist two sequences  $x,y\in\Lambda$ such that
$$\begin{array}{ll}
x_i=y_i=1_G,& \forall \ 1\leq i\leq n,\\
x_i=g_{i-n},& \forall \ n+1\leq i\leq n+k,\\
y_i=h_{i-n},& \forall \ n+1\leq i\leq n+k.
\end{array}$$
So, $x\bullet y^{-1}$ is a sequence in $\Lambda$ such that
$$
(x\bullet y^{-1})_i=x_i\cdot y_i^{-1}=\left\{\begin{array}{ll}1_G,& \forall \ 1\leq i\leq n,\\
g_{i-n}\cdot h_{i-n}^{-1},& \forall \ n+1\leq i\leq n+k
\end{array}\right.$$
and hence $g\cdot h^{-1}=(g_1\cdot h_1^{-1},\ldots,g_k\cdot h_k^{-1})\in\FF_k(\Lambda,1_G^n)$.

Now we show that $\FF_k(\Lambda,1_G^n)$ is a normal subgroup of $\mathfrak{B}_k(\Lambda)$. So, let $a=(a_1\ldots,a_k)\in \mathfrak{B}_k(\Lambda)$ and $g=(g_1,\ldots,g_k)\in\FF_k(\Lambda,1_G^n)$ and take sequences $x,y\in \Lambda$ such that
$$\begin{array}{ll}
x_i=1_G, & \forall \ 1\leq i\leq n,\\
x_i=g_{i-n},& \forall \ n+1\leq i\leq n+k,\\
y_i=a_{i-n},& \forall \ n+1\leq i\leq n+k.
\end{array}$$
Then $y\bullet x\bullet y^{-1}\in\Lambda$ and is such that
$$\begin{array}{lcl}
(y\bullet x\bullet y^{-1})_i=y_i\cdot x_i\cdot y_i^{-1}=\left\{\begin{array}{ll} y_i\cdot 1_G\cdot y_i^{-1}=1_G,& \forall \ 1\leq i\leq n,\\
a_i\cdot g_i\cdot a_i^{-1},& \forall \ n+1\leq i\leq n+k\end{array}\right.
\end{array}$$
and therefore $a\cdot g\cdot a^{-1}=(a_1\cdot g_1\cdot a_1^{-1},\ldots, a_k\cdot g_k\cdot a_k^{-1})\in\FF_k(\Lambda,1_G^n)$ as desired.

\end{proof}

We now prove a key result relating the follower and predecessor sets of arbitrary words to the follower and predecessor sets of words consisting only of the identity of $G$.
\begin{prop}\label{FollowerSetProduct}
For each $k,n\geq 1$ and $a\in \mathfrak{B}_n(\Lambda)$ we have that
\begin{enumerate}
\item $b\in\FF_k(\Lambda,a)$ if, and only if, $b\cdot \FF_k(\Lambda,1_G^n)=\FF_k(\Lambda,1_G^n)\cdot b=\FF_k(\Lambda,a)$.

\item $b\in\Ppp_k(\Lambda,a)$ if, and only if, $b\cdot \Ppp_k(\Lambda,1_G^n)=\Ppp_k(\Lambda,1_G^n)\cdot b=\Ppp_k(\Lambda,a)$.
\end{enumerate}

\end{prop}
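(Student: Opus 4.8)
The plan is to prove statement 1 about follower sets; statement 2 follows by the symmetric argument applied to predecessors. Set $N:=\FF_k(\Lambda,1_G^n)$, which by Proposition \ref{NormalSubgroup} is a normal subgroup of the group $(\mathfrak{B}_k(\Lambda),\cdot)$. Since $N$ is normal we have $bN=Nb$ automatically, so the claim reduces to showing that, for $b\in\mathfrak{B}_k(\Lambda)$, one has $b\in\FF_k(\Lambda,a)$ if and only if $bN=\FF_k(\Lambda,a)$. The reverse implication is immediate: if $bN=\FF_k(\Lambda,a)$ then $b=b\cdot 1_G^k\in bN=\FF_k(\Lambda,a)$ because $1_G^k\in N$. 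Thus the real work lies in the forward implication.

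For the forward direction I would first record the translation that $b\in\FF_k(\Lambda,a)$ if and only if the concatenation $ab$ lies in $\mathfrak{B}_{n+k}(\Lambda)$; this holds because $a$ and $b$ use no empty letter, so $ab\in\mathfrak{B}(\Lambda)$ is the same as $ab\in\mathfrak{B}_{n+k}(\Lambda)$. The crucial point is that concatenation is compatible with the entrywise group product on blocks of length $n+k$: for any $c\in\mathfrak{B}_k(\Lambda)$ one has $(ab)\cdot(1_G^n c)=a\,(b\cdot c)$ and $(ab)^{-1}\cdot(ab')=1_G^n\,(b^{-1}\cdot b')$, since in both products the first $n$ coordinates multiply to $1_G$. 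Now fix $b\in\FF_k(\Lambda,a)$, so $ab\in\mathfrak{B}_{n+k}(\Lambda)$. For the inclusion $bN\subseteq\FF_k(\Lambda,a)$, take $c\in N$, so $1_G^n c\in\mathfrak{B}_{n+k}(\Lambda)$; multiplying the elements $ab$ and $1_G^n c$ inside the group $(\mathfrak{B}_{n+k}(\Lambda),\cdot)$ yields $a\,(b\cdot c)\in\mathfrak{B}_{n+k}(\Lambda)$, whence $b\cdot c\in\FF_k(\Lambda,a)$. For the inclusion $\FF_k(\Lambda,a)\subseteq bN$, take any $b'\in\FF_k(\Lambda,a)$, so $ab'\in\mathfrak{B}_{n+k}(\Lambda)$; then $(ab)^{-1}\cdot(ab')=1_G^n\,(b^{-1}\cdot b')\in\mathfrak{B}_{n+k}(\Lambda)$, which says precisely that $b^{-1}\cdot b'\in\FF_k(\Lambda,1_G^n)=N$, i.e. $b'\in bN$. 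Combining the inclusions gives $bN=\FF_k(\Lambda,a)$.

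I do not expect a genuine obstacle: the argument rests only on the group structure of $\mathfrak{B}_{n+k}(\Lambda)$ (that products and inverses of admissible blocks are again admissible) and on the normality of $N$, both already in hand. The step demanding the most care is the concatenation-to-product dictionary — tracking which coordinates are the first $n$, where the factor $1_G^n$ acts trivially, and which are the last $k$, where the genuine multiplication occurs — and verifying that closure of $\mathfrak{B}_{n+k}(\Lambda)$ under the group operations is exactly what lets us pass from membership of $ab$ and $1_G^n c$ to membership of $a\,(b\cdot c)$.
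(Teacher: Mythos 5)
Your proof is correct, and its computational core is the same as the paper's: the two key identities you use --- multiplying $ab$ by $1_G^n c$ to produce $a\,(b\cdot c)$, and cancelling via $(ab)^{-1}\cdot(ab') = 1_G^n\,(b^{-1}\cdot b')$ --- are exactly the computations the paper performs. The difference is the level at which they are carried out. The paper works with infinite sequences: given $b\in\FF_k(\Lambda,a)$ and $c\in\FF_k(\Lambda,1_G^n)$, it chooses witnesses $x,y\in\Lambda$ containing the blocks $ab$ and $1_G^n c$, and uses closure of the inverse semigroup $(\Lambda,\bullet)$ under products and inverses (forming $x\bullet y$ and $x^{-1}\bullet z$) to exhibit witnesses for $a\,(b\cdot c)$ and $1_G^n\,(b^{-1}\cdot d)$; it then assembles $\FF_k(\Lambda,a)=b\cdot b^{-1}\cdot\FF_k(\Lambda,a)\subseteq b\cdot\FF_k(\Lambda,1_G^n)\subseteq\FF_k(\Lambda,a)$ and disposes of right multiplication as ``analogous''. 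You instead work entirely inside the finite block groups, outsourcing all sequence-level work to the paper's earlier (unlabelled) proposition that $(\mathfrak{B}_{m}(\Lambda),\cdot)$ is a group for every $m$, applied with $m=n+k$; closure of that group under products and inverses replaces the explicit witnesses. This buys two things: the argument becomes purely algebraic once the concatenation-to-product dictionary is recorded, and normality of $N=\FF_k(\Lambda,1_G^n)$ (Proposition \ref{NormalSubgroup}) gives $bN=Nb$ outright, so the two-sided equality in the statement requires no separate argument. The mild cost is that your proof rests on that unlabelled block-group proposition, which the paper asserts without proof and whose verification is precisely the sequence manipulation that the paper's own proof of this result carries out by hand; so in substance the two proofs use the same facts, yours simply factors them through previously stated results rather than re-deriving them.
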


\begin{proof} We will prove the result only for the follower sets and for the multiplication on the left, since the other cases can be proved with analogous arguments.

Let $b=(b_1,\ldots,b_k)\in\FF_k(\Lambda,a)$ and $c=(c_1,\ldots,c_k)\in \FF_k(\Lambda,1_G^n)$. Then there exist $x,y\in\Lambda$ such that
$$\begin{array}{ll}
x_i=a_i, & \forall \ 1\leq i\leq n,\\
x_i=b_{i-n},& \forall \ n+1\leq i\leq n+k,\\
y_i=1_G,& \forall \ 1\leq i\leq n,\\
y_i=c_{i-n},& \forall \ n+1\leq i\leq n+k.\\
\end{array}$$
So, $x\bullet y\in\Lambda$ is such that
$$\begin{array}{ll}
(x\bullet y)_i=a_i\cdot 1_G= a_i,& \forall \ 1\leq i\leq n,\\
(x\bullet y)_i=b_{i-n}\cdot c_{i-n},& \forall \ n+1\leq i\leq n+k,\\
\end{array}$$
and hence $b\cdot c\in\FF_k(\Lambda,a)$ and $b\cdot \FF_k(\Lambda,1_G^n)\subseteq\FF_k(\Lambda,a)$.

For the other inclusion, let again $b=(b_1,\ldots,b_k)\in\FF_k(\Lambda,a)$ and let $x\in \Lambda$ be as above. Then $x^{-1}$ is such that
$$\begin{array}{ll}
x^{-1}_i=a^{-1}_i,& \forall \ 1\leq i\leq n,\\
x^{-1}_i=b^{-1}_{i-n},& \forall \ n+1\leq i\leq n+k\\
\end{array}$$
and hence $b^{-1}\in\FF_k(\Lambda,a^{-1})$, that is, $b\in\FF_k(\Lambda,a)$ if and only if $b^{-1}\in\FF_k(\Lambda,a^{-1})$.

Now, given $d\in\FF_k(\Lambda,a)$, let $z\in\Lambda$ be such that
$$\begin{array}{ll}
z_i=a_i,& \forall \ 1\leq i\leq n,\\
z_i=d^{-1}_{i-n},& \forall \ n+1\leq i\leq n+k.\\
\end{array}$$
Then $x^{-1}\bullet z$ is such that
$$\begin{array}{ll}
(x^{-1}\bullet z)_i=a^{-1}_i\cdot a_i= 1_G,& \forall \ 1\leq i\leq n,\\
(x^{-1}\bullet z)_i=b^{-1}_{i-n}\cdot d_{i-n},& \forall \ n+1\leq i\leq n+k,\\
\end{array}$$
which implies that $b^{-1}\cdot d\in \FF_k(\Lambda,1_G^n)$, that is, $b^{-1}\cdot \FF_k(\Lambda,a)\subseteq\FF_k(\Lambda,1_G^n)$.

Finally, notice that
$$\FF_k(\Lambda,a)=b\cdot b^{-1}\cdot \FF_k(\Lambda,a)\subseteq b\cdot \FF_k(\Lambda,1_G^n)\subseteq \FF_k(\Lambda,a)$$
and hence $b\cdot \FF_k(\Lambda,1_G^n)=\FF_k(\Lambda,a)$ as desired.

For the converse, since $1_G^k\in \FF_k(\Lambda,1_G^n)$, we promptly obtain that $b=b\cdot 1_G^k\in b\cdot \FF_k(\Lambda,1_G^n)=\FF_k(\Lambda,a)$.

\end{proof}

\begin{cor}\label{LAF_and_LAP}
For all $k,n\geq 1$ we have that the families $\LAF^{n,k}:=\{\FF_k(\Lambda,a):\ a\in \mathfrak{B}_n(\Lambda)\}$ and $\LAP^{n,k}:=\{\Ppp_k(\Lambda,a):\ a\in \mathfrak{B}_n(\Lambda)\}$ are pairwise disjoint. Furthermore, $(\LAF^{n,k},\cdot)$ and
$(\LAP^{n,k},\cdot)$ are groups (where $\cdot$ is the piecewise operation induced from the operation $\cdot$ on the alphabet $\LA$) and, for all $a,b\in \mathfrak{B}_n(\Lambda)$, we have that

\begin{enumerate}
\item $|\FF_k(\Lambda,a)|= |\FF_k(\Lambda,b)|$ and $|\Ppp_k(\Lambda,a)|= |\Ppp_k(\Lambda,b)|$;

\item $\FF_k(\Lambda,a)\cdot \FF_k(\Lambda,b)=\FF_k(\Lambda,a\cdot b)$ and $\Ppp_k(\Lambda,a)\cdot \Ppp_k(\Lambda,b)=\Ppp_k(\Lambda,a\cdot b)$.
\end{enumerate}

\end{cor}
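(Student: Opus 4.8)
The plan is to read off the whole statement from the two preceding results: Proposition \ref{NormalSubgroup} tells us that $N:=\FF_k(\Lambda,1_G^n)$ (respectively $\Ppp_k(\Lambda,1_G^n)$) is a normal subgroup of the group $(\mathfrak{B}_k(\Lambda),\cdot)$, and Proposition \ref{FollowerSetProduct} tells us that every $\FF_k(\Lambda,a)$ is a coset of $N$. Once these two facts are combined, every assertion in the corollary reduces to a standard property of cosets of a normal subgroup, which is precisely why the statement is phrased as a corollary. I will argue for the follower sets; the predecessor case is identical, using part (ii) of Propositions \ref{NormalSubgroup} and \ref{FollowerSetProduct}.

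Fix $k,n\geq 1$ and set $N:=\FF_k(\Lambda,1_G^n)$. First I would record that each $\FF_k(\Lambda,a)$ with $a\in\mathfrak{B}_n(\Lambda)$ is nonempty: since $a$ is an all-letter block occurring in $\Lambda$ and $\Lambda^{\text{inf}}$ is dense in $\Lambda$ (the infinite extension property), $a$ occurs inside some infinite sequence of $\Lambda$, whose $k$ letters immediately following an occurrence of $a$ all lie in $\LA$ and hence furnish an element of $\FF_k(\Lambda,a)$. Choosing any $b_0\in\FF_k(\Lambda,a)$, Proposition \ref{FollowerSetProduct}(i) gives $\FF_k(\Lambda,a)=b_0\cdot N=N\cdot b_0$, so $\FF_k(\Lambda,a)$ is a (left, equivalently right) coset of $N$ in $\mathfrak{B}_k(\Lambda)$. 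Distinct cosets of a subgroup are disjoint, so the family $\LAF^{n,k}$ is pairwise disjoint; and every coset of $N$ has cardinality $|N|$, which yields $|\FF_k(\Lambda,a)|=|\FF_k(\Lambda,b)|$ for all $a,b\in\mathfrak{B}_n(\Lambda)$, i.e. item (i).

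For item (ii) I would first establish the pointwise statement that $g\cdot h\in\FF_k(\Lambda,a\cdot b)$ whenever $g\in\FF_k(\Lambda,a)$ and $h\in\FF_k(\Lambda,b)$. As in the computation in the proof of Proposition \ref{FollowerSetProduct}, pick $x,y\in\Lambda$ with $(x_1\cdots x_n,\,x_{n+1}\cdots x_{n+k})=(a,g)$ and $(y_1\cdots y_n,\,y_{n+1}\cdots y_{n+k})=(b,h)$; then $x\bullet y\in\Lambda$ carries $a\cdot b$ in its first $n$ coordinates and $g\cdot h$ in the next $k$, so $g\cdot h\in\FF_k(\Lambda,a\cdot b)$. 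Combining this with the coset description and the normality of $N$, for $b_0\in\FF_k(\Lambda,a)$ and $c_0\in\FF_k(\Lambda,b)$ we obtain
\[
\FF_k(\Lambda,a)\cdot\FF_k(\Lambda,b)=(b_0N)(c_0N)=(b_0\cdot c_0)N=\FF_k(\Lambda,a\cdot b),
\]
the last equality by Proposition \ref{FollowerSetProduct}(i), since $b_0\cdot c_0\in\FF_k(\Lambda,a\cdot b)$. This is item (ii), and in particular it shows the setwise product is well defined on $\LAF^{n,k}$.

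Finally, the group structure follows formally. The map $\Theta:\mathfrak{B}_n(\Lambda)\to\LAF^{n,k}$, $\Theta(a):=\FF_k(\Lambda,a)$, is surjective by the definition of $\LAF^{n,k}$ and multiplicative by item (ii), so $\LAF^{n,k}$ is the homomorphic image of the group $(\mathfrak{B}_n(\Lambda),\cdot)$ and is therefore a group, with identity $\Theta(1_G^n)=N$ and with $\FF_k(\Lambda,a)^{-1}=\FF_k(\Lambda,a^{-1})$. I expect the only genuinely delicate point to be the nonemptiness of the follower (and predecessor) sets, which is what legitimizes the coset language; everything else is bookkeeping once the identification of $\FF_k(\Lambda,a)$ with a coset of $N$ is in hand. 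The statements for $\LAP^{n,k}$ are obtained verbatim by replacing left/right concatenation and using part (ii) of the two cited propositions.
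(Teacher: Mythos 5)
Your proposal is correct and takes essentially the same route as the paper: the paper's proof is the one-line observation that Propositions \ref{NormalSubgroup} and \ref{FollowerSetProduct} identify $\LAF^{n,k}$ and $\LAP^{n,k}$ as the families of cosets of the normal subgroups $\FF_k(\Lambda,1_G^n)$ and $\Ppp_k(\Lambda,1_G^n)$, after which everything is standard coset bookkeeping. You have simply filled in the details the paper leaves implicit (nonemptiness of the follower sets, the pointwise product computation, and the group structure via the surjective multiplicative map $a\mapsto\FF_k(\Lambda,a)$), all of which are sound.
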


\begin{proof}
The above statements follow directly from Proposition \ref{NormalSubgroup} and Theorem \ref{FollowerSetProduct}, which imply that $\LAF^{n,k}$ and $\LAP^{n,k}$ are the families of cosets of $\FF_k(\Lambda,1_G^n)$ and $\Ppp_k(\Lambda,1_G^n)$, respectively.

\end{proof}

\begin{prop}
For all $k,n\geq 1$ we have that $(\LAF^{n,k},\cdot)$ and $(\LAP^{k,n},\cdot)$ are group isomorphic.
\end{prop}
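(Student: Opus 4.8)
The plan is to realize both $\LAF^{n,k}$ and $\LAP^{k,n}$ as the \emph{same} quotient of the group $\mathfrak{B}_n(\Lambda)$ and then conclude via the first isomorphism theorem. First I would consider the map $q_F\colon \mathfrak{B}_n(\Lambda)\to \LAF^{n,k}$ given by $q_F(a)=\FF_k(\Lambda,a)$. By definition $\LAF^{n,k}=\{\FF_k(\Lambda,a):a\in\mathfrak{B}_n(\Lambda)\}$ is exactly the image of $q_F$, so $q_F$ is surjective, and the identity $\FF_k(\Lambda,a)\cdot\FF_k(\Lambda,b)=\FF_k(\Lambda,a\cdot b)$ from Corollary \ref{LAF_and_LAP} shows that $q_F$ is a group homomorphism. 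Since $1_G^n$ is the identity of $\mathfrak{B}_n(\Lambda)$, the identity of $\LAF^{n,k}$ is $\FF_k(\Lambda,1_G^n)$, and therefore $\ker q_F=\{a\in\mathfrak{B}_n(\Lambda):\FF_k(\Lambda,a)=\FF_k(\Lambda,1_G^n)\}$.

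The crux of the argument, and the step I expect to be the main obstacle, is the identification $\ker q_F=\Ppp_n(\Lambda,1_G^k)$. For this I would apply the first part of Proposition \ref{FollowerSetProduct} with $b=1_G^k$ (the identity of $\mathfrak{B}_k(\Lambda)$): it yields that $1_G^k\in\FF_k(\Lambda,a)$ if and only if $1_G^k\cdot\FF_k(\Lambda,1_G^n)=\FF_k(\Lambda,a)$, and the left-hand side is simply $\FF_k(\Lambda,1_G^n)$. Hence $\FF_k(\Lambda,a)=\FF_k(\Lambda,1_G^n)$ is equivalent to $1_G^k\in\FF_k(\Lambda,a)$. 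Unwinding the definitions of the follower and predecessor sets, $1_G^k\in\FF_k(\Lambda,a)$ says precisely that the concatenation $a\,1_G^k$ belongs to $\mathfrak{B}_{n+k}(\Lambda)$, which in turn is exactly the condition $a\in\Ppp_n(\Lambda,1_G^k)$. This chain of equivalences gives $\ker q_F=\Ppp_n(\Lambda,1_G^k)$, and I note that $\Ppp_n(\Lambda,1_G^k)$ is indeed a normal subgroup of $\mathfrak{B}_n(\Lambda)$ by Proposition \ref{NormalSubgroup} (with the indices interchanged), so the quotient makes sense.

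To finish I would apply the first isomorphism theorem to obtain $\LAF^{n,k}\cong \mathfrak{B}_n(\Lambda)/\Ppp_n(\Lambda,1_G^k)$, and then observe that Corollary \ref{LAF_and_LAP}, read with $n$ and $k$ swapped, identifies $\LAP^{k,n}$ as exactly the family of cosets of $\Ppp_n(\Lambda,1_G^k)$ in $\mathfrak{B}_n(\Lambda)$, with its group law $\Ppp_n(\Lambda,a)\cdot\Ppp_n(\Lambda,b)=\Ppp_n(\Lambda,a\cdot b)$ being the coset multiplication; that is, $\LAP^{k,n}$ literally \emph{is} the quotient group $\mathfrak{B}_n(\Lambda)/\Ppp_n(\Lambda,1_G^k)$. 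Combining the two statements yields the desired isomorphism $\LAF^{n,k}\cong\LAP^{k,n}$, realized concretely by sending $\FF_k(\Lambda,a)$ to the coset of $\Ppp_n(\Lambda,1_G^k)$ represented by $a$. The only subtlety to keep straight throughout is the index bookkeeping: elements of $\LAF^{n,k}$ are follower sets indexed by length-$n$ words but consisting of length-$k$ words, whereas those of $\LAP^{k,n}$ are predecessor sets indexed by length-$k$ words but consisting of length-$n$ words; the proof succeeds precisely because, after passing to kernels, both appear as quotients of the single group $\mathfrak{B}_n(\Lambda)$ by the common normal subgroup $\Ppp_n(\Lambda,1_G^k)$.
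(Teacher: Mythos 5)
Your proof is correct, but it is organized around a different mechanism than the paper's. The paper argues by hand: it defines the map $\tau:\LAF^{n,k}\to\LAP^{k,n}$ by $\tau(\FF):=\Ppp_n(\Lambda,b)$ for an arbitrary $b\in\FF$, shows $\tau$ is well defined via the equivalence that $b,b'$ lie in a common follower set $\FF_k(\Lambda,a)$ if and only if $\Ppp_n(\Lambda,b)=\Ppp_n(\Lambda,b')$, and then checks injectivity (from disjointness of distinct elements of $\LAP^{k,n}$), surjectivity, and multiplicativity directly. You instead realize both families as the \emph{same} quotient of $\mathfrak{B}_n(\Lambda)$; your kernel computation $\ker q_F=\Ppp_n(\Lambda,1_G^k)$, obtained by specializing Proposition \ref{FollowerSetProduct} to $b=1_G^k$, is the one step with no counterpart in the paper's argument, and it lets the first isomorphism theorem absorb all of the bijectivity and homomorphism checks at once. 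What your route requires in exchange is the identification of $\LAP^{k,n}$ with the \emph{full} coset space of $\Ppp_n(\Lambda,1_G^k)$ in $\mathfrak{B}_n(\Lambda)$: that each $\Ppp_n(\Lambda,a)$ is a coset is again Proposition \ref{FollowerSetProduct}, but that \emph{every} coset $b\cdot\Ppp_n(\Lambda,1_G^k)$ is a predecessor set needs the fact that any block $b\in\mathfrak{B}_n(\Lambda)$ extends on the right by $k$ symbols inside $\Lambda$ --- the same implicit block-extendability the paper uses when it asserts that $\tau$ is onto, so the two proofs stand on equal footing there. It is also worth noting that the two isomorphisms coincide as maps: the coset $a\cdot\Ppp_n(\Lambda,1_G^k)$ equals $\Ppp_n(\Lambda,c)$ for any $c\in\FF_k(\Lambda,a)$, which is exactly $\tau\bigl(\FF_k(\Lambda,a)\bigr)$; your argument thus recovers the paper's map while replacing its case-by-case verification with standard quotient-group machinery.
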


\begin{proof}
First notice that given $n,k\geq 1$ and $b,b'\in \mathfrak{B}_k(\Lambda)$, we have that there exists $a\in \mathfrak{B}_n(\Lambda)$ such that $b,b'\in\FF_k(\Lambda,a)$ if and only if $a\in\Ppp_n(\Lambda,b)\cap\Ppp_n(\Lambda,b')$, which happens if and only if
$\Ppp_n(\Lambda,b)=\Ppp_n(\Lambda,b')$,
%\begin{equation}\label{tau_defn}b,b'\in\F_k(\Lambda,a)\text{, for }a\in \mathfrak{B}_n(\Lambda)\Longleftrightarrow \exists
%a\in\Pp_n(\Lambda,b)\cap\Pp_n(\Lambda,b')\Longleftrightarrow
%\Pp_n(\Lambda,b)=\Pp_n(\Lambda,b'),\text{for }b,b'\in \mathfrak{B}_k(\Lambda),\end{equation}
where the second equivalence is due to the fact that the elements of $\LAP^{k,n}$ are pairwise disjoint. With this in mind, we now obtain a well-defined onto map
$$\begin{array}{lcl}\tau:&\LAF^{n,k}&\to\LAP^{k,n}\\ &\FF&\mapsto \Ppp_n(\Lambda,b)\end{array},$$
where $b\in\FF$ is an arbitrary element.

To check that $\tau$ is one-to-one, suppose that $\tau(\FF_k(\Lambda,a))=\tau(\FF_k(\Lambda,a'))$ for some $a,a'\in \mathfrak{B}_n(\Lambda)$. Then there exists $b\in \mathfrak{B}_k(\Lambda)$ such that
$\tau(\FF_k(\Lambda,a))=\tau(\FF_k(\Lambda,a'))=\Ppp_n(\Lambda,b)$ and, from the definition of $\tau$, this means that $b\in \FF_k(\Lambda,a)$ and $b\in \FF_k(\Lambda,a')$ and hence $\FF_k(\Lambda,a)=\FF_k(\Lambda,a')$.

Finally, let $a,a'\in \mathfrak{B}_n(\Lambda)$, $b\in\FF_k(\Lambda,a)$ and $b'\in\FF_k(\Lambda,a')$. Then we have that $b\cdot b'\in \FF_k(\Lambda,a\cdot a')$ and hence
$$\tau(\FF_k(\Lambda,a)\cdot\FF_k(\Lambda,a'))=\tau(\FF_k(\Lambda,a\cdot a'))=\Ppp_n(\Lambda,b\cdot b')=\Ppp_n(\Lambda,b)\cdot \Ppp_n(\Lambda,b')=
\tau(\FF_k(\Lambda,a))\cdot\tau(\FF_k(\Lambda,a')).$$

\end{proof}

We immediately obtain the following corollary.

\begin{cor}\label{LAF_isomorphic_LAP}
For all $k,n\geq 1$ we have that $|\LAF^{n,k}|=|\LAP^{k,n}|$.
\end{cor}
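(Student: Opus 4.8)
The plan is to read the result directly off the preceding proposition. That proposition constructs an explicit map $\tau \colon \LAF^{n,k} \to \LAP^{k,n}$ and shows that it is a group isomorphism; in particular $\tau$ is a bijection. Since two sets in bijective correspondence have the same cardinality, I would immediately conclude that $|\LAF^{n,k}| = |\LAP^{k,n}|$. The single step, then, is to invoke the isomorphism $\tau$ from the previous proposition, observe that an isomorphism is in particular a bijection, and extract the cardinality equality.

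It is worth emphasizing that the group structure plays no role in the corollary itself: only the bijectivity of $\tau$ is needed, so the statement is a formal consequence of the stronger result proved just above. The one point requiring care is bookkeeping of the indices. The preceding proposition pairs $\LAF^{n,k}$ with $\LAP^{k,n}$, transposing $k$ and $n$ between the follower and predecessor families, and the corollary must use exactly this same pairing; a quick check confirms that it does, so no re-indexing is necessary.

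I do not anticipate any genuine obstacle here, since the entire mathematical content is already packaged in the isomorphism $\tau$ and passing from a bijection to an equality of cardinalities is immediate. Accordingly, I would write the proof as a one-line deduction: the previous proposition yields a group isomorphism $\tau \colon \LAF^{n,k} \to \LAP^{k,n}$, hence a bijection, and therefore $|\LAF^{n,k}| = |\LAP^{k,n}|$.
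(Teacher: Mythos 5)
Your proof is correct and matches the paper exactly: the paper states the corollary as an immediate consequence of the preceding proposition, which establishes the group isomorphism $\tau\colon\LAF^{n,k}\to\LAP^{k,n}$, and a bijection yields equality of cardinalities. Your index bookkeeping (pairing $\LAF^{n,k}$ with $\LAP^{k,n}$) is also consistent with the paper.
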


Notice that in the finite alphabet case an inverse semigroup shift $\Lambda$ is always a shift of finite type (this can be proved using the argument of Proposition 4 in \cite{kitchens}, in spite of the fact that a two-sided shift $\Lambda$ may contain the empty sequence $\O$), while this is not necessarily
true in the infinite alphabet case (see examples below). Furthermore, in the finite alphabet case, Corollaries \ref{LAF_and_LAP} and \ref{LAF_isomorphic_LAP} imply that, given $k,n\geq 1$, if $a\in \mathfrak{B}_n(\Lambda)$ and $b\in
\mathfrak{B}_k(\Lambda)$ then $\FF_k(\Lambda,a)$ and $\Ppp_n(\Lambda,b)$ have the same cardinality, while the same is not necessarily true for the infinite alphabet case (see Example \ref{ex_fractal} below).

Next we illustrate Theorem \ref{FollowerSetProduct} and the discussion above with a few examples of 1-block inverse semigroup shifts.

\begin{ex}\label{Example_F_infinite1}
Let $G=\Z$ with the usual sum of integers. Let $\Lambda' \subset\Sigma_G^\NZ$ be the set of infinite sequences of integers $(x_i)_{i\in\NZ}$ such that $x_i$ and $x_{i+2}$ have the same parity for all $i\in\NZ$. Then $\Lambda:=cl(\Lambda')$,
the closure of $\Lambda'$, is a 2-step shift, but not a shift of finite type. Furthermore, let $\bullet$ be the piecewise operation induced from the group operation in $\Z$. Then $(\Lambda,\bullet)$ is an inverse semigroup.

Notice that, for all $a\in L_{\Lambda}$, $\FF_1(\Lambda,a)=\Z$. Also, for $k\geq 2$, $\FF_k(\Lambda,2i)=\FF_k(\Lambda,2j)\subseteq \Z^k$ and $\FF_k(\Lambda,2i+1)=\FF_k(\Lambda,2j+1)\subseteq \Z^k$ for all $i,j\in \Z$.
Furthermore, denoting by $E$ and $O$ the sets of even and odd integers, respectively, it follows that, for each $k\geq 1$ and $n\geq 2$, given $a=(a_1,\ldots,a_n)\in \mathfrak{B}_n(\Lambda)$ we have that

$$\FF_k(\Lambda,a)=\left\{\begin{array}{ccl} E^k, &\text{ if}& a_{n-1}\text{ and } a_n\text{ are even,}\\
                                     O^k, &\text{ if}& a_{n-1}\text{ and } a_n\text{ are odd,}\\
                                     \underbrace{E\times O\times\ldots,}_{k\ alternated\ Cartesian\ products} &\text{ if}& a_{n-1}\text{ is even and } a_n\text{ is odd,}\\
                                     \underbrace{O\times E\times\ldots,}_{k\ alternated\ Cartesian\ products} &\text{ if}& a_{n-1}\text{ is odd and } a_n\text{ is even.}\\
                                     \end{array}\right. $$
So any follower set has infinitely many elements, while $\LAF^{1,1}$ contains exactly one element, $\LAF^{1,k}$ contains two elements for $k\geq 2$ and, for $n\geq 2$, $\LAF^{n,k}$ contains exactly four elements.
\end{ex}

\begin{ex}\label{Example_F_infinite2} Let $G=\Z$ with the usual sum of integers. Let $\Lambda' \subset\Sigma_G^\Z$ be the set of all periodic infinite sequences, that is, $\Lambda'=\{x\in\Sigma_A^\Z:\ \exists n\in\N,\ s^n(x)=x\}$. Thus $\Lambda:=cl(\Lambda')=\Lambda'\cup\{\O\}$, with the piecewise operation $\bullet$ induced from the group operation in $\Z$, is an inverse semigroup. Note that $\Lambda$ is an {\em infinite-step shift} where the set of forbidden words is exactly all non periodic sequences of $B_{\text{linf}}$. In particular, $\FF_k(\Lambda,a)=\Z$ for all $a=(a_1,\ldots,a_n)\in \mathfrak{B}_n(\Lambda)$ and $k\geq 1$.
\end{ex}

\begin{ex}\label{ex_ZxZ}
Let $G=\Z^2$ with the usual sum and define $\Lambda\subset\Sigma_G^\NZ$ as the inverse semigroup shift with the induced operation, where the infinite sequences $(x_i)_{i\in\NZ}=(x_{1,i},x_{2,i})_{i\in\NZ}$ are such that for
any $i\in\NZ$, $x_{2,i+1}=x_{2,i}$, that is,
$$\Lambda^{\text{inf}}=\bigcup_{i\in\Z}(\Z\times\{i\})^{\NZ}$$
Here, $\Lambda$ is a 1-step shift but not a shift of finite type. For any $k,n\geq 1$, given $a=(a_{1,j},a_{2,j})_{1\leq j\leq n}\in \mathfrak{B}_n(\Lambda)$ we have that $\FF_k(\Lambda,a)=(\Z\times\{a_{2,n}\})^k$. Hence any follower set has infinitely many elements and, for $k,n\geq 1$, the space
$\LAF^{n,k}$ also has infinitely many elements.

\end{ex}

Our next example makes use of direct limits of groups so, for the reader's convenience, we recall the definition below.

For each $i\in\N$ let $G_i$ be a group and $\gamma_i:G_i\to G_{i+1}$ a homomorphism. If $i<j$, the map $\gamma_{j-1}\circ\gamma_{j-2}\circ\cdots\circ\gamma_{i+1}\circ\gamma_i$ is a homomorphism from $G_i$ to $G_j$; we
shorten this to
$$
\gamma_{i, j} := \gamma_{j-1}\circ\gamma_{j-2}\circ\cdots\circ\gamma_{i+1}\circ\gamma_i.$$

Recall that the direct limit $G$ associated to $\{G_i, \gamma_i\}$ is the set of equivalence classes $[g,i]$, where $i\in \N$ and $g\in G_i$, and $[g, i] = [h, j]$ if, and only if, there exists $k \geq i,j$ such that
$\gamma_{i, k}(g) = \gamma_{j,k}(h)$. This is a group under the following operation:
$$[g,i]\cdot[h,j] = [\gamma_{i,j}(g)+ h, j], \text{ if } i\leq j\
$$
and $$ [g,i]\cdot[h,j] = [g+\gamma_{j,i}(h), i]\text{ if } i\geq j,
$$
with inverse given by
$[g,i]^{-1} = [g^{-1},i]$ and identity $\mathfrak{e}=[0,1]$.

One really thinks of $G$ as the disjoint union of the groups, with elements identified if they are eventually mapped to the same elements by $\gamma$. We say that an element $\mathfrak{g}\in G$ is represented in $G_i$
if $\mathfrak{g} = [g,i]$ for some element $g\in G_i$. Clearly, if $\mathfrak{g}$ is represented in $G_i$, it is represented in $G_j$ for all $j>i$. Given $\mathfrak{g}\in G$, we say that $[g,i]$ is the first representation of $\mathfrak{g}$ if  $i\geq 1$ is the least index such that
$\mathfrak{g}=[g,i]$, that is, $i$ the first time that $\mathfrak{g}$ is represented in the family $\{G_k\}_{k\geq 1}$.

\begin{ex}\label{Example_Lim-Direct} For $i\in\N$, let $G_i$ be the finite group $\Z_{2^i}$ with addition and let $\gamma_i:G_i\to G_{i+1}$ be the homomorphism given by $\gamma_i(g):=2g$ (where the product is the usual
product of integers) and consider the resulting direct limit group $G$, which is an infinite group. Note that the first representation of the identity is $\mathfrak{e}=[0,1]$, while all other elements of $G$ correspond to
first representations of the form $[g,i]$, with $g$ odd and $i\geq 1$.

Let $H\lhd G$ be a finite normal subgroup of $G$ (for example $H:=\{\mathfrak{g}\in G:\ \mathfrak{g} = [g,i]\}$ for some fixed $i$). For each letter $\mathfrak{g}\in G$, define
$$\rho(\mathfrak{g}):=\mathfrak{g}\cdot H.$$
Note that $\rho(\mathfrak{e})=H$ and, for each $\mathfrak{g}\in G$, $\rho(\mathfrak{g})$ is a lateral class of $H$ (and then  $\LAF^{1,1}=\{\rho(\mathfrak{g}):\ \mathfrak{g}\in G\}$ is a partition of $G$ by finite sets).

Now, let $\Lambda\subset\Sigma_G^\NZ$ be the shift space where $$\Lambda^{\text{inf}}:=\{(\mathfrak{g}_i)_{i\in\NZ}:\ \mathfrak{g}_{i+1}\in \rho(\mathfrak{g}_i)\}.$$

Clearly $\Lambda$ is a 1-step row-finite shift such that $\FF_1(\Lambda,\mathfrak{g})=\rho(\mathfrak{g})$ (and thus it is not a shift of finite type). Furthermore, to show that $\Lambda$ is an inverse semigroup with the 1-block induced operation, it is sufficient to show that the set $\mathfrak{B}_2(\Lambda)$ is closed under the 1-block operation. So, let $(\mathfrak{g}_1,\mathfrak{g}_2),(\mathfrak{h}_1,\mathfrak{h}_2)\in \mathfrak{B}_2(\Lambda)$. Then
$$(\mathfrak{g}_1,\mathfrak{g}_2)\cdot(\mathfrak{h}_1,\mathfrak{h}_2)=(\mathfrak{g}_1\cdot\mathfrak{h}_1,\mathfrak{g}_2\cdot\mathfrak{h}_2)$$
and, since $\mathfrak{g}_2\in \rho(\mathfrak{g}_1)=\mathfrak{g}_1\cdot H$ and $\mathfrak{h}_2\in \rho(\mathfrak{h}_1)=\mathfrak{h}_1\cdot H$, we have that $$\mathfrak{g}_2\cdot\mathfrak{h}_2\in \mathfrak{g}_1\cdot H\cdot
\mathfrak{h}_1\cdot H=\mathfrak{g}_1\cdot\mathfrak{h}_1\cdot H=\rho(\mathfrak{g}_1\cdot\mathfrak{h}_1)$$ as desired.

\end{ex}

\begin{ex}\label{ex_fractal} Let $G$ be the group given in Example \ref{Example_Lim-Direct} above and let $H:=\{\mathfrak{g}\in G:\ \mathfrak{g} = [g,1]\}=\{[0,1],\ [1,1]\}$. Given $\mathfrak{g} = [g,i]\in G$ define its
follower set as $$\FF_1(\Lambda,\mathfrak{g}):=[g,i+1]\cdot H=\{[g,i+1],\ [2^i+g,i+1]\},\qquad\text{where }2^i+g\text{ stands for the sum }mod\ 2^{i+1}.$$
Now, define $\Lambda\subset\Sigma_G^\NZ$ as the shift such that
$$\Lambda^{\text{inf}}:=\{(\mathfrak{g}_i)_{i\in\NZ}:\ \mathfrak{g}_{i+1}\in \FF_1(\Lambda,\mathfrak{g}_i)\},$$
which is a row-finite Markovian edge shift whose infinite sequences are obtained from bi-infinite walks on the edge graph presented in Figure \ref{ex_graph1} (we leave the proof that $\Lambda^{\text{inf}}$ is a group to the
reader).

In this case, $\LAF^{1,1}$ has infinitely many follower sets, each of them containing only two elements. Furthermore, this provides an example of an inverse semigroup shift where $\Ppp_1(\Lambda,[0,1])$ and $\FF_1(\Lambda,[0,1])$
have distinct cardinalities, which can not occur for group shifts over finite alphabets (see discussion after Corollary \ref{LAF_isomorphic_LAP}).

\begin{figure}[h]
\centering
\includegraphics[width=0.4\linewidth=1.0]{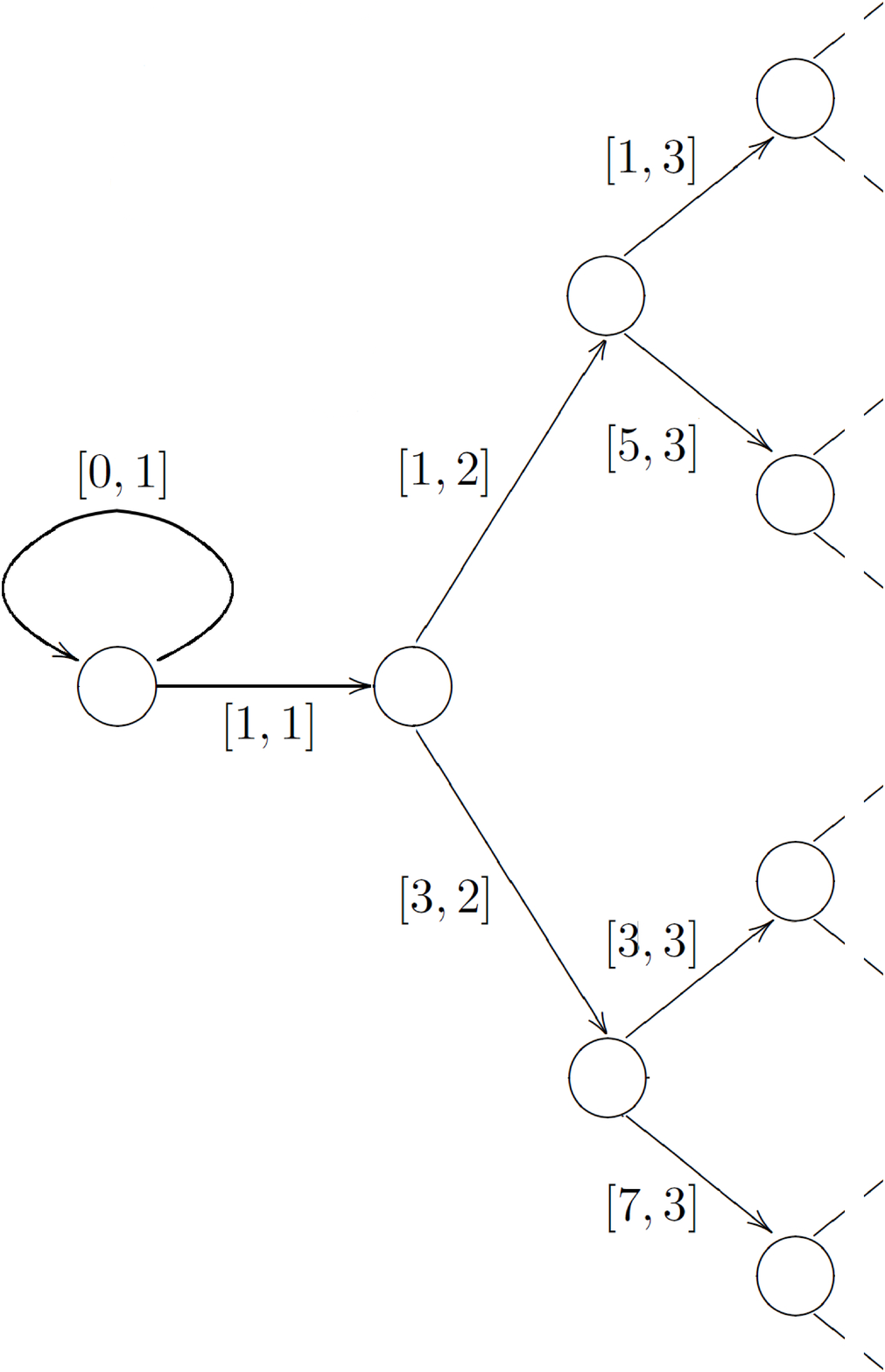}
\caption{The edge graph which represents the row-finite Markov shift given in Example \ref{ex_fractal}.
Each edge corresponds to the first representation of one element of $G$.}\label{ex_graph1}
\end{figure}

\end{ex}

%====================================================================================================================================================
\section{Isomorphism of two-sided Markovian 1-block inverse semigroup shifts induced from group operations}\label{Isomorphism of two-sided Markovian}
%====================================================================================================================================================
\label{final section}
In this section, as in the previous one, we assume that $\Lambda\subset\Sigma_G^\NZ$ is a shift space with $\LA=G$ and also assume that $(\Lambda,\bullet)$ is a subsemigroup of the 1-block inverse semigroup $(\Sigma_G^\NZ,\bullet)$,
where $\bullet$ is induced from a group $(G,\cdot)$.

The next theorem gives sufficient conditions for a 1-block inverse semigroup shift $\Lambda$ to be an $M$-step shift and to be topologically conjugate and isomorphic (as semigroup) to a row-finite shift where an 1-block inverse semigroup operation is defined.

\begin{theo}\label{Conditions_M-step} Let $\Lambda\subset\Sigma_G^\Z$ be a 1-block inverse semigroup shift induced from the group $(G,\cdot)$. If there exists $M\geq 1$ such that
\begin{enumerate}
\item[(a)]  $\FF_1(\Lambda,1_G^m)=\FF_1(\Lambda,1_G^M)$ for all $m\geq M$, then $\Lambda$ is an $M$-step shift and $\Lambda$ is topologically conjugate and isomorphic to an edge shift with 1-block operation (via the $M^{th}$-higher block code);

\item[(b)] $\FF_1(\Lambda,1_G^M)$ is finite, then $\Lambda$ is $N$-step, for some $N\geq M$, and $\Lambda$ is topologically conjugate and isomorphic to a row-finite edge shift with 1-block operation (via the $N^{th}$-higher block code).
\end{enumerate}
\end{theo}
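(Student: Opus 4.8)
The overall plan is to prove (a) in full and then obtain (b) from it by a chain-stabilisation argument. For (a) I would treat the two assertions separately: that $\Lambda$ is $M$-step, and that the $M^{th}$-higher block code presents it as an edge shift carrying a $1$-block operation.

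The heart of the $M$-step claim is a single lemma, which I would isolate first: under hypothesis (a), for every $w=(w_1,\ldots,w_n)\in\mathfrak{B}_n(\Lambda)$ with $n\geq M$,
\[
\FF_1(\Lambda,w)=\FF_1\big(\Lambda,(w_{n-M+1},\ldots,w_n)\big),
\]
so that the follower set of a long word depends only on its length-$M$ suffix. The inclusion $\subseteq$ is immediate, as every subword of an admissible word is admissible. For $\supseteq$ I would invoke the coset description of Proposition \ref{FollowerSetProduct}: the left-hand side is a coset of the subgroup $\FF_1(\Lambda,1_G^n)$ and the right-hand side a coset of $\FF_1(\Lambda,1_G^M)$, and hypothesis (a) forces these two subgroups to coincide. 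Cosets of one subgroup are either disjoint or equal, and here one is contained in the other and both are nonempty (every admissible word extends to an infinite sequence), so they are equal.

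Granting the lemma, the $M$-step property is routine. Let $Y=X_{\mathbf F}$ be the $M$-step shift whose forbidden words are the length-$(M+1)$ words missing from $B_{M+1}(\Lambda)$; clearly $\Lambda\subseteq Y$, and for the reverse inclusion I would take an infinite sequence all of whose $(M+1)$-blocks are admissible and show, by induction on word length and using the lemma at each extension step, that all of its finite subwords lie in $B(\Lambda)$, which places it in $\Lambda^{\text{inf}}$. For the edge-shift statement I would recode over the alphabet $\mathfrak{B}_{M+1}(\Lambda)$, which is a group under the entrywise operation, letting $\beta$ be the $M^{th}$-higher block code $x\mapsto\big((x_i,\ldots,x_{i+M})\big)_{i}$ (finite sequences and $\O$ recoded by the evident truncation). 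For the graph $G$ with vertices $\mathfrak{B}_M(\Lambda)$, edges $\mathfrak{B}_{M+1}(\Lambda)$, and incidence given by deleting the last, respectively first, coordinate, the $M$-step property guarantees that two overlapping admissible $(M+1)$-blocks concatenate to an admissible block; this is exactly what makes $\beta(\Lambda)$ coincide with the edge shift $\Lambda(G)$ and not merely embed in it. That $\beta$ is a topological conjugacy follows from the sliding block code results of \cite{GoncalvesSobottkaStarling2015_2}, and that it is a semigroup isomorphism follows from the entrywise identity
\[
\big(\beta(x\bullet y)\big)_i=\big(\beta(x)\big)_i\cdot\big(\beta(y)\big)_i=\big(\beta(x)\bullet\beta(y)\big)_i,
\]
so the operation transported to $\Lambda(G)$ is precisely the $1$-block operation induced from the group $\mathfrak{B}_{M+1}(\Lambda)$.

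For (b), write $N_m:=\FF_1(\Lambda,1_G^m)$; deleting a leading identity shows $N_{m+1}\subseteq N_m$, so $(N_m)_{m\geq M}$ is a descending chain of subgroups of the finite group $N_M=\FF_1(\Lambda,1_G^M)$. The orders $|N_m|$ are non-increasing positive integers, hence eventually constant, so there is some $N\geq M$ with $N_m=N_N$ for all $m\geq N$. This is hypothesis (a) with $M$ replaced by $N$, so part (a) yields that $\Lambda$ is $N$-step and conjugate and isomorphic to an edge shift via the $N^{th}$-higher block code. That this edge shift is row-finite is then immediate: every vertex $(a_1,\ldots,a_N)$ has out-degree $|\FF_1(\Lambda,a_1\cdots a_N)|=|N_N|<\infty$, since by the lemma and Proposition \ref{FollowerSetProduct} this follower set is a coset of $N_N$. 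The step I expect to be most delicate is not any one of these arguments in isolation but the simultaneous bookkeeping in the recoding: one must verify that the single map $\beta$ is at once a homeomorphism onto its image, a semigroup isomorphism, and a faithful presentation of an edge shift, and in particular that the adjoined finite sequences and the empty sequence $\O$ behave consistently under all three structures. The genuinely load-bearing idea, by contrast, is the coset argument in the lemma, which is the only place the group origin of $\bullet$ enters decisively.
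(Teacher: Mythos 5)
Your route coincides with the paper's in every structural respect: your suffix lemma is exactly the paper's central step (the paper phrases it as a contradiction on a minimal forbidden set, picking $b\in\FF_1\big(\Lambda,(u_1,\ldots,u_k,v_1,\ldots,v_M)\big)\cap\FF_1\big(\Lambda,(v_1,\ldots,v_M)\big)$ and using Proposition \ref{FollowerSetProduct} to write both follower sets as $b\cdot\FF_1(\Lambda,1_G^M)$, which is your coset argument); the edge-shift statement is obtained the same way (the paper outsources the recoding to Proposition 3.19 of \cite{GoncalvesSobottkaStarling2015_2} rather than verifying it by hand); and your derivation of (b) from (a) by stabilization of the descending chain $\FF_1(\Lambda,1_G^{m+1})\subseteq\FF_1(\Lambda,1_G^m)$, with row-finiteness counted via cosets (Corollary \ref{LAF_and_LAP} in the paper), is verbatim the paper's.

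There is, however, a genuine gap, and it sits exactly where you wrote ``routine'': the implication ``all finite subwords of $x$ lie in $B(\Lambda)$, which places it in $\Lambda^{\text{inf}}$''. In this framework a two-sided shift over an infinite alphabet is \emph{not} determined by its finite language: the defining forbidden set may contain left-infinite words (elements of $B_{\text{linf}}$), which is precisely what infinite-step shifts are. Your induction, like hypothesis (a) itself, only controls finite words, so nothing excludes left-infinite obstructions. Concretely, Example \ref{Example_F_infinite2} of the paper --- the closure of the set of periodic sequences over $G=\Z$ --- is a 1-block inverse semigroup shift satisfying hypothesis (a) with $M=1$, since $\FF_1(\Lambda,1_G^m)=\Z$ for every $m$, yet it is an infinite-step shift; so the step you need is false without further assumptions. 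You should know that the paper's own proof has the same defect: it assumes a minimal forbidden set $\mathbf{F}$ can be chosen and then argues only about its members of finite length $m>M+1$, never ruling out left-infinite elements of $\mathbf{F}$, and the example above shows this cannot be repaired. As stated, Theorem \ref{Conditions_M-step}(a) needs an extra hypothesis --- for instance that $\Lambda$ is already known to be a finite-step shift --- under which both your argument and the paper's do go through and show that $\Lambda$ is in fact $M$-step.
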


\begin{proof}{\color{white}.}

\begin{enumerate}
\item[(a)]  Let $M\geq 1$ be such that $\FF_1(\Lambda,1_G^m)=\FF_1(\Lambda,1_G^M)$ for all $m\geq M$.  Let $\mathbf{F}\subset B(\Sigma_G^\Z)$ be a set of forbidden words such that $\Lambda=X_\mathbf{F}$. Note that we can
    always choose $\mathbf{F}$ such that, for all $w\in\mathbf{F}$, $l(w)\geq M+1$ and $w$ doesn't contain any proper subblock which belongs to $\mathbf{F}$.

Let us show that all blocks of $\mathbf{F}$ have length $M+1$. Assume, by contradiction, that there exists $w\in\mathbf{F}$ such that $l(w)=m>M+1$. Since $w$ does not contain subblocks belonging to $\mathbf{F}$, we
can write $w=uv$, the concatenation of two blocks $u=(u_1,\ldots,u_k)\in \mathfrak{B}_k(\Lambda)$, with $k=m-M-1$, and $v=(v_1,\ldots,v_{M+1})\in \mathfrak{B}_{M+1}(\Lambda)$. Now, since
$\FF_1(\Lambda,(u_1,\ldots,u_k,v_1,\ldots,v_M))\subset\FF_1(\Lambda,(v_1,\ldots,v_M))$, we can find  $$b\in \FF_1(\Lambda,(u_1,\ldots,u_k,v_1,\ldots,v_M))\cap\FF_1(\Lambda,(v_1,\ldots,v_M)).$$
   From Theorem \ref{FollowerSetProduct}, we have that $b\cdot \FF_1(\Lambda,1_G^M)=\FF_1(\Lambda,(v_1,\ldots,v_M))$ and $b\cdot \FF_1(\Lambda,1_G^{m-1})=\FF_1(\Lambda,(u_1,\ldots,u_k,v_1,\ldots,v_M))$. So,
   $$v_{M+1}\in  \FF_1(\Lambda,(v_1,\ldots,v_M))=b\cdot \FF_1(\Lambda,1_G^M)=b\cdot \FF_1(\Lambda,1_G^{m-1})=\FF_1(\Lambda,(u_1,\ldots,u_k,v_1,\ldots,v_M)),$$
   which means that $w=(u_1,\ldots,u_k,v_1,\ldots,v_M,v_{M+1})$ is an allowed block, contradicting the assumption that $w\in\mathbf{F}$.\\

   The second part of the statement follows directly from \cite[Proposition 3.19]{GoncalvesSobottkaStarling2015_2} by applying the $M^{th}$-higher block code on $\Lambda$
    and using it to induce an operation on $\Lambda^{[M]}$.

\item[(b)] Let $M\geq 1$ be such that $\FF_1(\Lambda,1_G^M)$ is finite. Then, since for any $m\geq M$ we have that $\FF_1(\Lambda,1_G^{m+1})\subseteq \FF_1(\Lambda,1_G^{m})\subseteq \FF_1(\Lambda,1_G^M))$, there exists $N\geq
    M$ such that $\FF_1(\Lambda,1_G^n)=\FF_1(\Lambda,1_G^N))$ for all $n\geq N$. So, from part (a) above, we have that $\Lambda$ is an $N$-step shift and it is topologically conjugate and isomorphic to an edge shift with 1-block operation (via the
    $N^{th}$-higher block code). Furthermore, from Corollary \ref{LAF_and_LAP}, we have that
   $|\FF_1(\Lambda,a))|=|\FF_1(\Lambda,1_G^N))|<\infty$ for all $a\in \mathfrak{B}_N(\Lambda)$, which implies that the $N^{th}$ higher block shift of $\Lambda$ is a row-finite shift.

\end{enumerate}
\end{proof}

\begin{ex} Let $G$ be the group given in Example \ref{Example_Lim-Direct} and, for each $\mathfrak{g}\in G$, let $\rho(\mathfrak{g})$ be defined as in that example. Define $\Lambda\subset\Sigma_G^\Z$ as the shift such that
$$\Lambda^{\text{inf}}:=\{(\mathfrak{g}_i)_{i\in\N}:\ \mathfrak{g}_{i+2}\in \rho(\mathfrak{g}_i)\}.$$
Then $\Lambda$ is an inverse semigroup shift (with the corresponding induced operation). Note that $\Lambda$ is not a row-finite shift since, for each $\mathfrak{g}\in G$, we have that $\FF_1(\Lambda,\mathfrak{g})=G$, an
infinite set. However, for each $(\mathfrak{g,h})\in \mathfrak{B}_2(\Lambda)$ we have that $\FF_1(\Lambda,(\mathfrak{g,h}))=\rho(\mathfrak{g})$, a finite set. Moreover, for any $m\geq 2$, the word $1_G^m$ is such that
$\FF_1(\Lambda,1_G^m)=\rho(1_G)$ and thus we can take $N=2$ in Theorem \ref{Conditions_M-step}.(b) and hence the $2^{nd}$ higher block presentation of $\Lambda$ is a 1-block inverse semigroup shift which is a 1-step row-finite
shift.
\end{ex}

We now characterize Markovian 1-block inverse semigroup shifts.

\begin{defn}
Let $\Lambda$ be a two-sided shift space over an alphabet $G$. Consider the alphabet $\LAF^{1,1}$ and the full shift $\Sigma_{\LAF^{1,1}}^\Z$. Define \begin{equation}\label{forbiddenblocks_barLambda}\mathbf{F}:=\{\mathfrak{(F,G)}\in\LAF^{1,1}\times\LAF^{1,1}:
\FF_1(\Lambda,a)\neq \mathfrak{G}, \forall a\in\FF\}\end{equation}
and let the the follower-set shift of $\Lambda$ be defined as the 1-step shift $\bar \Lambda\subseteq\Sigma_{\LAF^{1,1}}^\Z$ given by $$\bar \Lambda:=X_\mathbf{F}.$$

In an analogous way we define the predecessor-set shift.
\end{defn}

\begin{prop}\label{row-finite_follower-set}
If $\Lambda\subseteq\Sigma_G^\Z$ is a row-finite shift then its follower-set shift is also row-finite.
\end{prop}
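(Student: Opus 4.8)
The plan is to show directly that, for every letter of $\bar\Lambda$, the set of its allowed successors is finite. Recall that the alphabet of $\bar\Lambda$ is $\LAF^{1,1}$, whose elements are the follower sets $\FF_1(\Lambda,a)$ with $a$ ranging over $\mathfrak{B}_1(\Lambda)=\LA=G$. The crucial first observation is that, because $\Lambda$ is row-finite, each letter of $\bar\Lambda$ is a \emph{finite} subset of $G$: indeed, for $\mathfrak{F}=\FF_1(\Lambda,b)\in\LAF^{1,1}$ we have $\FF_1(\Lambda,b)\subseteq\F_1(\Lambda,b)$, and row-finiteness of $\Lambda$ guarantees that $\F_1(\Lambda,b)$ is finite, so $\mathfrak{F}$ is finite.

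Next I would unwind the definition of the forbidden set $\mathbf{F}$ in \eqref{forbiddenblocks_barLambda} in order to read off the allowed transitions. By definition a pair $(\mathfrak{F},\mathfrak{G})\in\LAF^{1,1}\times\LAF^{1,1}$ lies in $\mathbf{F}$ exactly when $\FF_1(\Lambda,a)\neq\mathfrak{G}$ for every $a\in\mathfrak{F}$; equivalently, $(\mathfrak{F},\mathfrak{G})$ fails to be forbidden precisely when there is some $a\in\mathfrak{F}$ with $\FF_1(\Lambda,a)=\mathfrak{G}$. Since $\bar\Lambda=X_\mathbf{F}$ is the $1$-step shift determined by $\mathbf{F}$, any $2$-block $\mathfrak{F}\mathfrak{G}\in B(\bar\Lambda)$ must omit the forbidden pairs, so
$$\F_1(\bar\Lambda,\mathfrak{F})\subseteq\{\mathfrak{G}\in\LAF^{1,1}:\ (\mathfrak{F},\mathfrak{G})\notin\mathbf{F}\}=\{\FF_1(\Lambda,a):\ a\in\mathfrak{F}\}.$$

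I would then conclude by a cardinality count: the set on the right has at most $|\mathfrak{F}|$ elements, and $|\mathfrak{F}|<\infty$ by the first step. Hence $\F_1(\bar\Lambda,\mathfrak{F})$ is finite for every letter $\mathfrak{F}\in L_{\bar\Lambda}$, which is exactly the assertion that $\bar\Lambda$ is row-finite.

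There is no serious obstacle here; the argument reduces to correctly translating \eqref{forbiddenblocks_barLambda} into a description of the allowed successors of $\mathfrak{F}$, together with the observation that each letter of $\bar\Lambda$ is a finite subset of $G$. The only point deserving care is the inclusion $\F_1(\bar\Lambda,\mathfrak{F})\subseteq\{\FF_1(\Lambda,a):\ a\in\mathfrak{F}\}$: it uses only that every block in the language of $X_\mathbf{F}$ avoids the forbidden pairs, so the inclusion (which is all that finiteness requires) holds regardless of whether every non-forbidden transition is actually realized by a bi-infinite point of $\bar\Lambda$.
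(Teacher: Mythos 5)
Your proof is correct and is essentially the paper's own argument, just spelled out in more detail: the paper's one-line proof likewise observes that each letter $\FF_1(\Lambda,b)$ of $\bar\Lambda$ is finite by row-finiteness of $\Lambda$, and that its allowed successors in $\bar\Lambda$ are among the sets $\FF_1(\Lambda,a)$ with $a$ ranging over that finite set. Your explicit unwinding of the forbidden set \eqref{forbiddenblocks_barLambda} and the remark that only the inclusion $\F_1(\bar\Lambda,\mathfrak{F})\subseteq\{\FF_1(\Lambda,a):\ a\in\mathfrak{F}\}$ is needed are faithful elaborations of the same idea.
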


\begin{proof}
This follows directly from the fact that, for each $b\in \LA$, the follower set $\FF_1(\Lambda,b)$ is finite and hence $\FF_1(\Lambda,b)$ can only be followed in $\bar \Lambda$ by a finite number of elements of
$\LAF^{1,1}$.
\end{proof}

\begin{prop}\label{group_bar_Lambda}
Let $(\Lambda,\bullet)$ be a two-sided 1-block inverse semigroup shift over a group alphabet $(G,\cdot)$. Consider the group $(\LAF^{1,1},\cdot)$ defined in Corollary \ref{LAF_and_LAP}, the full inverse semigroup shift $(\Sigma_{\LAF^{1,1}}^\Z,\bullet)$
induced from $\cdot$ on $\LAF^{1,1}$ and let $\bar \Lambda$ be the follower-set shift of $\Lambda$. Then $(\bar \Lambda,\bullet)$ is a shift subsemigroup of $(\Sigma_{\LAF^{1,1}}^\Z,\bullet)$.
\end{prop}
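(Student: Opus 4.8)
The plan is to observe that $\bar\Lambda$ is, by its very definition, the one-step shift space $X_{\mathbf{F}}$ over the group alphabet $(\LAF^{1,1},\cdot)$, and that the shift map is automatically a homomorphism of the $1$-block operation $\bullet$ (the restriction of the $1$-block operation of $\Sigma_{\LAF^{1,1}}^\Z$ is again a $1$-block operation, and for block operations $\sigma$ is a homomorphism). Hence the only thing left to prove is that $\bar\Lambda$ is \emph{closed} under $\bullet$, i.e. that it is a subsemigroup of $(\Sigma_{\LAF^{1,1}}^\Z,\bullet)$. I would reduce everything to a single combinatorial statement about the transition rule \eqref{forbiddenblocks_barLambda}.

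The crux I would isolate first is: if $(\FF,\mathfrak{G})$ and $(\FF',\mathfrak{G}')$ are allowed two-blocks of $\bar\Lambda$ (that is, not in $\mathbf{F}$), then $(\FF\cdot\FF',\mathfrak{G}\cdot\mathfrak{G}')$ is again allowed. To see this, recall that $(\FF,\mathfrak{G})$ allowed means there is $a\in\FF$ with $\FF_1(\Lambda,a)=\mathfrak{G}$, and likewise $a'\in\FF'$ with $\FF_1(\Lambda,a')=\mathfrak{G}'$. Putting $c:=a\cdot a'$, the coset product gives $c\in\FF\cdot\FF'$, while the second part of Corollary~\ref{LAF_and_LAP} (the multiplicativity $\FF_1(\Lambda,a)\cdot\FF_1(\Lambda,a')=\FF_1(\Lambda,a\cdot a')$) yields $\FF_1(\Lambda,c)=\mathfrak{G}\cdot\mathfrak{G}'$; thus $(\FF\cdot\FF',\mathfrak{G}\cdot\mathfrak{G}')$ is allowed. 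With this, the case of two infinite sequences $x,y\in\bar\Lambda^{\text{inf}}$ is immediate: the entries of $x\bullet y$ stay in the group $\LAF^{1,1}$, so $x\bullet y$ is infinite, and each of its two-blocks $\big((x_i\cdot y_i),(x_{i+1}\cdot y_{i+1})\big)$ is the $\LAF^{1,1}$-product of an allowed two-block of $x$ with one of $y$, hence allowed; since $\bar\Lambda$ is one-step this forces $x\bullet y\in\bar\Lambda$.

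For products involving finite sequences I would use that $\O$ is the zero of $\Sigma_{\LAF^{1,1}}^\Z$, so $\O\bullet\O=\O$ and, by the third part of Corollary~\ref{standard_facts}, any product of finite sequences is again finite; products with $\O$ are trivially $\O\in\bar\Lambda$. Writing $z:=x\bullet y$ for the remaining cases (with $x$ finite of length $n$), its non-empty part $(z_1,\dots,z_n)$ with $z_i=x_i\cdot y_i$ has only allowed two-blocks by the crux, so $z$ avoids every forbidden word. The main obstacle is then to verify that $z$ satisfies the infinite extension property that defines the finite sequences of $X_{\mathbf{F}}$, i.e. that the last letter $z_n=x_n\cdot y_n$ has infinitely many successors in $\bar\Lambda$. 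Setting $\mathrm{Fol}(\FF):=\{\mathfrak{H}\in\LAF^{1,1}:(\FF,\mathfrak{H})\text{ is allowed}\}$, the crux gives $\mathrm{Fol}(x_n)\cdot\mathfrak{G}'\subseteq\mathrm{Fol}(x_n\cdot y_n)$ for any fixed successor $\mathfrak{G}'$ of $y_n$ (which exists since $y\in\bar\Lambda$); because right multiplication by $\mathfrak{G}'$ is a bijection of the group $\LAF^{1,1}$, this shows $|\mathrm{Fol}(z_n)|\geq|\mathrm{Fol}(x_n)|$. As $x$ is a finite sequence of $\bar\Lambda$, the infinite extension property gives $|\mathrm{Fol}(x_n)|=\infty$ (recall $\bar\Lambda$ is one-step, so $\FF_1(\bar\Lambda,(x_1,\dots,x_n))=\mathrm{Fol}(x_n)$), whence $|\mathrm{Fol}(z_n)|=\infty$ and $z\in\bar\Lambda$. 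Assembling the infinite--infinite, finite--infinite, finite--finite and zero cases establishes closure of $\bar\Lambda$ under $\bullet$, and hence that $(\bar\Lambda,\bullet)$ is a shift subsemigroup of $(\Sigma_{\LAF^{1,1}}^\Z,\bullet)$.
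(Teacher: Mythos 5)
Your proposal is correct, and its algebraic engine is the same one the paper uses: both proofs come down to the multiplicativity of follower sets, $\FF_1(\Lambda,a)\cdot\FF_1(\Lambda,a')=\FF_1(\Lambda,a\cdot a')$ (Corollary \ref{LAF_and_LAP}, i.e.\ Proposition \ref{FollowerSetProduct}), which is exactly your ``crux'' that products of allowed two-blocks are allowed. The difference lies in the decomposition. The paper verifies only the infinite-sequence case, checking that $\hat x\bullet\hat y^{-1}\in\bar\Lambda^{\text{inf}}$, and dismisses all finite sequences with the single sentence ``due to the infinite extension property, it is sufficient to show\dots''; you instead treat the finite sequences head-on, and your counting argument --- right multiplication by a fixed successor $\mathfrak{G}'$ of $y_n$ is injective on the group $\LAF^{1,1}$, so $|\mathrm{Fol}(x_n\cdot y_n)|\geq|\mathrm{Fol}(x_n)|=\infty$ --- is essentially the justification the paper leaves to the reader. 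So your write-up is more self-contained on precisely the point the paper glosses, at the cost of being longer. Two minor remarks. First, because the paper runs the ``subgroup test'' with $\hat y^{-1}$, its computation also yields closure of $\bar\Lambda$ under the involution $*$, which matters since $(\bar\Lambda,\bullet)$ is subsequently used as an \emph{inverse} semigroup shift; with your product-only argument this needs one extra line: if $(x_i,x_{i+1})$ is allowed via a witness $a\in x_i$, then $a^{-1}\in x_i^{-1}$ and $\FF_1(\Lambda,a^{-1})=\FF_1(\Lambda,a)^{-1}=x_{i+1}^{-1}$, so $(x_i^{-1},x_{i+1}^{-1})$ is allowed. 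Second, in the two-sided setting a finite sequence is left-infinite, so the nonempty part of $z$ is $(z_i)_{i\leq n}$ rather than the finite word $(z_1,\dots,z_n)$, and your identification $\F_1\big(\bar\Lambda,(x_i)_{i\leq n}\big)=\mathrm{Fol}(x_n)$ tacitly uses that every letter of $\LAF^{1,1}$ has at least one successor and one predecessor (true: any $\FF\in\LAF^{1,1}$ equals $\FF_1(\Lambda,a)$ for some $a$, any $b\in\FF$ furnishes the allowed transition $(\FF,\FF_1(\Lambda,b))$, and the coset containing $a$ furnishes an allowed transition into $\FF$), so that words avoiding $\mathbf{F}$ embed into bi-infinite sequences. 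Neither point affects the correctness of your closure argument.
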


\begin{proof}
Due to the infinite extension property, it is sufficient to show that
given $\hat x,\hat y\in\bar \Lambda^{\text{inf}}$, say $\hat x=(\hat x_i)_{i\in\Z}$ and $\hat y=(\hat y_i)_{i\in\Z}$, we have that $$\hat x\bullet \hat y^{-1}=(\hat x_i\cdot \hat y_i^{-1})_{i\in\Z}\in\bar
\Lambda^{\text{inf}}.$$

By the definition of $\bar \Lambda$, for each $i\in\Z$, there exist $a,c\in \LA$ such that $$\begin{array}{lcl} \hat x_i=\FF_1(\Lambda,a) & \text{and} &\hat x_{i+1}=\FF_1(\Lambda,b)\text{ for some } b\in
\FF_1(\Lambda,a),\\
\hat y_i=\FF_1(\Lambda,c) & \text{and} & \hat y_{i+1}=\FF_1(\Lambda,d)\text{ for some } d\in  \FF_1(\Lambda,c).
\end{array}$$

So, $\hat y_i^{-1}=\FF_1(\Lambda,c^{-1})$ and $\hat y_{i+1}^{-1}=\FF_1(\Lambda,d^{-1})$, with $d\in  \FF_1(\Lambda,c)$. Hence
$$ \hat x_i\cdot\hat y_i^{-1} =\FF_1(\Lambda,a)\cdot\FF_1(\Lambda,c^{-1})=\FF_1(\Lambda,a\cdot c^{-1}).
$$

Now, notice that $b\cdot d^{-1}\in \FF_1(\Lambda,a\cdot c^{-1})$ and is such that
$$\FF_1(\Lambda,b\cdot d^{-1})=\FF_1(\Lambda,b)\cdot \FF_1(\Lambda,d^{-1})=\hat x_{i+1}\cdot \hat y_{i+1}^{-1},$$
which implies that $\hat x\bullet \hat y^{-1}\in\bar \Lambda^{\text{inf}}$.

\end{proof}

\begin{defn}\label{ch5thetadef}
Let $(\Lambda,\bullet)\subset(\Sigma_G^\Z,\bullet)$ be a 1-block inverse semigroup shift and $(\bar \Lambda,\bullet)$ be the corresponding follower-set inverse semigroup shift. Define a 1-block code $\theta:\Lambda\to\Sigma_{\LAF^{1,1}}^\Z$, given, for all $x\in\Lambda$ and for all $i\in\Z$, by
$$\big(\theta(x)\big)_i:= \left\{\begin{array}{lcl} \o &,\ if&   x_i=\o\\\\
                                                    \FF_1(\Lambda,x_i) &,\ if&  x_i\neq\o.\end{array}\right.$$

\end{defn}

The following propositions characterize the image of $\theta$ and give conditions for it to be a topological conjugacy from $\Lambda$ to $\bar\Lambda$.\\

\begin{prop}\label{theta_properties} Let $(\Lambda,\bullet)\subset(\Sigma_G^\Z,\bullet)$ be a 1-block inverse semigroup shift, where $\Lambda$ is a Markov shift, and let $(\bar \Lambda,\bullet)$ be the corresponding follower-set inverse semigroup shift. If $\FF_1(\Lambda,1_G)\cap \Ppp_1(\Lambda,1_G)=\{1_G\}$, then
the map $\theta:\Lambda\to\Sigma_{\LAF^{1,1}}^\Z$ is such that:

\begin{enumerate}

\item $\theta:\Lambda\to\bar\Lambda$ is invertible and $\theta^{-1}$ is a 2-block code with memory 1 and anticipation 0, given, for all $\G\in\bar\Lambda$ and for all $i\in\Z$. by
\begin{equation}\label{theta_inverse}\big(\theta^{-1}(\G)\big)_i:= \left\{\begin{array}{lcl}  \o &,\ if&  \G_i=\o\\\\
                                                          a_i &,\ if&   \G_i\neq\o,\end{array}\right.\end{equation}
where $a_i$ is the unique element in $\G_{i-1}$ such that $\FF_1(\Lambda,a_i)=\G_i$, for all $i\leq l(\G)$, and $a_i=\o$ for all $i>l(\G)$.

\item $\theta$ is a topological conjugacy and an isomorphism.

\end{enumerate}

\end{prop}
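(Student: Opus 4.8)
The plan is to prove both items at once by writing down an explicit candidate inverse $\Psi$ for $\theta$, namely the map given by the right-hand side of \eqref{theta_inverse}, and then checking that $\Psi$ is well defined and mutually inverse to $\theta$. Once that is done, item (i) is just reading off block sizes, and item (ii) reduces to verifying that $\theta$ is continuous with continuous inverse, commutes with $\sigma$, and respects the $1$-block operation.

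First I would make the letter map transparent. By Corollary \ref{LAF_and_LAP} the assignment $\phi\colon G\to\LAF^{1,1}$, $\phi(a):=\FF_1(\Lambda,a)$, is a surjective group homomorphism onto the group of cosets of the normal subgroup $\FF_1(\Lambda,1_G)$, and by Proposition \ref{FollowerSetProduct} one has $b\cdot\FF_1(\Lambda,1_G)=\FF_1(\Lambda,a)$ for every $b\in\FF_1(\Lambda,a)$. The crucial preliminary identity is that $\ker\phi$ coincides with $\Ppp_1(\Lambda,1_G)$: indeed $a\in\Ppp_1(\Lambda,1_G)$ means the block $(a,1_G)$ is admissible, i.e. $1_G\in\FF_1(\Lambda,a)$, and by Proposition \ref{FollowerSetProduct} (applied with $b=1_G$) this holds precisely when $\FF_1(\Lambda,a)=\FF_1(\Lambda,1_G)$, that is, when $a\in\ker\phi$.

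The heart of the argument, and the only place where the hypothesis $\FF_1(\Lambda,1_G)\cap\Ppp_1(\Lambda,1_G)=\{1_G\}$ is used, is the uniqueness claim implicit in \eqref{theta_inverse}: for $\G\in\bar\Lambda$ and $i\le l(\G)$ there is a \emph{unique} $a_i\in\G_{i-1}$ with $\FF_1(\Lambda,a_i)=\G_i$. Existence is precisely the admissibility of the transition $(\G_{i-1},\G_i)$, guaranteed by the definition \eqref{forbiddenblocks_barLambda} of $\bar\Lambda$. For uniqueness, suppose $a,a'\in\G_{i-1}$ with $\FF_1(\Lambda,a)=\FF_1(\Lambda,a')=\G_i$, and write $\G_{i-1}=\FF_1(\Lambda,c)$. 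Since $a,a'\in\FF_1(\Lambda,c)$, Proposition \ref{FollowerSetProduct} gives $a\cdot\FF_1(\Lambda,1_G)=a'\cdot\FF_1(\Lambda,1_G)=\FF_1(\Lambda,c)$, whence $a^{-1}a'\in\FF_1(\Lambda,1_G)$; since $\phi(a)=\phi(a')$ we also get $a^{-1}a'\in\ker\phi=\Ppp_1(\Lambda,1_G)$. The hypothesis then forces $a^{-1}a'=1_G$, i.e. $a=a'$. I expect this step to be the only real obstacle, everything else being bookkeeping.

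Everything else is routine. That $\theta$ maps $\Lambda$ into $\bar\Lambda$ is clear, since for $x\in\Lambda$ the transition $(\FF_1(\Lambda,x_i),\FF_1(\Lambda,x_{i+1}))$ is admissible because $x_{i+1}\in\FF_1(\Lambda,x_i)$. Defining $\Psi$ by the right-hand side of \eqref{theta_inverse} (well defined by the previous paragraph), I would check $\Psi\circ\theta=\mathrm{id}_\Lambda$ by noting that $x_i$ is itself an admissible choice for $a_i$, so uniqueness yields $a_i=x_i$; and $\theta\circ\Psi=\mathrm{id}_{\bar\Lambda}$ follows from $\FF_1(\Lambda,a_i)=\G_i$, once one observes that $\Psi(\G)\in\Lambda$ because $\Lambda$ is Markov and every transition $(\Psi(\G)_i,\Psi(\G)_{i+1})$ is admissible by construction. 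Reading off the coordinate formula shows $\Psi$ is a $2$-block code with memory $1$ and anticipation $0$, giving item (i). For item (ii), $\theta$ is a $1$-block code and $\Psi=\theta^{-1}$ a finite block code, hence both are sliding block codes between two-sided shifts and therefore continuous, so $\theta$ is a continuous bijection of compact Hausdorff spaces commuting with $\sigma$ whose inverse is continuous, i.e. a topological conjugacy. Finally, $\theta$ is an isomorphism onto $(\bar\Lambda,\bullet)$: its local rule $\phi$ is a group homomorphism (Corollary \ref{LAF_and_LAP}) with $\phi(\o)=\o$, so $\theta(x\bullet y)_i=\FF_1(\Lambda,x_i\cdot y_i)=\FF_1(\Lambda,x_i)\cdot\FF_1(\Lambda,y_i)=(\theta(x)\bullet\theta(y))_i$, and $\theta$ is a bijection onto the shift subsemigroup $(\bar\Lambda,\bullet)$ of $(\Sigma_{\LAF^{1,1}}^\Z,\bullet)$ identified in Proposition \ref{group_bar_Lambda}.
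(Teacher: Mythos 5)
Your algebraic core is sound: the identification of the kernel of $a\mapsto\FF_1(\Lambda,a)$ with $\Ppp_1(\Lambda,1_G)$, and the resulting uniqueness of $a_i$, is correct and is essentially a cleaner repackaging of the paper's own argument (the paper instead observes that every nonempty set of the form $\FF_1(\Lambda,a)\cap\Ppp_1(\Lambda,z)$ is a coset of $\FF_1(\Lambda,1_G)\cap\Ppp_1(\Lambda,1_G)=\{1_G\}$, hence a singleton). The gaps are precisely in the steps you dismiss as bookkeeping. First, ``$\theta(\Lambda)\subseteq\bar\Lambda$ is clear'' and ``$\Psi(\G)\in\Lambda$ because every transition is admissible'' are valid only for \emph{infinite} sequences. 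In this framework a shift space also contains finite sequences, and a finite sequence belongs to $X_\mathbf{F}$ only if it satisfies the infinite extension property --- avoiding forbidden blocks is not enough. For $x\in\Lambda^{\text{fin}}$ you must show $\theta(x)$ has infinitely many distinct followers in $\bar\Lambda$, and this is not automatic: the letter map $a\mapsto\FF_1(\Lambda,a)$ has fibers equal to cosets of $\Ppp_1(\Lambda,1_G)$, which need not be trivial (only its intersection with $\FF_1(\Lambda,1_G)$ is), so the infinitely many one-letter extensions of $x$ could a priori collapse onto finitely many follower sets. The paper rules this out by using the hypothesis a second time: if infinitely many followers $b$ of $x_{l(x)}$ shared one follower set $\mathfrak{J}$, then for $z\in\mathfrak{J}$ the set $\FF_1(\Lambda,x_{l(x)})\cap\Ppp_1(\Lambda,z)$ would be infinite, contradicting that it is a coset of $\{1_G\}$. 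A symmetric verification is needed for $\Psi(\G)$ when $l(\G)<\infty$, plus the case of the empty sequence; this occupies most of Steps 2 and 3 of the paper's proof and cannot be skipped.

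Second, your continuity argument rests on the claim that sliding block codes between two-sided shifts are automatically continuous. That is false over infinite alphabets: the Curtis--Hedlund--Lyndon correspondence breaks down here, and the paper itself stresses that sliding block codes are conjugacies but need \emph{extra} conditions to be topological conjugacies (compare Theorem \ref{continuityfinite} and Proposition \ref{discontinuousGroup}, where $1$-block maps with infinite fibers fail to be continuous --- and your $\theta$ has a letter map with possibly infinite fibers). The paper instead proves continuity of $\Psi=\theta^{-1}$ by invoking \cite[Theorem 3.13]{GoncalvesSobottkaStarling2015_2}: it checks that each set $C_a$ in the local rule of $\Psi$ is a pseudo cylinder $[\mathfrak{R}\ \mathfrak{T}]_{-1}^{0}$, and then continuity of $\theta$ follows since $\Psi$ is a continuous bijection from the compact space $\bar\Lambda$ to the Hausdorff space $\Lambda$. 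Your compactness observation is the right tool for getting the second map for free, but you still owe a genuine continuity proof for one of the two maps; as written, neither direction is justified.
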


\begin{proof}
{\color{white} .}

First we notice that for all $x\in\Lambda$ the sequence $\theta(x)$ is such that $l\big(\theta(x)\big)=l(x)$. Furthermore, let $\mathbf{F}$ be a set of forbidden words such that $\Lambda=X_{\mathbf{F}}$. We will break the first part of the proof into three steps:

\begin{description}

\item[Step 1:] $\theta:\Lambda\to\Sigma_{\LAF^{1,1}}^\Z$ is one-to-one.

Suppose that $x,y\in\Lambda$ are such that $\theta(x)=\theta(y)$. This means that for all $i\leq l(x)$ we have that $\FF_1(\Lambda,x_i)=\FF_1(\Lambda,y_i)$. Denote $\FF:=\FF_1(\Lambda,x_{i-1})=\FF_1(\Lambda,y_{i-1})$ and $\G:=\FF_1(\Lambda,x_i)=\FF_1(\Lambda,y_i)$. Then if $x_i,y_i\in \FF$ and, for any $z\in\G$, we have that $x_i,y_i\in\Ppp_1(\Lambda,z)=:\Ppp$, which implies that $x_i,y_i\in\FF\cap\Ppp$. But, as consequence of Proposition \ref{NormalSubgroup}, we have that $\FF_1(\Lambda,1_G)\cap \Ppp_1(\Lambda,1_G)$ is a normal subgroup of $\LA$ and therefore each nonempty set of the form $\FF\cap\Ppp$ has the same cardinality as $\FF_1(\Lambda,1_G)\cap \Ppp_1(\Lambda,1_G)$. Hence $x_i=y_i$ for all $i\leq l(x)$.

\item[Step 2:] $\theta(\Lambda)\subset\bar\Lambda$.

To prove this it is enough to show that $\theta(x)$ does not contain any forbidden block from \eqref{forbiddenblocks_barLambda}.

Indeed, given $x\in\Lambda$, for each $i\in\Z$ we have that $x_i\in \FF_1(\Lambda,x_{i-1})$ for all $i\leq l(x)$ and therefore $\FF_1(\Lambda,x_{i-1})\FF_1(\Lambda,x_i)\in \mathfrak{B}_2(\bar\Lambda)$ for all $i\leq l\big(\theta(x)\big)$. Thus, for all $x\in\Lambda^{\text{inf}}$, we have that $\bar x:=\theta(x)$ belongs to $\bar\Lambda$, while for $x\in\Lambda^{\text{fin}}$ we need additionally to check if $\bar x$ satisfies the infinite extension property in $\bar\Lambda$. We recall that due to the infinite extension property, we have that $\Lambda^{\text{fin}}$ is nonempty if and only if $\Lambda^{\text{inf}}$ has infinitely many elements (in particular $\O\in\Lambda^{\text{fin}}$ whenever $\Lambda^{\text{fin}}$ is nonempty). Hence, if $\O\in\Lambda^{\text{fin}}$ then, due to the injectivity of $\theta$, we get that $\theta(\Lambda^{\text{inf}})$ is a infinite subset of $\bar\Lambda^{\text{inf}}$, which means that $\theta(\O)=\O\in\bar\Lambda^{\text{fin}}$. Furthermore, if $x\in\Lambda^{\text{fin}}$ is not the empty sequence, then there exists $(y^n)_{n\in\N}$ such that $y^n\in\Lambda^{\text{inf}}$ for all $n$, $y^n_i=x_i$ for all $n\in\N$ and $i\leq l(x)$, and $y^m_{l(x)+1}\neq y^n_{l(x)+1}$ if $m\neq n$. Let $(\bar y^{n})_{n\in\N}$ be the sequence defined as $\bar y^n:=\theta(y^n)$ for each $n\in\N$. We have that each $y^n$ belongs to $\bar\Lambda^{\text{fin}}$. Now, since $\theta$ is a one-to-one sliding block code whose local rule has zero memory and anticipation, we have that $\bar y^n_i=\bar x_i$, for all $n\in\N$ and $i\leq l(\bar x)$. If by contradiction we suppose that $\bar y^m_{l(x)+1}= \bar y^n_{l(x)+1}$ for all except a finite number of indices $m\neq n$, then there exists an infinite set $\{y^{n_k}_{l(x)+1}\}_{k\in\N}\subset\FF_1(\Lambda,x_{l(x)})$ and $\mathfrak{J}\in\LAF^{1,1}$ such that $\FF_1(\Lambda,y^{n_k}_{l(x)+1})=\mathfrak{J}$ for all $k\in\N$. But this implies that taking $z\in\mathfrak{J}$ we have $\FF_1(\Lambda,x_{l(x)})\cap\Ppp(\Lambda,z)$ has infinite elements, a contradiction with that fact that $\FF\cap\Ppp$ is a coset of $\FF_1(\Lambda,1_G)\cap \Ppp_1(\Lambda,1_G)=\{1_G\}$. Hence, we have proved that $\bar x$ satisfies the infinite extension property in $\bar\Lambda$ and therefore $\bar x\in\bar\Lambda$.

\item[Step 3:] $\theta:\Lambda\to\bar\Lambda$ is onto.

Let $\G=(\G_i)_{i\in\Z}\in\bar\Lambda$. From the definition of $\bar\Lambda$, we have that for all $i\leq l(\G)$ there exists $a_i\in\G_{i-1}$ such that $\FF_1(\Lambda,a_i)=\G_i$. Using the same argument we used to prove that $\theta$ is one-to-one, we can prove that such element $a_i$ is unique. Thus, we can define $\Psi:\bar\Lambda\to\Sigma_A^\Z$ as the map given by the right side of \eqref{theta_inverse}. Note that $\Psi$ is length preserving and one-to-one. In fact, it is direct that $l(\G)=l\big(\Psi(\G)\big)$. Furthermore $\Psi(\G)=\O$ if and only if $\G=\O$, while if $\FF,\G\in\bar\Lambda$ are such that $\Psi(\FF)=\Psi(\G)=(a_i)_{i\in\Z}\neq\O$, then necessarily $l(\FF)=l(\G)$, $a_i\in\FF_{i-1}$ and $a_i\in\G_{i-1}$, and $\FF_1(\Lambda,a_i)=\FF_i$ and $\FF_1(\Lambda,a_i)=\G_i$ for all $i\leq l(\G)$.

    Now observe that $\O\in\bar\Lambda$ if and only if $\bar\Lambda$ has infinitely many elements (and infinitely many elements in $\bar\Lambda$ implies that $\Lambda$ also has infinitely many elements). Thus, if $\O\in\bar\Lambda^{\text{fin}}$ then $\O\in\Lambda^{\text{fin}}$, and therefore $\Psi(\O)=\O\in\Lambda$.
    On the other hand, suppose that $\G\in\bar\Lambda\setminus\{\O\}$. Then $x=\Psi(\G)$ is such that $x_i=a_i$ for each $i\leq l(x)=l(\G)$, where $a_{i-1}\in\G_{i-2}$ and $\FF_1(\Lambda,a_{i-1})=\G_{i-1}$, while $a_i\in\G_{i-1}=\FF_1(\Lambda,a_{i-1})$ and $\FF_1(\Lambda,a_i)=\G_i$. Thus, since $\Lambda$ is a Markovian shift and $a_i\in\FF_1(\Lambda,a_{i-1})$ for each $i\leq l(x)$, we have that $x$ does not contain any forbidden word of $\mathbf{F}$. Hence, if $l(\G)=\infty$, then $x\in\Lambda$. To conclude that $x=\Psi(\G)$ belongs to $\Lambda$ when $l(\G)<\infty$ we need to check that $x$ satisfies the infinite extension property. This can be made by using an argument analogous to that used to prove that $\theta(y)$ satisfies the infinite extension property for a finite sequence $y\in\Lambda$.

\end{description}

    Finally, notice that straightforwardly from the definition of $\Psi$ we have that $\Psi=\theta^{-1}$.

 To prove the second part of the proposition, notice that since $\theta$ is an invertible sliding block code between $\Lambda$ and $\bar\Lambda$, it is a conjugacy. To prove that it is a topological conjugacy, we just need to prove that $\theta$ is continuous or, equivalently, to prove that $\Psi=\theta^{-1}$ is continuous.
    Consider the local rule of $\Psi$, $$\big(\Psi(\G)\big)_n=\sum_{a\in \LA\cup\{\o\}}a\mathbf{1}_{C_a}\circ\sigma^{n}(\G).$$
    To prove that $\Psi$ is continuous, by Theorem 3.13 of \cite{GoncalvesSobottkaStarling2015_2}, it is enough to show that each set $C_a$ is a pseudo cylinder. From the definition of $\Psi$ we have that $\big(\Psi(\G)\big)_n=a$, if $a\in\G_{n-1}$, and $\FF_1(\Lambda,a)=\G_n$. Denote $\mathfrak{R}:=\G_{n}$ and $\mathfrak{T}:=\G_n$. Since,  $\LAF^{1,1}$ is a family of disjoint sets, then $\mathfrak{R}$ is the unique set of $\LAF^{1,1}$ which contains $a$ and therefore $C_a$ is the pseudo-cylinder $[\mathfrak{R}\ \mathfrak{T}]_{-1}^0$.

    Now, let $x,y\in\Lambda$, $\FF:=\theta(x)=\big(\FF_1(\Lambda,x_i)\big)_{i\in\Z}$ and $\G:=\theta(y)=\big(\FF_1(\Lambda,y_i)\big)_{i\in\Z}$. We have that
    $$\begin{array}{lcl}\theta(x\bullet y)&=& \theta\big((x_i\cdot y_i)_{i\in\Z}\big)\\\\
    &=&\big(\FF_1(\Lambda,x_i\cdot y_i)\big)_{i\in\Z}\\\\
    &=&\big(\FF_1(\Lambda,x_i)\cdot\FF_1(\Lambda,y_i)\big)_{i\in\Z}\\\\
    &=&\big(\FF_1(\Lambda,x_i)\big)_{i\in\Z}\bullet\big(\FF_1(\Lambda,y_i)\big)_{i\in\Z}
    =\theta(x)\bullet\theta(y),\end{array}$$
    which shows that $\Psi$ is an isomorphism between $(\Lambda,\bullet)$ and $(\bar\Lambda,\bullet)$.

%PROVA DIRETA (FALTANDO REVISAR)
%
%Let $\G=(\G_i)_{i\in\Z}\in\Sigma_{\LAF^{1,1}}^{\Z\ \text{fin}}\setminus\{\O\}$ and let $Z_{\bar\Lambda}(\G,F)$ be the correspondent generalized cylinder of $\bar\Lambda$. If $\theta^{-1}\big(Z_{\bar\Lambda}(\G,F)\big)\neq\emptyset$, then define $E\subseteq G$ given by $$E:=\{a\in G:\ a\in\G_{l(\G)}\ and\  \FF_1(\Lambda,a)\in F\}.$$ Since $F$ is finite, it follows that $E$ is also finite. Also define $x\in\Lambda^{\text{fin}}$ such that $l(x)=l(\G)$, where $x_i\in\G_{i-1}$ and $\FF_1(\Lambda,x_i)=\G_i$ for all $i\leq l(\G)$. Hence,
%
%
%$$\theta^{-1}\big(Z_{\bar\Lambda}(\G,F)\big)=\{a=(a_i)_{i\in\Z}\in\Lambda:\ a_i\in\G_{i-1}\ and\ \FF_1(\Lambda,a_i)=\G_i\ \forall i\leq l(\G),\ and\ a_{i+1}\notin E\}=Z_\Lambda(x,E).$$
%
%On the other hand, given $\G^1,\G^2,\ldots,\G^m\in\Sigma_{\LAF^{1,1}}^{\Z\ \text{fin}}\setminus\{\O\}$, then we have $$\theta^{-1}\Big(Z^c_{\bar\Lambda}(\G^1,\ldots,\G^m)\Big)= \theta^{-1}\left(\Big(\bigcup_{j=1}^m Z_{\bar\Lambda}(\G^j)\Big)^c\right)= \bigcap_{j=1}^m \Big(\theta^{-1}\big(Z_{\bar\Lambda}(\G^j)\big)\Big)^c.$$

\end{proof}

\begin{rmk}  Note that we could not assure that $\bar\Lambda\subset\theta(\Lambda)$ without the assumption that $\Lambda$ is a Markov shift. For instance, if we consider the inverse semigroup shifts in examples \ref{Example_F_infinite1} and \ref{Example_F_infinite1}, then $|\FF(\Lambda,1_{\LA})|=\LA$ and then $\bar\Lambda$ contains only the constant sequence $(\FF_i)_{i\in\Z}$, with $\FF_i=\FF(\Lambda,1_{\LA})$. However $\theta(\Lambda)$ contains the sequence $(\FF_i)_{i\in\Z}$ and also contains finite sequences $\G=(\G_i)_{i\in\Z}$ of all lengths such that  $\G_i=\FF(\Lambda,1_{\LA})$ for all $i\leq l(\G)$.
\end{rmk}

Now, let \begin{equation}\label{H_defn}\h:=\FF_1(\Lambda,1_G)\cap\Ppp_1(\Lambda,1_G),\end{equation} which is always non empty since $1_G\in\h$. Notice that due to Proposition \ref{NormalSubgroup} $\h$ is a
normal subgroup of $\LA$. Let
\begin{equation}\label{LAH}\LAH:=\LA/_{\h}=\{a\cdot\h:\ a\in \LA\},\end{equation} which is also a group with the operation (which we will also denote as $\cdot$) induced by the operation on $\LA$.

\begin{defn} Given a Markovian 1-block inverse semigroup shift $\Lambda$, we define $\bar\Lambda^{[0]}=\Lambda$ and, for $n\geq 1$, we define $\bar\Lambda^{[n]}$ as the follower-set shift of $\bar\Lambda^{[n-1]}$. Denote as
$\h^{[n]}:=\FF_1(\bar\Lambda^{[n]},1^{[n]})\cap \Ppp_1(\bar\Lambda^{[n]},1^{[n]})$, which is a normal subgroup of the group alphabet $\LA^{[n]}:=\mathfrak{B}_1(\bar\Lambda^{[n]})$.
\end{defn}

\begin{defn}\label{defn_fractal}
A Markovian 1-block inverse semigroup shift $\Lambda$ will be said to be {\em fractal shift} if, for all $n\geq 0$, we have that $\h^{[n]}$ is a singleton. In the particular case that $\bar\Lambda^{[1]}=\Lambda$ we will say that
$\Lambda$ is {\em self similar} and, if $\bar\Lambda^{[n]}=\bar\Lambda^{[n-1]}$ for some $n\geq 2$, we will say that $\Lambda$ is {\em self similar at level $n$}.
\end{defn}

We note that fractal shift spaces are exactly those for which we can apply the map $\theta$ infinitely many consecutive times, getting always a shift which is conjugate to $\Lambda$.

\begin{ex}
Let $G$ be a finite group, let $\Lambda\subset\Sigma_G^\Z$ be a Markovian 1-block inverse semigroup shift and suppose that $\Lambda$ is fractal. Then $\Lambda$ contains only constant sequences over the alphabet group
$\LA$.

In fact, since the alphabet is finite, we have that the numbers $a_n:=|\bar\Lambda^{[n]}|$, $n\geq 0$, form a non-increasing sequence. Therefore, $\Lambda$ shall be self similar at level $n$ for some $n$. But this implies
that $a_{n+1}=a_n$, which occurs if, and only if, $\FF_1(\bar\Lambda^{[n]},1^{[n]})$ is a singleton. Hence, since $\bar\Lambda^{[n]}$ is a group and $\FF_1(\bar\Lambda^{[n]},1^{[n]})$ is a singleton, we have that
$\bar\Lambda^{[n]}$ contains only constant sequences, and therefore (going backward from $\bar\Lambda^{[n]}$ to $\Lambda$) we get that $\Lambda$ contains only constant sequences over the subgroup $\LA\subset G$.

In particular, the above means that fractal shifts over finite alphabets are always self similar.
\end{ex}

\begin{ex}
The Markovian 1-block inverse semigroup shift given in Example \ref{ex_fractal} is a self similar fractal shift over an infinite alphabet.
\end{ex}

Now, let $S:\LAH\to\LA$ be an arbitrary section of
$\LAH$, i.e., an arbitrary map such that,
for all $\h_1\in\LAH$, $S(\h_1)\in\h_1$. It follows that:

\begin{prop} {\color{white}.}
\begin{enumerate}
\item For all $a\in\LA$, $(S(a\cdot\h)^{-1}\cdot a)\in\h$.

\item The map $\varphi:\LA\to\LAH\times\h$ given
 by $\varphi(a)=\bigl(a\cdot\h,S(a\cdot\h)^{-1}\cdot a\bigr)$ is a
 bijection. Moreover, $\varphi^{-1}:\LAH\times\h\to\LA$ is given
 by $\varphi^{-1}(a\cdot\h,h)=g$, where $g\in\LA$ is the unique
 element such that $S(a\cdot\h)^{-1}\cdot g=h$.

\item The group $(\LA,\bullet)$ is isomorphic to
 $(\LAH\times\h,\diamond)$ through the map $\varphi$, where the operation $\diamond$ is given by $$(\h_1,h_1)\diamond(\h_2,h_2):=\varphi\bigl[\varphi^{-1}(\h_1,h_1)\cdot\varphi^{-1}(\h_2,h_2)\bigr].$$
\end{enumerate}
\end{prop}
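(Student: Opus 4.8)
The plan is to handle the three parts in the order stated, since each feeds into the next, and to note at the outset that the whole proposition is an instance of \emph{transport of group structure} along a set bijection: once the bijection of part (ii) is in hand, part (iii) is essentially formal. Throughout I would work with the group operation $\cdot$ on $\LA$ and keep careful track of the fact that $\h$ is a (normal, by Proposition \ref{NormalSubgroup}) subgroup and that $S$ is merely a section of the quotient map $\LA\to\LAH$.

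For part (i) I would simply unwind the definition of the section. Since $S(a\cdot\h)$ lies in the coset $a\cdot\h$, we can write $S(a\cdot\h)=a\cdot h_0$ for some $h_0\in\h$; then $S(a\cdot\h)^{-1}\cdot a = h_0^{-1}\cdot a^{-1}\cdot a = h_0^{-1}\in\h$ because $\h$ is a subgroup. This single computation is what guarantees that the second coordinate of $\varphi$ actually lands in $\h$, so it must come first, as it makes $\varphi$ well-typed.

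For part (ii) I would verify injectivity and surjectivity and exhibit the inverse at the same time. For injectivity, if $\varphi(a)=\varphi(b)$ then the first coordinates agree, so $a\cdot\h=b\cdot\h$ and hence $S(a\cdot\h)=S(b\cdot\h)$; equality of the second coordinates then reads $S(a\cdot\h)^{-1}\cdot a = S(a\cdot\h)^{-1}\cdot b$, and left-cancellation forces $a=b$. For surjectivity and the inverse formula simultaneously, given $(\h_1,h)\in\LAH\times\h$ I would set $g:=S(\h_1)\cdot h$ and check directly that $\varphi(g)=(\h_1,h)$: the first coordinate is $g\cdot\h = S(\h_1)\cdot h\cdot\h = S(\h_1)\cdot\h=\h_1$ (using $S(\h_1)\in\h_1$ and $h\in\h$), while the second is $S(\h_1)^{-1}\cdot g = h$ by construction. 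Uniqueness of such a $g$ is again just left-cancellation, which matches the stated description of $\varphi^{-1}$.

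For part (iii) the operation $\diamond$ is defined to be the $\varphi$-pushforward of $\cdot$, so there is nothing substantial to prove: for all $a,b\in\LA$ we have $\varphi(a)\diamond\varphi(b) = \varphi\bigl[\varphi^{-1}(\varphi(a))\cdot\varphi^{-1}(\varphi(b))\bigr] = \varphi(a\cdot b)$, so $\varphi$ is a homomorphism, and being a bijection by part (ii) it is an isomorphism. That $(\LAH\times\h,\diamond)$ is a group at all is automatic from transport of structure along $\varphi$, so I would state this rather than verify associativity, identity, and inverses by hand. The main obstacle here is bookkeeping rather than genuine difficulty: the only place something could slip is confusing which representative of a coset $S$ selects, so I would consistently invoke $S(a\cdot\h)=S(b\cdot\h)$ whenever $a\cdot\h=b\cdot\h$ and apply left-cancellation. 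Note that normality of $\h$ is used only to ensure $\LAH$ is a group in the first place; the bijection and the transported operation use nothing beyond $\h$ being a subgroup and $S$ being a section.
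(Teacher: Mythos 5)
Your proof is correct and takes essentially the approach the paper intends: the paper gives no details itself, deferring to Proposition 4.17 of \cite{sobottka2007}, which is exactly this section-and-coset, transport-of-structure computation (part (i) making $\varphi$ well-defined, left-cancellation for injectivity, $g=S(\h_1)\cdot h$ for surjectivity, and formal pushforward for part (iii)). Your closing remark that normality of $\h$ is needed only for $\LAH$ to be a group, not for the bijection or the transported operation, is also accurate.
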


\begin{proof}

The proof is similar to the proof of Proposition 4.17 in \cite{sobottka2007}.

\end{proof}

We can alternatively write
$$(\h_1,h_1)\diamond(\h_2,h_2)=\bigl(\h_1\cdot\h_2,
 S(\h_1\cdot\h_2)^{-1}\cdot(g_1\cdot g_2)\bigr),$$
where $g_1,g_2\in\LA$ are the unique elements which satisfy
$$S(\h_1)^{-1}\bullet g_1=h_1,$$
$$S(\h_2)^{-1}\bullet g_2=h_2.$$

Note that on the first coordinate, the operation $\diamond$ coincides with the operation
$\cdot$ on $\LAH$.

\begin{defn} Define the two-sided Markov shift $\hat\Lambda\subset\Sigma_{\LAH}^\Z$, as the shift such that the infinite sequences are given by transitions:

$$\h_1\to\h_2\Longleftrightarrow\h_2\subseteq\F_1(\Lambda,a)\quad\text{ for\ all\ }a\in\h_1 .$$
\end{defn}

Note that the transitions above are well defined (independently of $a\in\h_1$ chosen). In fact, $\h_1=a'\cdot\h=a'\cdot\F_1(\Lambda,1_G)\cap\Pp_1(\Lambda,1_G)=\F_1(\Lambda,b)\cap\Pp_1(\Lambda,c)$ for some $b,c\in\LA$, and
therefore all the elements of $\h_1$ belong to $\Pp_1(\Lambda,c)$, which means that all of them have the same follower set.

Let $\star$ be the 1-block inverse semigroup operation on $\hat\Lambda\boxtimes\Sigma_\h^\Z$ induced by $\diamond$.
Recall that $\h$ is the identity of the group $\hat\Lambda$.

\begin{prop} $\hat\Lambda$ has the property that $\FF_1(\hat\Lambda,\h)\cap\Ppp_1(\hat\Lambda,\h)=\{\h\}$.
\end{prop}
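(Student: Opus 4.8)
The plan is to compute the two one-sided neighbour sets $\FF_1(\hat\Lambda,\h)$ and $\Ppp_1(\hat\Lambda,\h)$ separately, identify each with the image of a normal subgroup of $\LA$ under the quotient map $\LA\to\LAH=\LA/\h$, and then intersect the two images inside $\LAH$. Throughout, write $F:=\FF_1(\Lambda,1_G)$ and $P:=\Ppp_1(\Lambda,1_G)$; by Proposition \ref{NormalSubgroup} both are normal subgroups of $\LA=G$, and by \eqref{H_defn} we have $\h=F\cap P$. The single fact I really want to extract from Theorem \ref{FollowerSetProduct} (case $n=k=1$) is this: each $\FF_1(\Lambda,a)$ is one coset of $F$, namely $b\cdot F$ for any follower $b\in\FF_1(\Lambda,a)$; consequently $1_G\in\FF_1(\Lambda,a)$ forces $\FF_1(\Lambda,a)=1_G\cdot F=F$, and $1_G\in\FF_1(\Lambda,a)$ holds if and only if $a\,1_G\in B(\Lambda)$, i.e. if and only if $a\in\Ppp_1(\Lambda,1_G)=P$. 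Since every $\h_i\in\LAH$ is a coset of $\h$ inside $G$ (carrying no empty letter), the condition $\h_2\subseteq\F_1(\Lambda,a)$ is the same as $\h_2\subseteq\FF_1(\Lambda,a)$, and because $\h\subseteq F$ a coset $\h_2$ of $\h$ lies in $F$ exactly when a representative of $\h_2$ lies in $F$.

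For the follower set I would argue as follows. Since $\h\subseteq P$, the observation above gives $\F_1(\Lambda,a)=F$ for every $a\in\h$. Hence, by the definition of $\hat\Lambda$, the transition $\h\to\h_2$ is allowed if and only if $\h_2\subseteq F$, so $\FF_1(\hat\Lambda,\h)$ is precisely the collection of $\h$-cosets contained in $F$, that is, the image $F/\h$ of $F$ under $\LA\to\LAH$.

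For the predecessor set, $\h_1\to\h$ is allowed if and only if $\h\subseteq\F_1(\Lambda,a)$ for all $a\in\h_1$. As $1_G\in\h$, this forces $1_G\in\F_1(\Lambda,a)$; conversely $1_G\in\F_1(\Lambda,a)$ yields $\F_1(\Lambda,a)=F\supseteq\h$. So the requirement on each $a\in\h_1$ is exactly $a\in P$, and therefore $\Ppp_1(\hat\Lambda,\h)=P/\h$, the image of $P$ in $\LAH$. In particular $\h$ itself lies in both $F/\h$ and $P/\h$, since $\h\subseteq F$ and $\h\subseteq P$. To finish, I would invoke the correspondence theorem for the normal subgroups $F,P\supseteq\h$ of $G$: their images are subgroups of $\LAH$ whose intersection is $(F/\h)\cap(P/\h)=(F\cap P)/\h=\h/\h=\{\h\}$, which is exactly the asserted equality.

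The one genuinely delicate point — and the place a direct attack is most likely to go wrong — is resisting the tempting but false identification $\F_1(\Lambda,a)=a\cdot F$. The correct statement is that $\F_1(\Lambda,a)$ is the coset $b\cdot F$ pinned down by a follower $b$ of $a$, and the right membership test ``$\F_1(\Lambda,a)=F\iff a\in P$'' is precisely what makes the follower and predecessor sets of $\h$ come out as $F/\h$ and $P/\h$. Once that asymmetry is handled correctly, the remainder is routine coset bookkeeping, together with the harmless remark that empty letters never intervene because the coset-alphabet $\LAH$ sits inside $G$.
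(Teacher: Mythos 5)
Your proof is correct and rests on the same mechanism as the paper's: reducing transitions of $\hat\Lambda$ to the membership test $\F_1(\Lambda,a)=\FF_1(\Lambda,1_G)\iff a\in\Ppp_1(\Lambda,1_G)$ (a consequence of Proposition \ref{FollowerSetProduct} with $n=k=1$), and then invoking the fact that distinct cosets of $\h$ are disjoint. The only difference is packaging: the paper argues pointwise that any $\mathcal{J}\in\FF_1(\hat\Lambda,\h)\cap\Ppp_1(\hat\Lambda,\h)$ satisfies $\mathcal{J}\subseteq\FF_1(\Lambda,1_G)\cap\Ppp_1(\Lambda,1_G)=\h$ and hence equals $\h$, whereas you compute both sets outright as $\FF_1(\Lambda,1_G)/\h$ and $\Ppp_1(\Lambda,1_G)/\h$ and intersect them inside $\LAH$ -- a slightly stronger conclusion reached by the same means.
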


\begin{proof}
Since $1_G\in\h=\FF_1(\Lambda,1_G)\cap\Ppp_1(\Lambda,1_G)$, then the transition $\h\to\h$ is allowed in $\hat\Lambda$ and therefore $\h\in \FF_1(\hat\Lambda,\h)\cap\Ppp_1(\hat\Lambda,\h)$. Now, let $\mathcal{J}\in \FF_1(\hat\Lambda,\h)\cap\Ppp_1(\hat\Lambda,\h)$. Then the transitions $\h\to\mathcal{J}\to\h$ are allowed in $\hat\Lambda$, which means that $\mathcal{J}\subset\FF_1(\Lambda,1_G)$ and, for any $a\in \mathcal{J}$, we have that $\h\subset \FF_1(\Lambda,a)$, which in turn means that $a\in\Ppp_1(\Lambda,1_G)$. In other words, if $a\in\mathcal{J}$ then $a\in\FF_1(\Lambda,1_G)\cap\Ppp_1(\Lambda,1_G)=\h$, that is $\mathcal{J}\subset\h$. Since the sets of $\hat\Lambda$ are cosets of $\h$, it follows that $\mathcal{J}=\h$.

\end{proof}

\begin{defn} Let $(\Lambda,\bullet)\subset(\Sigma_G^\Z,\bullet)$ be a 1-block inverse semigroup shift. Define the 1-block code $\phi:\Lambda\to\Sigma_{L_{\hat\Lambda}}^\Z\boxtimes\Sigma_\h^\Z$, for all $x\in\Lambda$ and for all
$i\in\Z$, by

\begin{equation}\bigl(\phi(x)\bigr)_i:=\left\{\begin{array}{ll}\o  & \text{if } x_i=\o\\\\
                                                                \varphi(x_i)=\big(x_i\cdot\h,S(x_i\cdot\h)^{-1}\cdot x_i\big)& \text{if } x_i\neq\o.\end{array}\right.\end{equation}

\end{defn}

\begin{prop}\label{phi_properties}
Let $(\Lambda,\bullet)$ be a Markovian 1-block inverse semigroup shift. Then  $\phi:\Lambda\to\hat\Lambda\boxtimes\Sigma_\h^\Z$ is a topological conjugacy and an isomorphism between $(\Lambda,\bullet)$ and $(\hat\Lambda\boxtimes\Sigma_\h^\Z,\star)$. In particular, given $\mathbf y =(\mathbf y_i)_{i\in\Z}\in \hat\Lambda\boxtimes\Sigma_\h^\Z$, we have that \begin{equation}\label{phi_inverse}\bigl(\phi^{-1}(\mathbf y)\bigr)_i:=\left\{\begin{array}{ll}\o  & \text{if } \mathbf y_i=\o\\\\
                                                                \varphi^{-1}(\mathbf y_i)& \text{if } \mathbf y_i\neq\o.\end{array}\right.\end{equation}
\end{prop}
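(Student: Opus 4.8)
The plan is to recognise that $\phi$ is simply the letter-to-letter application of the bijection $\varphi\colon\LA\to\LAH\times\h$ constructed in the previous proposition (with $\o\mapsto\o$), so that the whole statement reduces to showing that $\phi$ restricts to a bijection $\Lambda\to\hat\Lambda\boxtimes\Sigma_\h^\Z$ and that both $\phi$ and $\phi^{-1}$ are continuous; the algebraic part then comes for free. Indeed, since $\bullet$ and $\star$ are the $1$-block operations induced coordinatewise from $\cdot$ on $\LA$ and from $\diamond$ on $\LAH\times\h$, and since the previous proposition gives $\varphi(a\cdot b)=\varphi(a)\diamond\varphi(b)$, we get $\big(\phi(x\bullet y)\big)_i=\varphi(x_i\cdot y_i)=\varphi(x_i)\diamond\varphi(y_i)=\big(\phi(x)\star\phi(y)\big)_i$ for every $i$ (the coordinates with $x_i=\o$ or $y_i=\o$ being sent to $\o$ on both sides, as $\o$ is the zero of each operation). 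Thus $\phi$ is multiplicative, and it intertwines the two shift maps automatically because it has memory and anticipation $0$.

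For bijectivity I would first check $\phi(\Lambda^{\text{inf}})\subseteq(\hat\Lambda\boxtimes\Sigma_\h^\Z)^{\text{inf}}$. The second coordinate of $\phi(x)$ always lands in $\h^\Z$ (no constraint there), so the only point is that the first-coordinate sequence $(\h_i)_{i\in\Z}:=(x_i\cdot\h)_{i\in\Z}$ is an allowed path in $\hat\Lambda$. Fix $i$ and any $a=x_i\cdot h\in\h_i$ with $h\in\h\subseteq\FF_1(\Lambda,1_G)$; Theorem \ref{FollowerSetProduct} (which makes $\FF_1(\Lambda,1_G)$ a group and each $\FF_1(\Lambda,c)$ one of its cosets) gives $\FF_1(\Lambda,a)=a\cdot\FF_1(\Lambda,1_G)=x_i\cdot\FF_1(\Lambda,1_G)=\FF_1(\Lambda,x_i)$. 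As $\Lambda$ is Markov we have $x_{i+1}\in\FF_1(\Lambda,x_i)$, hence $\h_{i+1}=x_{i+1}\cdot\h\subseteq x_{i+1}\cdot\FF_1(\Lambda,1_G)=\FF_1(\Lambda,x_i)=\FF_1(\Lambda,a)$, which is exactly the transition $\h_i\to\h_{i+1}$ demanded by the definition of $\hat\Lambda$. Conversely, given $\mathbf y\in(\hat\Lambda\boxtimes\Sigma_\h^\Z)^{\text{inf}}$ with $\mathbf y_i=(\h_i,h_i)$, I would set $x_i:=\varphi^{-1}(\mathbf y_i)=S(\h_i)\cdot h_i\in\h_i$; the allowed transition $\h_i\to\h_{i+1}$, applied to the representative $a=x_i\in\h_i$, forces $x_{i+1}\in\h_{i+1}\subseteq\FF_1(\Lambda,x_i)$, so every consecutive pair lies in $\mathfrak B_2(\Lambda)$ and, $\Lambda$ being Markov, $x\in\Lambda^{\text{inf}}$. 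Since $\varphi$ and $\varphi^{-1}$ are mutually inverse on letters, these two inclusions show $\phi$ is a length-preserving bijection between infinite sequences, with inverse given by \eqref{phi_inverse}. The finite sequences are then dealt with exactly as in Proposition \ref{theta_properties}: $\phi$ preserves lengths and injectivity, and one checks that the image of a finite sequence still enjoys the infinite extension property (using that $\Lambda^{\text{fin}}$ is nonempty iff $\Lambda^{\text{inf}}$ is infinite), and symmetrically for $\phi^{-1}$.

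It remains to prove that $\phi$ and $\phi^{-1}$ are continuous. Both are $1$-block codes, so by Theorem 3.13 of \cite{GoncalvesSobottkaStarling2015_2} it is enough to verify that each preimage of a letter is a pseudo cylinder: for $\phi$ the level set $\{x:(\phi(x))_0=(\h',h')\}$ equals the one-coordinate cylinder $[\,\varphi^{-1}(\h',h')\,]_0^0$, and for $\phi^{-1}$ the set $\{\mathbf y:(\phi^{-1}(\mathbf y))_0=g\}$ equals $[\,\varphi(g)\,]_0^0$, both pseudo cylinders, with the $\o$-preimage shift-invariant. Combining the bijection, the intertwining with $\s$, and continuity in both directions yields the topological conjugacy, and with the multiplicativity established above we conclude that $\phi$ is also an isomorphism. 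I expect the genuine obstacle to be not any single hard estimate but the bookkeeping for finite sequences — matching lengths and the infinite extension property across the two coupled coordinates of the product shift — while the conceptual heart is the observation that the transition rule defining $\hat\Lambda$ is precisely what the coset structure of the follower sets (Theorem \ref{FollowerSetProduct}, Corollary \ref{LAF_and_LAP}) forces.
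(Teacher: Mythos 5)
Your overall route is the same as the paper's: both treat $\phi$ as the letterwise application of the bijection $\varphi$, verify multiplicativity coordinatewise, check that images in both directions avoid forbidden words and that finite sequences retain the infinite extension property (deferring to the argument of Proposition \ref{theta_properties}), and obtain continuity of the $1$-block codes from Theorem 3.13 of \cite{GoncalvesSobottkaStarling2015_2}; the paper only sketches these steps, so your added detail on the transition structure is in principle welcome. However, the key identity you use to show $\phi(\Lambda^{\text{inf}})\subseteq(\hat\Lambda\boxtimes\Sigma_\h^\Z)^{\text{inf}}$ is false: you claim $\FF_1(\Lambda,a)=a\cdot\FF_1(\Lambda,1_G)$, i.e.\ that the follower set of $a$ is the coset containing $a$ itself. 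Proposition \ref{FollowerSetProduct} only gives $\FF_1(\Lambda,a)=b\cdot\FF_1(\Lambda,1_G)$ for $b\in\FF_1(\Lambda,a)$ --- the coset representative must be a \emph{follower} of $a$, not $a$. Your identity amounts to asserting $a\in\FF_1(\Lambda,a)$, i.e.\ that $aa$ is always an allowed $2$-block, and this fails even under the hypotheses of the present proposition: in Example \ref{ex_fractal} one has $\FF_1(\Lambda,[1,1])=[1,2]\cdot H=\{[1,2],[3,2]\}$, which does not contain $[1,1]=[2,2]$, so $\FF_1(\Lambda,[1,1])\neq[1,1]\cdot H$. (Note that your later use of the coset identity, $x_{i+1}\cdot\FF_1(\Lambda,1_G)=\FF_1(\Lambda,x_i)$, is legitimate, because there $x_{i+1}$ genuinely is a follower of $x_i$.)

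The conclusion you were after --- that every $a$ in the coset $\h_i=x_i\cdot\h$ satisfies $\FF_1(\Lambda,a)=\FF_1(\Lambda,x_i)$ --- is true, but needs a different justification. Either invoke the paper's remark immediately following the definition of $\hat\Lambda$ (well-definedness of the transitions, proved there via predecessor sets), or argue directly: for $h\in\h\subseteq\Ppp_1(\Lambda,1_G)$ we have $1_G\in\FF_1(\Lambda,h)$, and also $1_G\in\FF_1(\Lambda,1_G)$ because $e^\infty\in\Lambda$; since both sets are cosets of the subgroup $\FF_1(\Lambda,1_G)$ and they intersect, $\FF_1(\Lambda,h)=\FF_1(\Lambda,1_G)$, whence Corollary \ref{LAF_and_LAP}.ii gives $\FF_1(\Lambda,x_i\cdot h)=\FF_1(\Lambda,x_i)\cdot\FF_1(\Lambda,h)=\FF_1(\Lambda,x_i)\cdot\FF_1(\Lambda,1_G)=\FF_1(\Lambda,x_i\cdot 1_G)=\FF_1(\Lambda,x_i)$. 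With that substitution your verification of the allowed transitions (and hence of surjectivity onto $(\hat\Lambda\boxtimes\Sigma_\h^\Z)^{\text{inf}}$, where your converse step is already correct) goes through, and the rest of your proof matches the paper's sketch.
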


Since the proof of Proposition \ref{phi_properties} follows the same outline of the proof of Proposition \ref{theta_properties}, we will just give a sketch of its proof.

\begin{proof}[Sketch of the proof of Proposition \ref{phi_properties}]{\color{white}.}

\begin{enumerate}[label=\arabic*.]

\item Note that for $\phi:\Lambda\to\Sigma_{L_{\hat\Lambda}}^\Z\boxtimes\Sigma_\h^\Z$ we have that for all $x=(x_i)_{i\in\Z}$, if $\mathbf y:=\phi(x)$, then $l(\mathbf y)=l(x)$;

\item Observe that $\mathbf y$ does not contain forbidden words of $\hat\Lambda\boxtimes\Sigma_\h^\Z$, which implies that $\mathbf y\in \hat\Lambda\boxtimes\Sigma_\h^\Z$ whenever $l(\mathbf y)=\infty$;

\item Note that since $\phi$ is a sliding block code whose local rule is 1-block and a bijection between $\LA$ and $L_{\hat\Lambda}$, then $\phi$ is one-to-one;

\item Use the same argument used to proof of Proposition \ref{theta_properties} to check that if $l(\mathbf y)<\infty$ then $\mathbf y=\phi(x)$ satisfies the infinite extension property in $\hat\Lambda\boxtimes\Sigma_\h^\Z$. Thus, we get $\phi(\Lambda)\subset \hat\Lambda\boxtimes\Sigma_\h^\Z$.

\item Define $\Psi:\hat\Lambda\boxtimes\Sigma_\h^\Z\to\Sigma_G^\Z$ and show that, for all $\mathbf y\in \hat\Lambda\boxtimes\Sigma_\h^\Z$, $\Psi(\mathbf y)$ does not contain any forbidden word of $\Lambda$;

\item Use that $\Psi$ is one-to-one to get that $\Psi(\mathbf y)\in\Lambda$, for all $\mathbf y\in \hat\Lambda\boxtimes\Sigma_\h^\Z$.

\item Observe that $\Psi\big(\phi(x)\big)=x$ and thus we have that $\phi:\Lambda\to\hat\Lambda\boxtimes\Sigma_\h^\Z$ is a bijection and $\phi^{-1}=\Psi$;

\item Directly from the definitions of $\star$ and $\phi$ we get that $\phi$ is an isomorphism between $(\Lambda,\bullet)$ and $(\hat\Lambda\boxtimes\Sigma_\h^\Z,\star)$;

\item Note that $\phi:\Lambda\to\hat\Lambda\boxtimes\Sigma_\h^\Z$ satisfies the hypotheses of \cite[Theorem 3.13]{GoncalvesSobottkaStarling2015_2} to conclude that $\phi$ is continuous and then it is a topological conjugacy.

\end{enumerate}

\end{proof}

\begin{theo}\label{lastsectionmaintheo}
If $(\Lambda,\bullet)$ is a two-sided Markovian 1-block inverse semigroup shift, then it is isomorphic and topologically conjugate, via a sliding block code with zero memory and anticipation, to a two-sided Markovian inverse
semigroup shift $(\mathbb{F}\boxtimes \Sigma_B^\Z,\star)$, where $\mathbb{F}$ is a fractal shift, $\Sigma_B^\Z$ is a full shift over some alphabet $B$ (finite or infinite) and $\star$ is a block operation with anticipation 0 and memory $k$, for some
$k\geq 0$.
\end{theo}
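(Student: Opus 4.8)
The plan is to iterate the two decomposition results already established, namely Proposition \ref{theta_properties} (the follower-set conjugacy $\theta$, available whenever the intersection $\FF_1\cap\Ppp_1$ of the identity is trivial) and Proposition \ref{phi_properties} (the splitting $\phi\colon\Lambda\to\hat\Lambda\boxtimes\Sigma_\h^\Z$, which peels off one full-shift factor). First I would apply Proposition \ref{phi_properties} to write $\Lambda\cong\hat\Lambda\boxtimes\Sigma_{\h}^\Z$ with $\h=\FF_1(\Lambda,1_G)\cap\Ppp_1(\Lambda,1_G)$; here $\phi$ is a $1$-block code (zero memory and anticipation) with $1$-block inverse, and, by the proposition preceding Definition \ref{ch5thetadef}, the factor $\hat\Lambda$ satisfies $\FF_1(\hat\Lambda,\h)\cap\Ppp_1(\hat\Lambda,\h)=\{\h\}$, i.e.\ $\h^{[0]}(\hat\Lambda)$ is a singleton. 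If $\hat\Lambda$ were itself fractal we would be done with $\mathbb F=\hat\Lambda$ and $B=\h$; the crucial point, however, is that one application of $\phi$ need \emph{not} produce a fractal shift (for example the full shift over $\Z/2$, presented over $\Z/2\times\Z/2$ with the transition forcing one coordinate to be the shift of the other, already has trivial $\h^{[0]}$ yet is plainly not fractal).

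Consequently I would build the fractal decomposition by alternating the two operations. Since $\h^{[0]}(\hat\Lambda)$ is a singleton, Proposition \ref{theta_properties} gives a conjugacy and isomorphism $\theta\colon\hat\Lambda\to\bar{\hat\Lambda}$ (with $\theta$ a $1$-block code and $\theta^{-1}$ a $2$-block code of memory $1$ and anticipation $0$). The shift $\bar{\hat\Lambda}$ is again a Markovian $1$-block inverse semigroup shift, so I apply Proposition \ref{phi_properties} to it to peel off the next full-shift factor $\Sigma_{\h_1}^\Z$ with $\h_1:=\FF_1(\bar{\hat\Lambda},1)\cap\Ppp_1(\bar{\hat\Lambda},1)$, obtaining $\hat{\bar{\hat\Lambda}}$, which again has singleton $\h^{[0]}$. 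Repeating this produces exactly the tower $\hat\Lambda,\ \bar{\hat\Lambda},\ \hat{\bar{\hat\Lambda}},\ \bar{\hat{\bar{\hat\Lambda}}},\dots$ recorded in the paper's notation, together with the peeled groups $\h_0=\h,\h_1,\h_2,\dots$. Using associativity (and commutativity up to conjugacy) of $\boxtimes$, together with the fact that a $\boxtimes$-product of full shifts is the full shift over the product alphabet, the accumulated factors combine into a single $\Sigma_B^\Z$ with $B=\prod_i\h_i$, while the terminal shift of the tower is the candidate fractal $\mathbb F$.

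The conjugacy $\Phi\colon\Lambda\to\mathbb F\boxtimes\Sigma_B^\Z$ is then the composite of all the forward maps. Each of $\phi$ and $\theta$ is a $1$-block code with zero memory and anticipation, so $\Phi$ is itself a $1$-block (zero memory, zero anticipation) sliding block code, as the theorem demands. The induced operation $\star$ is transported across the $\theta$'s: because each $\theta^{-1}$ has memory $1$ and the maps $\phi^{\pm1}$ are memoryless, the operation on the target acquires memory equal to the number of $\theta$-steps and anticipation $0$, which is precisely why $\star$ is a block operation with anticipation $0$ and memory $k$. Markovianity of $\mathbb F\boxtimes\Sigma_B^\Z$ and the fact that $\star$ is an inverse semigroup operation follow because every intermediate shift (a follower-set shift or a $\boxtimes$-product thereof) is a Markovian $1$-block inverse semigroup shift, and these properties are preserved by $\theta$, $\phi$ and $\boxtimes$.

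The main obstacle is the control of this iteration: I must show that, for a Markovian shift, the alternation terminates after finitely many steps, i.e.\ that after peeling off finitely many full-shift factors the remaining shift has \emph{all} $\h^{[n]}$ singleton and is thus genuinely fractal. Finiteness is exactly what guarantees that $B=\prod_i\h_i$ is a single full-shift alphabet, that $\Phi$ is an honest $1$-block code, and, most importantly, that the memory $k$ of $\star$ is finite. I expect to prove termination by exhibiting an invariant that strictly decreases along the alternation whenever the current shift is not yet fractal — the order of the alphabet group does this directly in the finite-alphabet case, while in general one must argue with the descending behaviour of the normal subgroups $\FF_1(\cdot,1)$ and $\Ppp_1(\cdot,1)$ produced at successive levels — and then to identify the stabilized shift with a fractal by verifying $\bar\Lambda^{[n]}\cong\Lambda$ through $\theta$ once $\Ppp_1$ of the identity becomes trivial.
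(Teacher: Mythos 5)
Your overall strategy coincides with the paper's: alternate $\phi$ (Proposition \ref{phi_properties}) and $\theta$ (Proposition \ref{theta_properties}), using the fact that $\hat\Lambda$ satisfies $\FF_1(\hat\Lambda,\h)\cap\Ppp_1(\hat\Lambda,\h)=\{\h\}$ so that $\theta$ applies at each stage, and track the memory of $\star$ through the maps $\theta^{-1}$. All of that matches. The genuine gap is exactly where you flag it: termination. Your proposed mechanism --- ``an invariant that strictly decreases along the alternation'' --- yields termination only when the decrease happens in a well-founded order. The order of the alphabet group does this for finite alphabets, but the theorem concerns countably infinite alphabets, and there the descending behaviour of the normal subgroups $\FF_1(\cdot,1_G)\cap\Ppp_1(\cdot,1_G)$ gives you nothing: an infinite group admits infinite strictly descending chains of normal subgroups (for instance $\Z\supset 2\Z\supset 4\Z\supset\cdots$), so ``strictly decreasing'' does not imply ``eventually trivial''. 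As written, your argument does not close.

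The paper terminates the iteration by a cardinality argument, and this is the missing idea. Since every map in the tower acts as a 1-block code, each letter $a\in\LA$ determines, consistently over all stages $n$, a tuple $(y_n,\dots,y_1)$ of letters of the peeled alphabets, and every such tuple is hit by some letter; this produces an onto map $\alpha:\LA\to \prod_{i\in\N} L_{\Sigma_{\h_i}^\Z}$. If the procedure never reached a fractal shift, then infinitely many of the $\h_i$ would contain at least two elements, making the product uncountable and hence forcing $\LA$ to be uncountable --- contradicting countability of the alphabet. Note also that termination is not detected by the stabilization $\bar\Lambda^{[n]}\cong\bar\Lambda^{[n-1]}$ suggested in your last sentence: that is self-similarity, which is strictly stronger than fractality and need not ever occur. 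What the counting argument delivers is that after finitely many steps every $\h^{[m]}$ of the remaining factor is a singleton, which is precisely Definition \ref{defn_fractal}.
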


\begin{proof}
The proof of this result follows the same outline of Theorem 1 in \cite{kitchens}, that is, starting with $\Lambda$ we shall apply the maps $\phi$ and $\theta$ alternately and recursively, therefore obtaining the product shift
of a shift space $\mathbb{F}$ with a full shift $\Sigma_B^\Z$:

\small{
$$\xymatrix{
{ \Lambda} \ar[r]|-\phi  & { \hat{\Lambda}\boxtimes\h_1^\Z}\ar[dd]|-{\theta\times id}\\\\
                         & { \SgHF\boxtimes\Sigma_{\h_1}^\Z}\ar[rr]|-{\phi\times id}        & & {\SgHFH\boxtimes\Sigma_{\h_2}^\Z\boxtimes\Sigma_{\h_1}^\Z} \ar[dd]|-{\theta\times id\times id} \\\\
                         &                                                               & & {\SgHFHF\boxtimes\Sigma_{\h_2}^\Z\boxtimes\Sigma_{\h_1}^\Z} \ar[rr]|-{\phi\times id\times id} &&
                         {\SgHFHFH\boxtimes\Sigma_{\h_3}^\Z\boxtimes\Sigma_{\h_2}^\Z\boxtimes\Sigma_{\h_1}^\Z }\ar@{-->}[dd]|-{\theta\times id\times id\times id}\\\\
                         &                                                               & &                                                                                           &&  {\mathbb{F}\boxtimes \Sigma_B^\Z}\\
                         }
$$}\normalsize
\vspace{.5cm}

In particular, $\Lambda$ and $\mathbb{F}\boxtimes \Sigma_B^\Z$ are topologically conjugate and we can define an operation $\star$ on $\mathbb{F}\boxtimes \Sigma_B^\Z$ which corresponds to the projection of $\bullet$ on
$\mathbb{F}\boxtimes \Sigma_B^\Z$ (or equivalently, it can be constructed using the operations which are defined in Propositions \ref{group_bar_Lambda} and \ref{phi_properties} at each step of the above procedure). Note
that, since $\theta^{-1}$ is a 2-block code with memory 1 and anticipation 0, $\star$ will have anticipation 0 and will have memory $k$ (where $k$ is less than or equal to the number of times we applied $\theta$ in the procedure).

Thus, we only need to prove that the above procedure will result in $\mathbb{F}$ being a fractal shift after some finite number of steps. Suppose the contrary, that is, suppose that there is not a finite number of steps after which
the previous procedure results in $\mathbb{F}$ being a fractal shift. Let $\{Q_n\}_{n\geq 1}$ be a family of shift spaces, where $Q_n$ is the shift space obtained by applying $n$ times $\phi$ and $\theta$ alternately on
$\Lambda$, that is, $$Q_n:=\mathbb{F}_n\boxtimes\Sigma_{\h_n}^\Z\cdots\boxtimes\cdots\Sigma_{\h_2}^\Z\boxtimes\Sigma_{\h_1}^\Z,$$
where $\mathbb{F}_n$ is some non-fractal shift space. Note that $\Lambda$ is conjugate to $Q_n$ and, since the sequence of maps which take sequences in $\Lambda$ to sequences in $Q_n$ is composed of maps that act as 1-block
codes, we have that any element $a\in\LA$ can be represented as $(f,y_n,y_{n-1},\ldots,y_1)\in L_{Q_n}$. Moreover, any point $(f,y_n,y_{n-1},\ldots,y_1)\in L_{Q_n}$ is the
image of at least one element of $\LA$. In other words, for all $n$ there is an onto map $\alpha_n:\LA\to L_{\Sigma_{\h_n}^\Z}\times\cdots\times L_{\Sigma_{\h_2}^\Z}\times L_{\Sigma_{\h_1}^\Z}$ such that
$$\alpha_n(a)=(y_n,\ldots,y_2,y_1)\Longleftrightarrow a \text{ is represented in }L_{Q_n}\text{ as } (f,y_n,\ldots,y_2,y_1),\text{ for some } f\in \mathbb{F}_n.$$
Now define an onto map $\alpha:\LA\to \prod_{i\in\N} L_{\Sigma_{\h_i}^\Z}$ by setting $\alpha(a)=(y_i)_{i\in\Z}$ such that, for all $n\in\N$, we have $(y_n,\ldots,y_1)=\alpha_n(a)$. If we never reach a fractal shift, then
infinitely many alphabets $L_{\Sigma_{\h_i}^\Z}$ have more than 1 element and, therefore, $\prod_{i\in\N} L_{\Sigma_{\h_i}^\Z}$ is uncountable. This would imply that $\LA$ is also uncountable, a contradiction.

\end{proof}

\subsection{Results for one-sided shift spaces}

We remark that although the previous results in this section were proved just for the case when $\Lambda$ is two-sided, such assumption is not a constraint to characterize one-sided 1-block inverse semigroup shifts. In fact, if
$\Lambda'\subseteq\Sigma_G^\N$ is a one-sided shift space, since we are assuming that $\s(\Lambda')=\Lambda'$, then we can always take its inverse limit, which will be a two-sided shift space $\Lambda\subseteq\Sigma_G^\Z$ such that
$\pi(\Lambda)=\Lambda'$ (see \cite[Remark 2.6]{GoncalvesSobottkaStarling2015_2}). Thus, we can use \cite[Proposition 4.6]{GoncalvesSobottkaStarling2015_2} to characterize $\Lambda'$ by characterizing its inverse limit.

On the other hand, we remark that we have only considered two-sided shift spaces because higher block codes are in general non-invertible for one-sided shifts, see Corollary 3.22 in \cite{GoncalvesSobottkaStarling2015}, and because for one-sided shifts the map $\theta$  is also non-invertible.
However, since both the higher block code and the map $\theta$ fail to be invertible just on the finite sequences of one-sided shift spaces, if we consider the set $A^\NZ$ with the product topology, instead of considering it with the topologies proposed in \cite{Ott_et_Al2014} and \cite{GoncalvesSobottkaStarling2015_2}, then it is possible to use the results of Section \ref{Isomorphism of two-sided Markovian} to prove a generalization of Kitchens' result: Any $M$-step subshift $\Lambda\subset A^\NZ$ with a 1-block group operation induced from a group operation on $A$ is topologically conjugate to the Cartesian product of a full shift with a fractal shift.

\section*{Acknowledgments}

\noindent D. Gon\c{c}alves was partially supported by CNPq and Capes project PVE085/2012.

\noindent M. Sobottka was supported by CNPq-Brazil grants 304813/2012-5, 480314/2013-6 and 308575/2015-6. Part of this work was carried out while the author was postdoctoral fellow of CAPES-Brazil at Center for Mathematical Modeling, University of Chile.

\noindent C. Starling was supported by CNPq, and work on this paper occured while the author held a postdoctoral fellowship at UFSC.

%====================================================== BIBLIOGRAFIA =================================================================


\begin{thebibliography}{20}

%\bibitem{adler} {\sc Adler, R.~L.} {\sc and}  {\sc Marcus, B.} (1979).
%\newblock ``Topological entropy and
%equivalence of dynamical systems'',
%\newblock Memoirs of Amer. Math. Soc.,~\textbf{219}.


%\bibitem{Ceccherini-Silberstein--Coornaert}
%{\sc Ceccherini-Silberstein, T.} {\sc and} {\sc Coornaert, M.} (2010).
%\newblock ``Cellular automata and groups Springer Monographs in Mathematics'',
%\newblock {\em Springer Verlag.}


\bibitem{Clifford1967} {\sc Clifford, A.~H.} {\sc and}  {\sc Preston, G.~B.} (1967).
\newblock ``The algebraic theory of semigroups II'',
\newblock Mathematical Surveys and Monographs,~\textbf{7}.


%\bibitem{denes}
%{\sc D\'{e}nes, J.} {\sc and} {\sc Keedwell A.~D.} (1974).
%\newblock ``Latin Squares and
%Their Applications",
%\newblock   New York-London, Academic Press.

%\bibitem{Fiebig2001}
%{\sc Fiebig, D.} (2001).
%\newblock {\em  Factor maps, entropy and fiber cardinality for Markov shifts},
%\newblock  Rocky Mountain Journal of Mathematics,~\textbf{31},~3, 955--986.

%\bibitem{Fiebig2003}
%{\sc Fiebig, D.} (2003).
%\newblock {\em  Graphs with pre-assigned Salama entropies and optimal degress},
%\newblock  Ergodic Theory and Dynamical Systems, ~\textbf{23}, 1093--1124.

%\bibitem{FiebigFiebig1995}
%{\sc Fiebig, D.} {\sc and} {\sc Fiebig, U.-R.} (1995).
%\newblock {\em Topological Boundaries for Countable State Markov Shifts},
%\newblock  Proc. London Math. Soc., ~\textbf{s3-70},~3, 625--643.


%\bibitem{FiebigFiebig2005}
%{\sc Fiebig, D.} {\sc and} {\sc Fiebig, U.-R.} (2005).
%\newblock {\em Embedding theorems for locally compact Markov shifts},
%\newblock  Ergodic Theory and Dynamical Systems, ~\textbf{25}, 107--131.


%\bibitem{GR}
%{\sc Gon\c{c}alves, D.} {\sc and} {\sc Royer, D..}  (2015).
%\newblock  {\em (M + 1)-step shift spaces that are not conjugate to M-step shift spaces}, to appear at Bulletin des Sciences %Math\'{e}matiques.

\bibitem{GoncalvesSobottkaStarling2015}
{\sc Gon\c{c}alves, D., Sobottka, M.} {\sc and} {\sc Starling, C.}  (2016).
\newblock  {\em Sliding block codes between shift spaces over infinite alphabets},
\newblock  Math. Nachr., \textbf{289}, 17-18, 2178--2191.

\bibitem{GoncalvesSobottkaStarling2015_2}
{\sc Gon\c{c}alves, D., Sobottka, M.} {\sc and} {\sc Starling, C.}  (2017).
\newblock  {\em Two-sided shift spaces over infinite alphabets},
\newblock  accepted in Journal of the Australian Mathematical Society.


\bibitem{kitchens}
{\sc Kitchens, B.~P.} (1987).
\newblock {\em Expansive dynamics on zero-dimensional groups},
\newblock  Ergodic Theory and Dynamical Systems, ~\textbf{7},~2, 249--261.

\bibitem{La98}
{\sc M.V. Lawson.} (1998).
\newblock {\em Inverse Semigroups: The Theory of Partial Symmetries}.
\newblock World Scientific.

\bibitem{LindMarcus}
{\sc Lind, D.~A.} {\sc and} {\sc Marcus, B.} (1995).
\newblock ``An introduction to symbolic dynamics and coding'',
\newblock {\em Cambridge, Cambridge University Press.}

%\bibitem{pont}
%{\sc Pontrjagin, L.} (1946).
%\newblock ``Topological Groups",
%\newblock  Princeton
%University Press.

\bibitem{marcus}
{\sc Sindhushayana, N.~T.}, {\sc Marcus, B.} {\sc and} {\sc Trott, M.} (1997).
\newblock  {\em Homogeneous shifts},  IMA J. Math. Control Inform.,
~\textbf{14}, ~3, 255--287

\bibitem{Ott_et_Al2014}
{\sc Ott, W.}, {\sc Tomforde, M.} {\sc and} {\sc WILLIS,
P.~N.} (2014).
\newblock  {\em One-sided shift spaces over infinite alphabets},
\newblock New York Journal of Mathematics. NYJM Monographs 5. State University of New York, University at Albany, Albany, NY. 54 pp.


\bibitem{sobottka2007}
{\sc Sobottka, M.} (2007)
\newblock {\em Topological quasi-group shifts},
\newblock Disc. and Contin. Dyn. Syst.,~\textbf{17}, 77--93.

\bibitem{SG}
{\sc Sobottka, M.} {\sc and} {\sc Gon\c{c}alves, D.}  (2017).
\newblock  {\em A note on the definition of sliding block codes and the Curtis-Hedlund-Lyndon Theorem},
\newblock accepted in Journal of Cellular Automata.

%\bibitem{walters}
%{\sc Walters, P.} (1990).
%\newblock ``An Introduction to Ergodic Theory",
%\newblock  New York, Springer-Verlag.


%\bibitem{wil}
%{\sc Williams, R.~F.} (1973).
%\newblock {\em Classification of subshifts of finite type},
%\newblock  Ann. of Math., ~\textbf{98}, 120--153. Errata:  Ann. of Math., ~\textbf{99},
%380--381.

\end{thebibliography}
\end{document}